\newcommand{\ClaimN}[2]{\noindent\textbf{Claim #1 }\textit{#2}\\}
\newcommand{\ClaimNN}[2]{\noindent\textbf{Claim #1 }\textit{#2}}
\newtheorem{theorem}{Theorem}[section]
\newtheorem{proposition}[theorem]{Proposition}
\newtheorem{corollary}[theorem]{Corollary}
\newtheorem{lemma}[theorem]{Lemma}
\theoremstyle{definition}
\newtheorem{definition}[theorem]{Definition}
\newcommand{\D}{\mathcal{D}}
\newcommand{\DV}{\mathcal{D}_\mathcal{V}}
\newcommand{\DW}{\mathcal{D}_\mathcal{W}}
\newcommand{\DVW}{\mathcal{D}_{\mathcal{VW}}}
\newcommand{\DVPWP}{\mathcal{D}_{\mathcal{V'W'}}}
\newcommand{\DVWi}{\mathcal{D}_{\mathcal{V}_i\mathcal{W}_i}}
\newenvironment{proofN}[1]{\noindent\textit{Proof of #1.}}{\hfill$\square$\\}
\newcommand{\Case}[1]{\textbf{Case #1.}}
\newcommand{\Step}[1]{\textbf{Step #1.}}
\newcommand{\V}{\mathcal{V}}
\newcommand{\W}{\mathcal{W}}
\begin{document}

\title[Generalized Heegaard splittings and mapping classes]{On the disk complexes of weakly reducible, unstabilized Heegaard splittings of genus three III - Generalized Heegaard splittings and mapping classes}

\author{Jungsoo Kim}
\date{September 1, 2015}

\begin{abstract}
	Let $M$ be an orientable, irreducible $3$-manifold admitting a weakly reducible genus three Heegaard splitting as a minimal genus Heegaard splitting.
	In this article, we prove that if  $[f]$, $[g]\in Mod(M)$ give the same correspondence between two isotopy classes of generalized Heegaard splittings consisting of two Heegaard splittings of genus two, say $[\mathbf{H}]\to[\mathbf{H}']$,  then there exists a representative $h$ of the difference $[h]=[g]\cdot[f]^{-1}$ such that (i) $h$ preserves a suitably chosen embedding of the Heegaard surface $F'$ obtained by amalgamation from $\mathbf{H}'$ which is a representative of $[\mathbf{H}']$ and (ii) $h$ sends a uniquely determined weak reducing pair $(V',W')$ of $F'$ into itself up to isotopy.
Moreover, for every orientation-preserving automorphism $\tilde{h}$ satisfying the previous conditions (i) and (ii), there exist two elements of $Mod(M)$ giving correspondence $[\mathbf{H}]\to[\mathbf{H}']$ such that $\tilde{h}$ belongs to the isotopy class of the difference between them. 
\end{abstract}

\address{\parbox{4in}{
	BK21 PLUS SNU Mathematical Sciences Division,\\ Seoul National University\\ 
	1 Gwanak-ro, Gwanak-Gu, Seoul 08826, Korea\\
}} 
	
\email{pibonazi@gmail.com}
\subjclass[2000]{57M50}

\maketitle
\section{Introduction and Result}
Throughout this paper, all surfaces and 3-manifolds will be taken to be compact, orientable and piecewise-linear.

Let $M$ be an orientable, irreducible $3$-manifold admitting a weakly reducible genus three Heegaard splitting as a minimal genus Heegaard splitting.

Let us consider an element $[f]$ of the group of isotopy classes of orientation-preserving automorphisms of $M$, say $Mod(M)$, and an automorphism $f$ in the isotopy class $[f]$.
Let $[F]$ be the isotopy class of a properly embedded (possibly disconnected) surface $F$ in $M$.
Since we can well-define the image $[f]([F])$ as $[f(F)]$ for an isotopy class $[F]$ and an element $[f]\in Mod(M)$,  if there is a correspondence $[f]([F])=[F']$ between two isotopy classes $[F]$ and $[F']$, then it would contain some information of $[f]$ even though it does not contain all information of $[f]$.
But if $F$ does not divide $M$ into sufficiently small pieces, then one can expect that the correspondence contains not much information and if the genus of $F$ is large, then it would be hard to even just find a correspondence.

Since $M$ admits a weakly reducible Heegaard splitting of genus three, we can get the generalized Heegaard splitting obtained by ``\textit{weak reduction}'', where it consists of two non-trivial Heegaard splittings of genus two.
Conversely, if there is a generalized Heegaard splitting of $M$ consisting of two non-trivial Heegaard splittings of genus two, then the ``\textit{amalgamation}'' is a weakly reducible, genus three Heegaard splitting of $M$.
Hence, we can make use of the correspondences between sets of surfaces in $M$ of genera at most two instead of surfaces of genus three or more.
Since there have been many results about genus two Heegaard splittings, this approach would make sense.

But the question is, how much information of elements of $Mod(F)$ could be contained in a correspondence between two isotopy classes of generalized Heegaard splittings consisting of two Heegaard splittings of genus two?
For $[f],[g]\in Mod(M)$ and a generalized Heegaard splitting $\mathbf{H}$ consisting of two Heegaard splittings of genus two, assume that  $[f]([\mathbf{H}])=[g]([\mathbf{H}])$.
Even though the set of surfaces $f(\mathbf{H})$ is isotopic to $g(\mathbf{H})$, we cannot guarantee that $[f]=[g]$ in $Mod(M)$, i.e. the difference $[h]=[g]\cdot[f]^{-1}$ might not be the identity in $Mod(M)$.
Since two generalized Heegaard splittings $f(\mathbf{H})$ and $g(\mathbf{H})$ are isotopic, we could expect that the amalgamations of them are also isotopic.
Hence, there comes a natural expectation that there would be a representative $h$ of the difference $[h]$ such that $h$ preserves an embedding $F'$ of the amalgamation obtained from $f(\mathbf{H})$.
Hence, there would be the corresponding subset or subgroup of $Mod(M,F')$ containing such representatives of $[h]$ ($Mod(M,F')$ is the group of isotopy classes of orientation-preserving automorphisms of $M$ preserving $F'$) and this subset or subgroup would tell us how much information the correspondence loses for such elements of $Mod(M)$.

First, we will show that ``\textit{whether or not $[f]$ gives a correspondence between two weakly reducible, unstabilized Heegaard surfaces of genus three}'' can be interpreted as ``\textit{whether or not there exists a correspondence between two generalized Heegaard splittings obtained by weak reductions from them by $[f]$}''  in Theorem \ref{theorem-GHS-HS}.
This gives an important motivation to understand $[f]$ as a correspondence between two generalized Heegaard splittings instead of two Heegaard splittings of genus three.

\begin{theorem}[Corollary \ref{corollary-GHS-HS}]\label{theorem-GHS-HS}
Let $(\V,\W;F)$ and $(\V',\W';F')$ be weakly reducible, unstabilized, genus three Heegaard splittings in an orientable, irreducible $3$-manifold $M$ and $f$ an orientation-preserving automorphism of $M$.
Then $f$ sends $F$ into $F'$ up to isotopy if and only if $f$ sends a generalized Heegaard splitting obtained by weak reduction from $(\V,\W;F)$ into a generalized Heegaard splitting obtained by weak reduction from $(\V',\W';F')$ up to isotopy.
\end{theorem}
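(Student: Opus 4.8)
The plan is to reduce the statement to two structural facts about weak reduction and amalgamation, so that the only real content is bookkeeping with side-labels and isotopy classes. The first fact is the naturality of weak reduction: an orientation-preserving homeomorphism $\phi\colon M\to M$ carries the Heegaard splitting $(\V,\W;F)$ to $(\phi(\V),\phi(\W);\phi(F))$, hence carries any weak reducing pair of $(\V,\W;F)$ to a weak reducing pair of the image splitting, and therefore carries any generalized Heegaard splitting obtained by weak reduction from $(\V,\W;F)$ to one obtained by weak reduction from the image; moreover the collection of generalized Heegaard splittings obtained by weak reduction is insensitive to interchanging the two compression bodies $\V$ and $\W$ (a weak reducing pair $(D,E)$ becomes $(E,D)$, producing the same generalized Heegaard splitting up to relabeling the levels). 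The second fact is that the amalgamation of any generalized Heegaard splitting obtained by weak reduction from $(\V,\W;F)$ is isotopic to $F$, that amalgamation is natural with respect to homeomorphisms of $M$, and that it sends isotopic generalized Heegaard splittings to isotopic Heegaard surfaces. Both facts are classical (Scharlemann--Thompson, Schultens), and in the weakly reducible, unstabilized genus-three situation the shape of the weak reductions — that they are the generalized Heegaard splittings consisting of two genus-two Heegaard splittings, and that they exist — is exactly what the earlier papers in this series supply.

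For the forward direction I would first replace $f$ by an isotopic automorphism with $f(F)=F'$ on the nose, which is possible because $f(F)$ is isotopic to $F'$ and isotopies of properly embedded surfaces in a compact $3$-manifold extend to ambient isotopies. Since $\V$ and $\W$ are the two handlebodies bounded by $F$, after this adjustment $f$ carries $\{\V,\W\}$ onto $\{\V',\W'\}$, possibly swapping the two. Now choose any generalized Heegaard splitting $\mathbf{H}$ obtained by weak reduction from $(\V,\W;F)$; such exists since the splitting is weakly reducible. By naturality, $f(\mathbf{H})$ is a generalized Heegaard splitting obtained by weak reduction from $(f(\V),f(\W);F')$, which is $(\V',\W';F')$ up to interchanging the two compression bodies; by the symmetry clause of the first fact, $f(\mathbf{H})$ is therefore a generalized Heegaard splitting obtained by weak reduction from $(\V',\W';F')$. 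Hence $f$ sends $\mathbf{H}$ to such a splitting up to isotopy, which is the desired conclusion.

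For the converse I would argue through amalgamation. Suppose $\mathbf{H}$ is obtained by weak reduction from $(\V,\W;F)$ and $\mathbf{H}'$ by weak reduction from $(\V',\W';F')$, with $f(\mathbf{H})$ isotopic to $\mathbf{H}'$. By the second fact the amalgamation of $\mathbf{H}$ is isotopic to $F$ and the amalgamation of $\mathbf{H}'$ is isotopic to $F'$. Applying $f$ and using naturality of amalgamation, $f(F)$ is isotopic to the amalgamation of $f(\mathbf{H})$; since $f(\mathbf{H})$ is isotopic to $\mathbf{H}'$ and amalgamation respects isotopy, the amalgamation of $f(\mathbf{H})$ is isotopic to the amalgamation of $\mathbf{H}'$, hence to $F'$. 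Therefore $f(F)$ is isotopic to $F'$, completing the equivalence.

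The point that requires care — and the reason this is a corollary of the earlier results rather than a formality — is the first fact in exactly the form needed: for an unstabilized, weakly reducible genus-three splitting one must know that the weak reductions are well-understood and exist, and that, at the level of isotopy classes of the resulting generalized Heegaard splittings, the construction is genuinely unchanged under the $\V\leftrightarrow\W$ swap. The statement that amalgamation recovers $F$ is standard in the abstract, but reconciling it with the particular normal form for weak reductions used throughout this series is where the bulk of the verification lies; once that compatibility is in hand, the two implications above are purely formal.
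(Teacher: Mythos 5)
Your proposal is correct in outline, and the forward direction is essentially the paper's argument (isotope $f$ so that $f(F)=F'$, then check that $f$ carries the weak-reduction data of $(\V,\W;F)$ to weak-reduction data of $(\V',\W';F')$; the paper does exactly this in Claim A of Corollary \ref{corollary-GHS-HS}, handling the possible $\V\leftrightarrow\W$ swap by a ``without loss of generality''). Your converse, however, takes a genuinely different route. The paper first replaces $\mathbf{H}$ and $\mathbf{H}'$ by the GHSs obtained from the \emph{centers} of the components of $\DVW(F)$ and $\DVPWP(F')$ (using Lemma \ref{lemma-2-17} and Theorem \ref{lemma-just-BB} to see these are isotopic to the originals), and then invokes Theorem \ref{lemma-determine-GHSs}, a long constructive argument with spines, Waldhausen's lemmas and ``untying isotopies'' that rebuilds the compression body $\V'$ inside $M$ and produces an isotopy of $f$ carrying $F$ to $F'$ \emph{while keeping $f(\mathbf{H})=\mathbf{H}'$ at every time}. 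You instead route through amalgamation: $f(F)$ is an amalgamation of $f(\mathbf{H})$, isotopic GHSs have isotopic amalgamations (Proposition \ref{lemma-Lackenby} plus the $1$-parameter-family argument the paper only records later, in the definition of $\widetilde{\mathcal{GHS}}_{[F]}$), and amalgamation inverts weak reduction, so $f(F)\simeq F'$. For the statement as quoted this is legitimate and considerably shorter; the one load-bearing step you must actually verify is that the amalgamation of Definition \ref{def-amalgamation}, applied to the normal form of Lemma \ref{lemma-four-GHSs} (where the cocore of the unique $1$-handle of $\W_1$ is $\tilde V$ only when the pair is a center --- so either reduce to centers first, or use Lemma \ref{lemma-2-17} to compare with the center's GHS), really returns $F$ up to isotopy; you flag this yourself, and it is the analogue of what the paper checks in Lemma \ref{lemma-GHS-GHSS} and Theorem \ref{theorem-main-copy-2} in the opposite composition order. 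What your shortcut does not deliver is the stronger, controlled conclusion of Theorem \ref{lemma-determine-GHSs} (an isotopy of $f$ to $f(F)=F'$ preserving $f(\mathbf{H})=\mathbf{H}'$ throughout, hence tracking the distinguished weak reducing pair), which is what the Main Theorem in Section \ref{section4} actually consumes --- so your argument proves the corollary but could not replace the paper's machinery. (Minor slip: $\V$ and $\W$ are compression bodies, not necessarily handlebodies, since $M$ may have boundary.)
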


Let $\widetilde{\mathcal{GHS}}$ be the set of isotopy classes of the generalized Heegaard splittings consisting of two non-trivial Heegaard splittings of genus two  and $\widetilde{\mathcal{GHS}}_{[F]}$ the maximal subset of $\widetilde{\mathcal{GHS}}$ such that every element of $\widetilde{\mathcal{GHS}}_{[F]}$  gives the same isotopy class $[F]$ of amalgamation.

Next, we will prove Theorem \ref{theorem-main-2} which is the main theorem in this article.

\begin{theorem}[Theorem \ref{theorem-main-copy-2}, the Main Theorem] \label{theorem-main-2}
Let $M$ be an orientable, irreducible $3$-manifold having a weakly reducible,  genus three Heegaard splitting as a minimal genus Heegaard splitting.

Suppose that there is a correspondence between (possibly duplicated) two isotopy classes of $\widetilde{\mathcal{GHS}}$ by some elements of $Mod(M)$, say $[\mathbf{H}]\in \widetilde{\mathcal{GHS}}_{[F]}\rightarrow [\mathbf{H}']\in \widetilde{\mathcal{GHS}}_{[F']}$.
If $[f]$, $[g]\in Mod(M)$ give the same correspondence,  then there exists a representative $h$ of the difference $[h]=[g]\cdot[f]^{-1}$ satisfying the follows.

For a suitably chosen representative $F'\in[F']$,
\begin{enumerate}
\item $h$ takes $F'$ into itself and\label{thm12-1}
\item $h$ sends a uniquely determined weak reducing pair $(V',W')$ of $F'$ into itself up to isotopy (i.e. $h(V')$ is isotopic to $V'$ or $W'$ in the relevant compression body and $h(W')$ is isotopic to the other in the relevant compression body),
where $(V',W')$ is determined naturally when we obtain $F'$ by amalgamation from a representative $\mathbf{H}'$ of $[\mathbf{H}']$.\label{thm12-2} 
\end{enumerate}
Moreover, for any orientation-preserving automorphism $\tilde{h}$ of $M$ satisfying (\ref{thm12-1}) and (\ref{thm12-2}), there exist two elements in $Mod(M)$ giving the correspondence $[\mathbf{H}]\to[\mathbf{H}']$ such that $\tilde{h}$ belongs to the isotopy class corresponding to the difference between them.
\end{theorem}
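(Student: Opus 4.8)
The plan is to prove both halves of the statement by isolating a single object: an orientation‑preserving automorphism whose mapping class fixes the isotopy class $[\mathbf{H}']$. For the first half, set $[h]=[g]\cdot[f]^{-1}$. Since $[f]([\mathbf{H}])=[\mathbf{H}']=[g]([\mathbf{H}])$, we get $[h]([\mathbf{H}'])=[g]\bigl([f]^{-1}([\mathbf{H}'])\bigr)=[g]([\mathbf{H}])=[\mathbf{H}']$, so $[h]$ fixes $[\mathbf{H}']$; it therefore suffices to show that \emph{every} mapping class fixing $[\mathbf{H}']$ has a representative $h$ satisfying (\ref{thm12-1}) and (\ref{thm12-2}). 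For the ``Moreover'' half I will first establish the \emph{converse}: any orientation‑preserving $\tilde h$ satisfying (\ref{thm12-1}) and (\ref{thm12-2}) has $[\tilde h]([\mathbf{H}'])=[\mathbf{H}']$. Granting this, fix any $[f_{0}]\in Mod(M)$ realizing the correspondence $[\mathbf{H}]\to[\mathbf{H}']$ (one exists by hypothesis), put $[f]:=[f_{0}]$ and $[g]:=[\tilde h]\cdot[f_{0}]$ (note $[\tilde h]\in Mod(M)$ since $\tilde h$ is orientation‑preserving). Then $[g]([\mathbf{H}])=[\tilde h]([\mathbf{H}'])=[\mathbf{H}']$, so $[g]$ also realizes the correspondence, while $[g]\cdot[f]^{-1}=[\tilde h]$, which is exactly what is required. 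Thus everything reduces to two implications about $[\mathbf{H}']$‑fixing classes.

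First implication (existence of $h$). Let $F'$ be the amalgamation of a chosen representative $\mathbf{H}'$ of $[\mathbf{H}']$, built in a regular neighborhood of $\mathbf{H}'$ together with a system of spanning arcs, and let $(V',W')$ be the weak reducing pair of the resulting genus‑three splitting $(\V',\W';F')$ produced by this amalgamation, i.e.\ the unordered pair of meridian disks along which the weak reduction of $F'$ returns $\mathbf{H}'$ up to isotopy; since the two genus‑two pieces of $\mathbf{H}'$ are non‑trivial, $(\V',\W';F')$ is weakly reducible, unstabilized and of genus three. By the naturality of amalgamation (equivalently, by Theorem \ref{theorem-GHS-HS} applied with both splittings equal to $(\V',\W';F')$), a mapping class fixing $[\mathbf{H}']$ fixes $[F']$, so by the isotopy extension theorem we may pick a representative $h$ of it with $h(F')=F'$; this is (\ref{thm12-1}). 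Now $\{h(V'),h(W')\}$ is again a weak reducing pair of $F'$, and because $h$ carries the triple $(M,F',\{V',W'\})$ homeomorphically onto $(M,F',\{h(V'),h(W')\})$, the weak reduction of $F'$ along $\{h(V'),h(W')\}$ is $h(\mathbf{H}')$, which is isotopic to $\mathbf{H}'$. Invoking the classification of the weak reducing pairs of a weakly reducible, unstabilized, genus‑three Heegaard splitting from the preceding two papers of this series --- which forces $(V',W')$ to be the \emph{unique}, up to the order of its two disks, weak reducing pair of $F'$ whose weak reduction lies in $[\mathbf{H}']$ --- we conclude $\{[h(V')],[h(W')]\}=\{[V'],[W']\}$, which is (\ref{thm12-2}). (If $h$ interchanges $\V'$ and $\W'$ this simply produces the swapped alternative in (\ref{thm12-2}), consistently, because reversing the order of the two genus‑two splittings of $\mathbf{H}'$ yields an isotopic element of $\widetilde{\mathcal{GHS}}$.)

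Second implication (the converse needed for ``Moreover''). Suppose $\tilde h$ is orientation‑preserving and satisfies (\ref{thm12-1}) and (\ref{thm12-2}) for $F'$ and $(V',W')$ as above. Then $\{[\tilde h(V')],[\tilde h(W')]\}=\{[V'],[W']\}$, so the weak reduction of $\tilde h(F')=F'$ along $\{\tilde h(V'),\tilde h(W')\}$ is isotopic to the weak reduction of $F'$ along $\{V',W'\}$, namely $\mathbf{H}'$; but it is also $\tilde h$ applied to the latter, i.e.\ isotopic to $\tilde h(\mathbf{H}')$. Hence $[\tilde h]([\mathbf{H}'])=[\mathbf{H}']$, which, together with the reduction of the first paragraph, finishes the ``Moreover'' half.

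The routine ingredients above --- choosing and tracking the spanning arcs of the amalgamation, the isotopy‑extension step realizing $h(F')=F'$, equivariance of weak reduction under homeomorphisms, and the bookkeeping of the side‑swapping case --- are all standard. The one genuinely substantive input, and the step I expect to be the main obstacle, is the uniqueness of the natural weak reducing pair $(V',W')$: for a general generalized Heegaard splitting the weak reduction representing a prescribed isotopy class need not be carried by a single weak reducing pair of the amalgamation, and it is precisely the disk‑complex analysis of weakly reducible, unstabilized genus‑three splittings from the earlier installments that excludes this here. Everything else is assembly around that fact.
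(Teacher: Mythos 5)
Your overall architecture is sound and close to the paper's: reducing both halves to the claim that the stabilizer of $[\mathbf{H}']$ in $Mod(M)$ consists exactly of the classes with a representative satisfying (\ref{thm12-1}) and (\ref{thm12-2}) is a valid reformulation, and your converse direction (an $\tilde h$ preserving $F'$ and $(V',W')$ fixes $[\mathbf{H}']$, whence $[g]:=[\tilde h]\cdot[f_0]$ works) is essentially the paper's argument. The gap is in the forward direction, at exactly the point you flag as ``the main obstacle,'' and the uniqueness statement you invoke there is false. There is no uniqueness of the weak reducing pair of $F'$ whose weak reduction lies in $[\mathbf{H}']$: by Lemma \ref{lemma-2-17} every one of the infinitely many hands of the relevant building block yields a weak reduction isotopic to $\mathbf{H}'$, and, worse, by Theorem \ref{theorem-structure} several distinct components of $\DVPWP(F')$ (each with its own center) can realize the same isotopy class $[\mathbf{H}']$, differing in how the thick levels embed in $\V'$ and $\W'$. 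Consequently, from ``$h(F')=F'$ and $h(\mathbf{H}')$ is isotopic to $\mathbf{H}'$ in $M$'' you cannot conclude $\{[h(V')],[h(W')]\}=\{[V'],[W']\}$: the ambient isotopy carrying $h(\mathbf{H}')$ to $\mathbf{H}'$ need not preserve $F'$, so it need not preserve the component of $\DVPWP(F')$, and isotopy in $M$ is strictly coarser than equivalence in $\overline{\mathcal{GHS}}_{F'}$.

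What closes this gap in the paper is Theorem \ref{lemma-determine-GHSs}, the technical core of Section \ref{section3} proved by the spine-untying isotopies: an automorphism sending one weak-reduction GHS to another can be isotoped so that it carries the Heegaard surface onto the Heegaard surface \emph{and} the GHS onto the GHS simultaneously. Applying this to $f$ and $g$ gives $f(\mathbf{H})=\mathbf{H}'$ and $g(\mathbf{H})=\mathbf{H}'$ on the nose, so the weak reducing pair $\{f^{-1}(V'),f^{-1}(W')\}$ and the pair $(V,W)$ produce GHSs whose thick levels agree inside the compression bodies, i.e.\ the same element of $\overline{\mathcal{GHS}}_F$; the bijection $\Phi_F$ then places them in the same component $\mathcal{B}$, and Lemma \ref{lemma-center-center} (centers go to centers) forces $\{[f^{-1}(V')],[f^{-1}(W')]\}=\{[V],[W]\}$, and similarly for $g$. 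The uniqueness of $(V',W')$ is then only the uniqueness of the center of the single component $\mathcal{B}'=\Phi_{F'}^{-1}((\mathbf{H}'))$, not uniqueness among all pairs whose weak reduction realizes $[\mathbf{H}']$. Without Theorem \ref{lemma-determine-GHSs}, or an equivalent device for upgrading ``isotopic in $M$'' to ``equivalent in $\overline{\mathcal{GHS}}_{F'}$,'' your deduction of (\ref{thm12-2}) does not go through.
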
 

Hence, the Main Theorem means that the difference between such two elements of $Mod(M)$ comes from the subgroup of $Mod(M,F')$ consisting of  elements preserving the weak reducing pair $(V',W')$, say $Mod(M,F',(V',W'))$.

\section{Preliminaries\label{section2}}

This section introduces basic notations and summarizes the author's results in \cite{JungsooKim2013} \cite{JungsooKim2012} \cite{JungsooKim2014} \cite{JungsooKim2014-2}.

\begin{definition}\label{def-isotopy}
Let $M$ be a manifold.
An \textit{ambient isotopy} taking $N$ into $N'$ is a family of maps $h_t:M\to M$, $t\in I$ such that the associated map $H:M\times I \to M$ given by $H(x,t)=h_t(x)$ is continuous, $h_0$ is the identity, $h_1(N)=N'$, and  $h_t$ is a homeomorphism from $M$ to itself at any time $0\leq t \leq 1$.

In this article, we just say $N$ is \textit{isotopic} to $N'$ in $M$ by an isotopy $h_t$ if there is an ambient isotopy $h_t$ taking $N$ into $N'$. 

An \textit{isotopy} between two homeomorphisms $f, g : X \rightarrow Y$ for two manifolds $X$ and $Y$ is a family of maps $f_t: X\to Y$, $t\in I$ such that the associated map $F:X\times I\to Y$ given by $F(x,t)=f_t(x)$ is continuous, $f_0=f$, $f_1=g$, and $f_t$ is a homeomorphism at any time $0\leq t \leq 1$.

Let $f: X\to Y$  be a homeomorphism such that $f(N)=N_1$ for a submanifold $N\subset X$.
If there is an isotopy $f_t$ such that $f_0=f$ and $f_1(N)=N_2$, then we say that ``\textit{we can isotope $f$ so that $f(N)=N_2$}''.
For example, if $N_1 ~(=f(N))$ itself is isotopic to $N_2$ by an isotopy $h_t$ in $Y$, then we can isotope $f$ so that $f(N)=N_2$ by taking the isotopy $f_t=h_t\circ f$. 
If we can isotope $f$ so that $f(N)=N'$, then we say that ``\textit{$f$ takes (or sends) $N$ into $N'$ up to isotopy}''.
If a homeomorphism $f$ is isotopic to $g$, then we say that $f$ and $g$ belong to the same \textit{isotopy class}, where we will denote the isotopy class of a homeomorphism $f$ as $[f]$.
If we assume $X=Y=M$, then there is the set of isotopy classes of orientation-preserving automorphisms of $M$, say $Mod(M)$.
Then we can well-define the operation $[f]\cdot [g]$ as $[f\circ g]$ and this gives a group structure on $Mod(M)$ with the identity $[\operatorname{id}_M]$ and the inverse $[f]^{-1}=[f^{-1}]$.

Suppose that $f$ is an orientation-preserving automorphism of $M$.
If a submanifold $F_1$ is isotopic to $F_2$ in $M$, i.e. $h_0(F_1)=F_1$ and $h_1(F_1)=F_2$ by an isotopy $h_t$ for $0\leq t \leq 1$, then the image $f(F_1)$ is isotopic to $f(F_2)$ by the isotopy $f\circ{h_t}\circ f^{-1}$ for $0\leq t \leq 1$.
Moreover, if $f$ is isotopic to $f'$ by an isotopy $f_t$ for $0\leq t \leq 1$ for two representatives $f$ and $f'$ of $[f]$, then the isotopy $f_t\circ f^{-1}$ for $0\leq t \leq 1$ sends $f(F)$ into $f'(F)$.
This means that we can well-define the image $[f]([F])$ as $[f(F)]$ for an isotopy class $[F]$ and an element $[f]\in Mod(M)$.
\end{definition}

\begin{definition}\label{def-compression}
A  \textit{compression body} is a $3$-manifold which can be obtained by starting with some closed, orientable, connected surface $F$, forming the product $F\times I$, attaching some number of $2$-handles to $F\times\{1\}$ and capping off all  resulting $2$-sphere boundary components that are not contained in $F\times\{0\}$ with $3$-balls. 
The boundary component $F\times\{0\}$ is referred to as $\partial_+$. 
The rest of the boundary is referred to as $\partial_-$.
If a compression body $\V$ is homeomorphic to $\partial_+\V \times I$, then we call it \textit{trivial} and otherwise we call it \textit{nontrivial}. 
The cores of the $2$-handles defining a compression body $\V$, extended vertically down through $\partial_+\V\times I$, are called a \textit{defining set} of $2$-disks for $\V$.
A defining set for $\V$ is \textit{minimal} if it properly contains no other defining set.

Note that we can define a compression body $\V$ with non-empty minus boundary as a connected $3$-manifold obtained from $F\times I$ for a (possibly disconnected) closed surface $F$ such that each component of $F$ is of genus at least one, followed by $1$-handles attached to $F\times \{1\}$, where $F\times\{0\}$ becomes $\partial_-\V$ and the other boundary of $\V$ becomes $\partial_+\V$.
\end{definition}

\begin{lemma}\label{lemma-defining}
A genus $g\geq 2$ compression body $\V$ with minus boundary having a genus $g-1$ component has a unique minimal defining set up to isotopy and it consists of only one disk.
\end{lemma}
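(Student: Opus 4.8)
\textit{Proof proposal for Lemma \ref{lemma-defining}.}

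The plan is to proceed by a Euler characteristic / rank count to pin down the number of disks in a minimal defining set, and then use an innermost-disk cut-and-paste argument to establish uniqueness up to isotopy. First I would observe that if $\V$ has genus $g$, then $\partial_+\V$ has genus $g$, while by hypothesis $\partial_-\V$ has a component of genus $g-1$ (and, since $\chi(\partial_+\V)=\chi(\partial_-\V)-2(\#\text{disks in a minimal defining set})$ — equivalently, counting $2$-handles in the construction of Definition \ref{def-compression} — the remaining components of $\partial_-\V$, if any, would force the count up). A minimal defining set of $k$ disks for $\V$ satisfies $2-2g=\chi(\partial_+\V)=\chi(\partial_-\V)-2k$; since $\partial_-\V$ contains a genus $g-1$ surface contributing $\chi=4-2g$, and every other component of $\partial_-\V$ has genus $\geq 1$ (so contributes $\leq 0$ to $\chi$) while $k\geq 1$ (as $\V$ is nontrivial), the only solution is $k=1$ and $\partial_-\V$ connected of genus $g-1$. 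So a minimal defining set consists of exactly one disk; call it $D$.

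Next I would prove uniqueness. Let $D$ and $D'$ both be defining disks for $\V$, i.e. cutting $\V$ along either yields $\partial_-\V\times I$ (a product, since after the single $2$-handle removal nothing is capped off: a $2$-handle attached to $\partial_-\V\times\{1\}$ along a non-separating curve — note the curve must be non-separating, else cutting would disconnect or change genus incompatibly). Isotope $D'$ to meet $D$ transversally and minimally; $D\cap D'$ is a disjoint union of arcs and circles in the disk $D$. Circle components are removed by an innermost-disk surgery on $D'$ inside $\V$ (using irreducibility of the compression body, i.e. that $\V$ is irreducible, which holds since $\partial_-\V$ is incompressible and $\V$ is not a ball), without creating new intersections; this contradicts minimality unless there are none. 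For arc components, take an outermost arc on $D$ cutting off a subdisk $E\subset D$ with $\partial E = a\cup b$, $a\subset D$, $b\subset D'$; then $b$ is an arc on $D'$ cutting $D'$ into two disks $D_1'$ and $D_2'$, and the bands $D_i'\cup E$ (appropriately pushed off) are again properly embedded disks in $\V$ meeting $D$ in fewer components. One of these two must still be a defining disk — here I would invoke that cutting $\V$ along a defining disk gives a product, so at least one of $D_1'\cup E$, $D_2'\cup E$ is non-separating in $\V$ and isotopic rel nothing to a defining disk — again contradicting minimality of $|D\cap D'|$. Hence $D\cap D'=\emptyset$.

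Finally, with $D\cap D'=\emptyset$: cut $\V$ along $D$ to get $P=\partial_-\V\times I$; the disk $D'$ sits properly embedded in $P$ with $\partial D'\subset \partial_+\V$-part $=\partial_-\V\times\{1\}$ (after rounding). A properly embedded disk in $\partial_-\V\times I$ with boundary on $\partial_-\V\times\{1\}$ must be boundary-parallel since $\partial_-\V\times I$ is irreducible and $\partial_-\V\times\{1\}$ is incompressible — so $\partial D'$ bounds a disk in $\partial_-\V\times\{1\}$, hence (undoing the cut) $\partial D'$ is isotopic in $\partial_+\V$ to $\partial D$, and $D'$ is isotopic to $D$ in $\V$.

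\emph{The main obstacle} I anticipate is the bookkeeping in the outermost-arc step: one must verify that the band-sum $D_i'\cup E$ is again a \emph{defining} disk (not merely a compressing disk), and that it genuinely has fewer intersections with $D$; this requires carefully tracking which side of $b$ in $D'$ each piece lies on and using the product structure of $\V$ cut along a defining disk to rule out the separating case. Once that is in hand the rest is the standard incompressible-surface-in-a-product argument.

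\hfill$\square$
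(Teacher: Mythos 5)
There is a genuine gap in the Euler-characteristic step, and it undermines the rest of the argument. From $2-2g=\chi(\partial_-\V)-2k$ you get $\chi(\text{other components of }\partial_-\V)=2k-2$; combined with $k\geq 1$ and $\chi\leq 0$ for each such component this forces $k=1$ and $\chi(\text{other components})=0$. But $\chi=0$ does \emph{not} force ``no other components'': it allows torus components. Indeed $\partial_-\V$ may be a genus $g-1$ surface together with a torus (attach the single $2$-handle along a separating curve splitting $\partial_+\V$ into a genus $g-1$ piece and a genus one piece), and this case is not hypothetical --- the paper applies the lemma to the compression bodies $\V_2^i$ and $\W_1^i$ produced by weak reduction, whose minus boundaries are disconnected for the type (b), (c) and (d) GHSs. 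In the disconnected case the defining disk is \emph{separating} in $\V$ (cutting along it yields the disconnected product $\partial_-\V\times I$), so your parenthetical claim that the attaching curve ``must be non-separating, else cutting would disconnect'' is exactly backwards there, and your uniqueness argument, which uses non-separation both in the outermost-arc exchange and in the final product step, does not cover it. A secondary issue: the relation $\chi(\partial_+\V)=\chi(\partial_-\V)-2k$ ignores the $2$-sphere components that Definition \ref{def-compression} caps off with balls; with $s$ capped spheres it reads $\chi(\partial_-\V)=\chi(\partial_+\V)+2k-2s$ and only yields $k\leq s+1$, so you also need a handle-cancellation argument showing $s=0$ for a minimal defining set.

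The repair is to split into the two cases, as the paper does: if $\partial_-\V$ is connected the defining disk is the unique non-separating disk of $\V$; if $\partial_-\V$ is a genus $g-1$ surface plus a torus it is the unique compressing disk of $\V$, which is separating. Your disjointness-then-product strategy does adapt to the separating case (after cutting along $D$ one gets two product pieces, each carrying one scar of $D$, and a disjoint defining disk $D'$ is boundary-parallel in one of them), and in the connected case your last step should also address the possibility that the disk bounded by $\partial D'$ in the compressed surface contains both scars of $D$, in which case $D'$ is a band sum of two copies of $D$, hence separating and not a defining disk. As written, the proposal establishes the lemma only under an unstated connectivity hypothesis that the paper's applications do not satisfy.
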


\begin{proof}
If $\partial_-\V$ is connected, i.e. $\partial_-\V$ consists of a genus $g-1$ surface, then there is a unique non-separating disk in $\V$ up to isotopy.
If $\partial_-\V$ is disconnected, i.e. $\partial_-\V$ consists of a genus $g-1$ surface and a torus, then there is a unique compressing disk in $\V$ up to isotopy, where it is separating in $\V$.
Moreover, if we cut $\V$ along the uniquely determined disk, then we get $\partial_-\V\times I$ in any case.
Therefore, we can obtain $\V$ by attaching only one $1$-handle to $\partial_-\V\times I$ corresponding to the disk.
This gives a way to determine $\V$ by attaching only one $2$-handle to $\partial_+\V\times I$ and therefore the relevant defining set is the singleton set consisting of the disk.
Since this defining set consists of only one disk, it is a minimal defining set.
Moreover, if there is a minimal defining set for $\V$, i.e. it consists of a disk, then the disk must be a compressing disk of $\V$ otherwise the resulting compression body would be trivial.
Hence, it must consist of a non-separating disk (if $\partial_-\V$ is connected) or a separating compressing disk (if $\partial_-\V$ is disconnected) by considering the shape of the resulting minus boundary.
Hence, a minimal defining set for $\V$ is uniquely determined up to isotopy by the argument in the start of the proof.

This completes the proof.
\end{proof}

\begin{definition}\label{def-spine}
A \textit{spine} of a compression body $\V$ is a graph $\sigma$ embedded in $\V$ with some valence-one vertices possibly embedded in $\partial_-\V$ such that $\V-\eta(\sigma)$ is homeomorphic to $\partial_+\V \times [0,1]$ where $\eta(\sigma)$ is an open  regular neiborhood of $\sigma$.
A spine $\sigma$ of $\V$ is \textit{minimal} if it is a union of arcs, each of which has both ends on $\partial_-\V$ (or at a single vertex if $\V$ is a handlebody).

A spine $\sigma$ of a compression body $\V$ is \textit{dual} to a defining set $\Delta$ for $\V$ if each edge of $\sigma$ intersects a single disk of $\Delta$ exactly once, each disk of $\Delta$ intersects exactly one edge of $\sigma$, and each ball of $\V-\Delta$ contains exactly one vertex of $\sigma$, and all vertices of $\sigma$ in $\partial_-\V\times I$ component of $\V-\Delta$ are contained in $\partial_-\V$. 
\end{definition}

\begin{definition}
A \textit{Heegaard splitting} of a $3$-manifold $M$ is an expression of $M$ as a union $\V\cup_F \W$, denoted   as $(\V,\W;F)$ (or $(\V,\W)$ simply),  where $\V$ and $\W$ are compression bodies that intersect in a transversally oriented surface $F=\partial_+\V=\partial_+\W$. 
We say $F$ is the \textit{Heegaard surface} of this splitting. 
If $\V$ or $\W$ is homeomorphic to a product, then we say the splitting  is \textit{trivial}. 
If there are compressing disks $V\subset \V$ and $W\subset \W$ such that $V\cap W=\emptyset$, then we say the splitting is \textit{weakly reducible} and call the pair $(V,W)$ a \textit{weak reducing pair}. 
If $(V,W)$ is a weak reducing pair and $\partial V$ is isotopic to $\partial W$ in $F$, then we call $(V,W)$ a \textit{reducing pair}.
If the splitting is not trivial and we cannot take a weak reducing pair, then we call the splitting \textit{strongly irreducible}. 
If there is a pair of compressing disks $(\bar{V},\bar{W})$ such that $\bar{V}$ intersects $\bar{W}$ transversely in a point in $F$, then we call this pair a \textit{canceling pair} and say the splitting is \textit{stabilized}. 
Otherwise, we say the splitting is \textit{unstabilized}.
\end{definition}

\begin{definition}
Let $F$ be a surface of genus at least two in a compact, orientable $3$-manifold $M$. 
Then the \emph{disk complex} $\D(F)$ is defined as follows: 
\begin{enumerate}[(i)]
\item Vertices of $\D(F)$ are isotopy classes of compressing disks for $F$.
\item A set of $m+1$ vertices forms an $m$-simplex if there are representatives for each
that are pairwise disjoint.
\end{enumerate}
Hence, two compressing disks $D_1$ and $D_2$ of $F$ correspond to the same vertex in $\D(F)$ if and only if there exists an isotopy $h_t$ defined on $M$ such that (i) $h_0=\operatorname{id}$, (ii) $h_1(D_1)=D_2$, and  (iii) $h_t(F)=F$ for $0\leq t \leq 1$.
\end{definition}

\begin{definition}\label{def-DVWF}
Consider a Heegaard splitting $(\V,\W;F)$ of an orientable, irreducible $3$-manifold $M$. 
Let $\DV(F)$ and $\DW(F)$ be the subcomplexes of $\D(F)$ spanned by compressing disks in $\V$ and $\W$ respectively. 
We call these subcomplexes \textit{the disk complexes of $\V$ and $\W$}.
Let $\DVW(F)$ be the subset  of $\D(F)$ consisting of the simplices having at least one vertex from $\DV(F)$ and at least one vertex from $\DW(F)$.
We will denote the isotopy class $[V]\in \DV(F)$ as $V\subset \V$ or $V\subset\DV(F)$ for the sake of convenience if there is no confusion.
\end{definition}

\begin{theorem}[McCullough, \cite{8}]
$\DV(F)$ and $\DW(F)$ are contractible.
\end{theorem}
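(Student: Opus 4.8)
The plan is to deduce this from the standard fact that the complex $\mathcal{D}(\mathcal{C})$ of compressing disks of any nontrivial compression body $\mathcal{C}$ is contractible, applied to $\mathcal{C}=\V$ and to $\mathcal{C}=\W$: a compressing disk of $F=\partial_+\V$ lying in $\V$ is exactly a compressing disk of the compression body $\V$, and isotopy of such disks in $M$ fixing $F$ agrees with isotopy inside $\V$, so $\DV(F)$ is isomorphic to $\mathcal{D}(\V)$, and likewise $\DW(F)\cong\mathcal{D}(\W)$. To prove contractibility of $\mathcal{D}(\V)$ I would run the usual innermost-disk surgery and cone-off scheme: it suffices to show that for every $n\geq 0$ any continuous map $\phi\colon S^n\to\DV(F)$ extends to $B^{n+1}$, and by simplicial approximation we may take $\phi$ simplicial with image in a finite subcomplex $L\subseteq\DV(F)$.

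First I would fix a compressing disk $D\subset\V$ once and for all. For each vertex $[E]$ of $L$ I would choose a representative disk $E$ transverse to $D$ and so that disks labelling vertices of a common simplex of $L$ are pairwise disjoint (a simultaneous disjoint realization, which exists for finitely many mutually compatible disk classes); among all such choices pick one minimizing the complexity $I(L):=\sum_{[E]\in L^{(0)}}|E\cap D|$. If $I(L)=0$, every representative is disjoint from $D$, so for any simplex $\{[E_0],\dots,[E_k]\}$ of $L$ not containing $[D]$ the disks $E_0,\dots,E_k,D$ are pairwise disjoint and hence $\{[E_0],\dots,[E_k],[D]\}$ spans a simplex; thus the cone $[D]\ast L$ sits inside $\DV(F)$ and $\phi$ extends over $B^{n+1}$ by coning to the vertex $[D]$.

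For the inductive step suppose $I(L)>0$, and choose a component of the intersection of $D$ with the union of the chosen representatives that is innermost on $D$, cutting off a surgery subdisk $D'\subseteq D$ whose interior misses all the representatives. Surgering a representative $E$ along $D'$ replaces $E$ by two disks whose total intersection with $D$ is strictly smaller than $|E\cap D|$; using irreducibility of $M$ and essentiality of the relevant boundary curves one checks that at least one surgered piece is again a compressing disk of $\V$, and that performing these surgeries consistently across $L$ produces a finite subcomplex $L'$ with $I(L')<I(L)$ together with a homotopy of $\phi$ into $L'$. Composing that homotopy with the extension over $B^{n+1}$ supplied by the inductive hypothesis applied to $L'$ yields the desired extension of $\phi$; since $I$ strictly decreases, the induction terminates at the base case $I=0$.

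The hard part is the inductive step: one must make the surgery operation canonical and compatible with the entire simplicial structure of $\phi(S^n)$ — keeping track of the simultaneous disjoint realizations, verifying in every configuration (arc versus circle intersections, separating versus non-separating surgery disk) that some surgered piece remains essential in $\V$, and checking that the complexity actually drops on each simplex and not merely on vertices. This is exactly the content of McCullough's argument in \cite{8}, so within the present paper the theorem is invoked as a black box and no independent proof is given.
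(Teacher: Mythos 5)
Your outline is the standard innermost-disk surgery and coning argument, which is essentially McCullough's own proof; the paper itself supplies no proof and simply cites \cite{8}, exactly as you ultimately do. So your proposal is consistent with the paper's treatment, and the sketch you give of the deferred argument is the correct one.
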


From now on, we will consider only unstabilized Heegaard splittings of an irreducible $3$-manifold. 
If a Heegaard splitting of a compact $3$-manifold is reducible, then the manifold is reducible or the splitting is stabilized (see \cite{SaitoScharlemannSchultens2005}).
Hence, we can exclude the possibilities of reducing pairs among weak reducing pairs.

\begin{definition}
Suppose $W$ is a compressing disk for $F\subset M$. 
Then there is a subset of $M$ that can be identified with $W\times I$ so that $W=W\times\{\frac{1}2\}$ and $F\cap(W\times I)=(\partial W)\times I$. 
We form the surface $F_W$, obtained by \textit{compressing $F$ along $W$}, by removing $(\partial W)\times I$ from $F$ and replacing it with $W\times(\partial I)$. 
We say the two disks $W\times(\partial I)$ in $F_W$ are the $\textit{scars}$ of $W$. 
\end{definition}

\begin{lemma}[Lustig and Moriah, Lemma 1.1 of \cite{7}] \label{lemma-2-8}
Suppose that $M$ is an irreducible $3$-manifold and $(\V,\W;F)$ is an unstabilized Heegaard splitting of $M$. 
If $F'$ is obtained by compressing $F$ along a collection of pairwise disjoint disks, then no $S^2$ component of $F'$ can have scars from disks in both $\V$ and $\W$. 
\end{lemma}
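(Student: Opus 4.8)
The plan is to argue by contradiction and manufacture a reducing pair for $(\V,\W;F)$, which — as recorded just above the statement — is impossible for an unstabilized Heegaard splitting of an irreducible manifold.

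So suppose some $S^2$ component $S$ of $F'$ has a scar from a disk in $\V$ and a scar from a disk in $\W$. First I would pass to the relevant subcollection: $S$ is obtained from a connected planar subsurface $P\subset F$ by capping the circles of $\partial P$ with scar disks, so only those compressing disks whose boundaries lie on $\partial P$ matter, and I may assume the compressing collection is $\{V_1,\dots,V_p\}\subset\V$ together with $\{W_1,\dots,W_q\}\subset\W$ with $p,q\ge 1$, with $\partial P=\bigcup_i\partial V_i\cup\bigcup_j\partial W_j$, and with $S=P\cup(\text{scars})$ a $2$-sphere. Then $p+q=|\partial P|\ge 2$, and I would induct on $p+q$.

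For the base case $p+q=2$, $P$ is an annulus, so $\partial V_1$ is isotopic to $\partial W_1$ in $F$; since $V_1\subset\V$ and $W_1\subset\W$ have disjoint interiors, $(V_1,W_1)$ is a reducing pair, hence the splitting is reducible, hence (as $M$ is irreducible) stabilized — a contradiction. For the step $p+q\ge 3$, after reindexing assume $p\ge 2$ (the case $q\ge 2$ being symmetric). I would choose a properly embedded arc $\gamma\subset P$ from $\partial V_1$ to $\partial V_2$ and band $V_1$ to $V_2$ along $\gamma$, pushing the band into $\V$, to obtain a compressing disk $V'\subset\V$ disjoint from $V_3,\dots,V_p,W_1,\dots,W_q$. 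The surface obtained by compressing $F$ along $\{V',V_3,\dots,V_p,W_1,\dots,W_q\}$ again has a $2$-sphere component: its planar part is $P$ with a pair-of-pants neighborhood of $\partial V_1\cup\gamma\cup\partial V_2$ deleted, which is still connected and planar with $(p-1)+q$ boundary circles, and it still carries a scar from $\V$ (from $V'$, as $p-1\ge 1$) and from $\W$ (from any $W_j$). This contradicts the inductive hypothesis.

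The step I expect to require the most care is the bookkeeping in the inductive step: checking that the banded disk $V'$ is again an honest compressing disk — if $\partial V'$ were to bound a disk in $F$, then $\partial V_1$ and $\partial V_2$ would already be isotopic in $F$, so one of the two could be discarded and $p+q$ would drop anyway — and confirming by an Euler characteristic count that removing the pair-of-pants neighborhood leaves a connected planar surface with exactly one fewer boundary circle, so that capping it genuinely produces a $2$-sphere whose scars still come from both $\V$ and $\W$. Everything else is routine cut-and-paste; the only genuine input from $3$-manifold topology enters at the base case, through the cited fact that a reducible Heegaard splitting of an irreducible manifold is stabilized.
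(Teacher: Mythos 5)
This lemma is not proved in the paper at all --- it is quoted verbatim from Lustig--Moriah (Lemma 1.1 of \cite{7}) --- so there is no internal proof to compare against; your argument is the standard one for this statement and is essentially correct. The base case is exactly the input recorded just above the lemma in the paper (a reducing pair forces reducibility or stabilization, both excluded), and the inductive banding step, including the Euler characteristic and connectivity checks, goes through as you describe.

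One piece of bookkeeping deserves repair. You set $p+q=|\partial P|$, which silently assumes that each compressing disk contributes exactly one scar to the sphere $S$. This can fail: if some $V_1$ is non-separating in $F$, both of its scars $V_1\times\{0\}$ and $V_1\times\{1\}$ may lie on the same component of $F'$, so a single disk can account for two boundary circles of $P$ (this already happens for a genus-two surface compressed along one non-separating disk from each side). The fix is to run the induction on the number of scars on $S$ rather than on the number of disks, and in the inductive step to band two $\V$-scars along an arc in $P$; when the two scars come from the same disk this means banding $V_1$ to a parallel push-off of itself, which still produces an embedded disk in $\V$ disjoint from the rest of the collection, and the rest of your count is unchanged. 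Separately, your fallback for the case ``$\partial V'$ bounds a disk in $F$'' (discard one of $V_1,V_2$) is shakier than it looks, since deleting a disk from the collection changes which component of the compressed surface contains $P$ and that component need not remain a sphere; but the worry is in fact vacuous --- if $\partial V'$ bounded a disk $D$ in $F$, then an Euler characteristic count on $D$ minus the pair of pants (or minus $\overline{P\setminus N}$) would force some $\partial V_i$ or $\partial W_j$ to bound a disk in $F$, contradicting that these are boundaries of compressing disks. With these two adjustments the proof is complete.
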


If we add the assumption that the genus of the Heegaard splitting is three, then we get the following important lemma.

\begin{lemma}[J. Kim, Lemma 2.9 of \cite{JungsooKim2013}]\label{lemma-2-9}
Suppose that $M$ is an irreducible $3$-manifold and $(\V,\W;F)$ is an unstabilized, genus three  Heegaard splitting of $M$. 
If there exist three mutually disjoint compressing disks $V$, $V'\subset\V$ and $W\subset \W$, then either $V$ is isotopic to $V'$, or one of $\partial V$ and $\partial V'$ bounds a punctured torus $T$ in $F$ and  the other is a non-separating loop in $T$. 
Moreover, we cannot choose three weak reducing pairs $(V_0, W)$, $(V_1,W)$, and $(V_2,W)$ such that $V_i$ and $V_j$ are mutually disjoint and non-isotopic in $\V$ for $i\neq j$. 
\end{lemma}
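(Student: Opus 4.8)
The plan is to treat the dichotomy and the ``moreover'' statement separately, and to base the dichotomy on one Euler characteristic computation together with Lemma \ref{lemma-2-8}. Dispose first of the easy alternative: if $\partial V$ is isotopic to $\partial V'$ in $F$, then after an isotopy making $\partial V=\partial V'$ the two disks cobound a $2$-sphere in the irreducible manifold $\V$, hence a ball, so $V$ is isotopic to $V'$. Assume henceforth that $\partial V$ and $\partial V'$ are disjoint, essential, and non-isotopic in $F$.

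Next, look at the surface $F''$ obtained by compressing $F$ along $V\cup V'$. Since $\chi(F)=-4$ we get $\chi(F'')=0$, so either $F''$ is a disjoint union of tori, or (its components being closed and at least one having positive Euler characteristic) it has an $S^2$ component; moreover all scars in $F''$ lie in $\V$, and since $F$ is connected and at least one compression was performed, every component of $F''$ carries at least one scar. I would first rule out the all-tori case. The curve $\partial W$, disjoint from $\partial V\cup\partial V'$, survives inside a single torus component $\tau$ of $F''$. If $\partial W$ is essential on $\tau$, compressing $\tau$ along $W$ turns it into an $S^2$ carrying a scar of $V$ or $V'$ together with the two scars of $W$; if $\partial W$ bounds a disk $D\subset\tau$, then $D$ must contain a scar (otherwise $\partial W$ would bound a disk in $F$), and compressing along $W$ splits off an $S^2$ consisting of $D$ and one scar of $W$. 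In either case $F$ compressed along the pairwise disjoint disks $V,V'\in\V$ and $W\in\W$ acquires an $S^2$ component with scars from both sides, contradicting Lemma \ref{lemma-2-8} (here $M$ irreducible and the splitting unstabilized are used). Hence $F''$ has an $S^2$ component, carrying $k$ scars with $1\le k\le 4$; a short case check, using that $F$ is connected of genus three, kills $k\in\{1,2,4\}$, leaving $k=3$. Then the $S^2$ is a capped-off pair of pants $P\subset F$ whose three boundary curves are two parallel copies of one of $\partial V,\partial V'$, say $\partial V$, and one copy of $\partial V'$; undoing the compressions presents $F$ as a once-punctured torus $T$ containing $\partial V$ non-separatingly, glued along $\partial V'=\partial T$ to a once-punctured genus-two surface. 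That is exactly the asserted conclusion, with $V$ and $V'$ exchanged if $P$ carries two copies of $\partial V'$ instead.

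For the ``moreover'', suppose there were weak reducing pairs $(V_0,W)$, $(V_1,W)$, $(V_2,W)$ with the $V_i$ pairwise disjoint and pairwise non-isotopic in $\V$. Applying the dichotomy to $\{V_0,V_1,W\}$ and relabeling, $\partial V_0$ is separating in $F$, bounds a punctured torus $T_0$, and $\partial V_1$ is essential and non-boundary-parallel in $T_0$. Applying the dichotomy to $\{V_0,V_2,W\}$: the option that $\partial V_2$ bounds a punctured torus containing $\partial V_0$ is impossible, since a separating curve lying inside a once-punctured torus and disjoint from its boundary must be boundary-parallel there, forcing $\partial V_0\simeq\partial V_2$; so $\partial V_0$ again bounds a punctured torus containing $\partial V_2$, and that punctured torus equals $T_0$ because a separating curve bounds a unique punctured torus. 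Thus $\partial V_1$ and $\partial V_2$ are disjoint, essential, non-boundary-parallel curves in the once-punctured torus $T_0$; cutting $T_0$ along $\partial V_1$ reduces the situation to ``an essential curve in a pair of pants is boundary-parallel'', which (using $\partial V_2\not\simeq\partial V_0$) forces $\partial V_1\simeq\partial V_2$, hence $V_1\simeq V_2$, a contradiction.

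The step I expect to be delicate is ruling out the all-tori case: one must treat uniformly the alternatives that $\partial W$ is essential on, or bounds a disk in, the compressed torus, and in the latter case check that a scar-free bounding disk really does lift back to a disk in $F$, so that in every subcase one genuinely produces an $S^2$ component with scars from both $\V$ and $\W$. The remaining ingredients — the Euler characteristic trichotomy, the elimination of $k\in\{1,2,4\}$, the reconstruction of $F$ from the capped pair of pants, and the once-punctured-torus fact in the ``moreover'' — are routine once the setup is in place.
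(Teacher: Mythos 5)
Your argument is correct and runs along the same lines as the source: the paper only quotes this as Lemma 2.9 of \cite{JungsooKim2013}, and the intended proof likewise compresses $F$ along the disjoint disks and invokes Lemma \ref{lemma-2-8} to forbid a sphere component with scars from both $\V$ and $\W$. The steps you flag as routine (the Euler characteristic trichotomy, eliminating $k\in\{1,2,4\}$ via connectedness and genus three, and the punctured-torus/pair-of-pants facts in the ``moreover'') all check out as stated.
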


Note that ``\textit{one of $\partial V$ and $\partial V'$ bounds a punctured torus $T$ in $F$ and  the other is a non-separating loop in $T$}'' means that one of $V$ and $V'$, say $V'$, cuts off a solid torus from $\V$ and $V$ is a meridian disk of the solid torus and therefore $V'$ is a band sum of two parallel copies of $V$ in $\V$.

\begin{definition}[J. Kim, Definition 2.12 of \cite{JungsooKim2012}]
In a weak reducing pair for a Heegaard splitting $(\V,\W;F)$, if a disk belongs to $\V$, then we call it a \emph{$\V$-disk}.
Otherwise, we call it a \emph{$\W$-disk}.	
We call a $2$-simplex in $\DVW(F)$ represented by two vertices in $\DV(F)$ and one vertex in $\DW(F)$ a \textit{$\V$-face}, and also define a \textit{$\W$-face} symmetrically.
Let us consider a $1$-dimensional graph as follows.
\begin{enumerate}
\item We assign a vertex to each $\V$-face in $\DVW(F)$.
\item If a $\V$-face shares a weak reducing pair with another $\V$-face, then we assign an edge between these two vertices in the graph.
\end{enumerate}
We call this graph the \emph{graph of $\V$-faces}.
If there is a maximal subset $\varepsilon_\V$ of $\V$-faces in $\DVW(F)$ representing a connected component of the graph of $\V$-faces and the component is not an isolated vertex, then we call $\varepsilon_\V$ a \emph{$\V$-facial cluster}.
Similarly, we define the \emph{graph of $\W$-faces} and  a \textit{$\W$-facial cluster}.
In a $\V$-facial cluster, every weak reducing pair gives the common $\W$-disk, and vise versa.
\end{definition}

If we consider an unstabilized, genus three Heegaard splitting of an irreducible $3$-manifold, then we get the following lemmas.

\begin{lemma}[J. Kim, Lemma 2.13 of \cite{JungsooKim2012}]\label{lemma-2-14}
Suppose that $M$ is an irreducible $3$-manifold and $(\V,\W;F)$ is an unstabilized, genus three Heegaard splitting of $M$.
If there are two $\V$-faces $f_1$ represented by $\{V_0,V_1,W\}$ and $f_2$ represented by $\{V_1, V_2, W\}$ sharing a weak reducing pair $(V_1,W)$, then $\partial V_1$ is non-separating, and $\partial V_0$, $\partial V_2$ are separating in $F$.
Therefore, there is a unique weak reducing pair in a $\V$-facial cluster which can belong to two or more faces in the $\V$-facial cluster.
\end{lemma}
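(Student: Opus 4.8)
The plan is to derive the first assertion from Lemma~\ref{lemma-2-9} by a dichotomy on whether $\partial V_1$ is separating in $F$, and then to obtain the ``therefore'' clause as a formal consequence of the first assertion together with the combinatorics of the graph of $\V$-faces. For the setup, note that since $\{V_0,V_1,W\}$ and $\{V_1,V_2,W\}$ span $2$-simplices of $\DVW(F)$, the three disks in each triple have mutually disjoint representatives, $V_0\not\simeq V_1$ and $V_1\not\simeq V_2$, and, since the two faces are distinct, $V_0\not\simeq V_2$. Applying Lemma~\ref{lemma-2-9} to the mutually disjoint triple $\{V_0,V_1,W\}$: one of $\partial V_0,\partial V_1$ bounds a once-punctured torus in $F$ and the other is a non-separating loop in that torus; the same holds for $\{V_1,V_2,W\}$.

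The main step is to show $\partial V_1$ is non-separating. Suppose not. Then in the dichotomy for $\{V_0,V_1,W\}$ the separating curve $\partial V_1$ cannot be the non-separating loop, so $\partial V_1$ bounds a once-punctured torus $T$ and $\partial V_0$ is non-separating in $T$; likewise $\partial V_1$ bounds a once-punctured torus $T'$ with $\partial V_2$ non-separating in $T'$. Since $\partial V_1$ cuts the genus-three surface $F$ into a once-punctured torus and a once-punctured genus-two surface, $T=T'$, so $\partial V_0$ and $\partial V_2$ both lie in $T$ and are non-separating there. By the remark following Lemma~\ref{lemma-2-9}, $V_1$ cuts off a solid torus $ST$ from $\V$ whose intersection with $F$ is $T$. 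Using $\partial V_0\cap\partial V_1=\emptyset$ and irreducibility of $\V$, I can isotope $V_0$ off $V_1$ without changing $\partial V_0$; then $V_0\subset ST$, because $\partial V_0\subset T\subset\partial ST$, and since $\partial V_0$ is essential and not isotopic to $\partial T$, the disk $V_0$ is a meridian disk of $ST$. The same applies to $V_2$. But the meridian disk of a solid torus is unique up to isotopy, so $\partial V_0$ is isotopic to $\partial V_2$ on $\partial ST$, hence in $F$, whence $V_0\simeq V_2$ — contradicting $V_0\not\simeq V_2$. Therefore $\partial V_1$ is non-separating, and feeding this back into the dichotomy for each of $\{V_0,V_1,W\}$ and $\{V_1,V_2,W\}$ forces $\partial V_0$ and $\partial V_2$ to bound once-punctured tori, i.e.\ to be separating in $F$.

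For the ``therefore'' clause, let $\varepsilon_\V$ be a $\V$-facial cluster with common $\W$-disk $W$, and suppose a weak reducing pair $(V_1,W)$ lies in at least two of its faces, say $\{X,V_1,W\}$ and $\{Y,V_1,W\}$ with $X\not\simeq Y$; by the first part, $\partial V_1$ is non-separating while $\partial X,\partial Y$ are separating. I claim every face of $\varepsilon_\V$ contains $V_1$. A face $g$ adjacent to $\{X,V_1,W\}$ in the graph of $\V$-faces shares one of the weak reducing pairs $(V_1,W)$ or $(X,W)$; in the first case $V_1\in g$, and in the second case, if $V_1\notin g$, then $(X,W)$ lies in two distinct faces, so by the first part $\partial X$ would be non-separating — a contradiction. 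Propagating this along paths in the connected graph of $\varepsilon_\V$ (using the first part repeatedly, noting that at every stage the $\V$-disk appearing with $V_1$ in a face has separating boundary) shows every face of $\varepsilon_\V$ contains $V_1$; consequently any weak reducing pair $(V_2,W)$ of $\varepsilon_\V$ with $V_2\not\simeq V_1$ lies only in the face $\{V_1,V_2,W\}$, so $(V_1,W)$ is the unique weak reducing pair of $\varepsilon_\V$ belonging to two or more faces (and such a pair does exist, since a facial cluster, being a non-trivial connected component of the graph of $\V$-faces, contains an edge).

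The step I expect to require the most care is promoting ``$\partial V_0$ and $\partial V_2$ are isotopic meridians of $ST$'' to ``$V_0$ and $V_2$ represent the same vertex of $\D(F)$'': one must check that an isotopy of disks inside the solid torus $ST$ is realized by an ambient isotopy of $M$ preserving $F$ throughout (by pushing the relevant boundary isotopy off the scar of $V_1$ so that it stays within $T\subset F$). The remaining ingredients — the surgery argument isotoping $V_0$ and $V_2$ off $V_1$, the identification $T=T'$ from the separating structure of $\partial V_1$, and the graph-theoretic propagation in the last paragraph — are routine.
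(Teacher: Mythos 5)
The paper itself gives no proof of this lemma --- it is quoted verbatim from Lemma 2.13 of \cite{JungsooKim2012} --- so there is no in-paper argument to compare against. Your reconstruction is correct and follows the natural route: apply the dichotomy of Lemma \ref{lemma-2-9} to each of the two triples, rule out separating $\partial V_1$ by observing that $V_0$ and $V_2$ would then both be meridian disks of the solid torus that $V_1$ cuts off from $\V$ (forcing $V_0\simeq V_2$ and hence $f_1=f_2$), and obtain the uniqueness clause by propagating the separating/non-separating alternative through the connected graph of $\V$-faces. The one delicate point you flag is handled correctly: $\partial V_0$ and $\partial V_2$ are non-separating curves in the once-punctured torus $T\subset F$ that are homologous there up to sign, hence isotopic within $T$, after which the two disks are isotopic rel boundary inside the solid torus, so they do define the same vertex of $\D(F)$.
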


\begin{definition}[J. Kim, Definition 2.14 of \cite{JungsooKim2012}]\label{definition-2-15}
By Lemma \ref{lemma-2-14}, there is a unique weak reducing pair in a $\V$-facial cluster belonging to two or more faces in the $\V$-facial cluster.
We call it the \textit{center} of a $\V$-facial cluster.
We call the other weak reducing pairs \textit{hands} of a $\V$-facial cluster.
See Figure \ref{figure3}.
Note that if a $\V$-face in a $\V$-facial cluster is represented by two weak reducing pairs, then one is the center and the other is a hand.
Lemma \ref{lemma-2-14} means that the $\V$-disk in the center of a $\V$-facial cluster is non-separating, and those from hands are all separating.
Moreover, Lemma \ref{lemma-2-9} implies that (i) the $\V$-disk in a hand of a $\V$-facial cluster is a band sum of two parallel copies of that of the center of the $\V$-facial cluster and (ii) the $\V$-disk of a hand of a $\V$-facial cluster determines that of the center of the $\V$-facial cluster by the uniqueness of the meridian disk of the solid torus which the $\V$-disk of the hand cuts off from $\V$.
\end{definition}

\begin{figure}
\includegraphics[width=4.5cm]{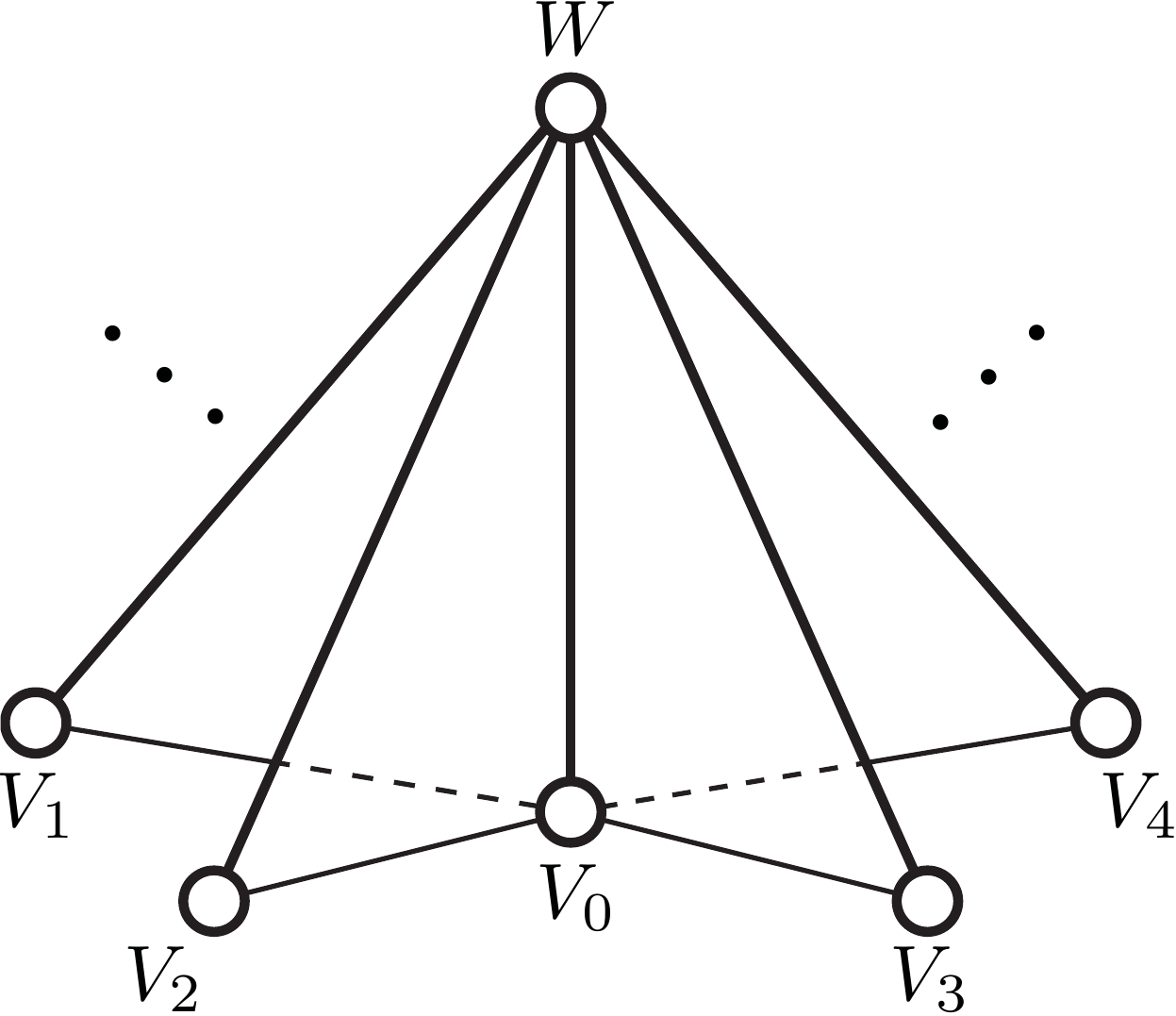}
\caption{An example of a $\V$-facial cluster in $\DVW(F)$. $(V_0, W)$ is the center and the other weak reducing pairs are hands. \label{figure3}}
\end{figure}

\begin{lemma}[J. Kim, Lemma 2.15 of \cite{JungsooKim2012}]\label{lemma-2-16} 
Assume $M$ and $F$ as in Lemma \ref{lemma-2-14}.
Every $\V$-face belongs to some $\V$-facial cluster. 
Moreover, every $\V$-facial cluster has infinitely many hands.
\end{lemma}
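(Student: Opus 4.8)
The plan is to establish both assertions at once, by a single construction: given an arbitrary $\V$-face, I will produce infinitely many distinct $\V$-faces sharing one of its weak reducing pairs. Fix a $\V$-face $f=\{V_0,V_1,W\}$. The disks $V_0,V_1\subset\V$ and $W\subset\W$ are pairwise disjoint with $V_0\not\simeq V_1$, so by Lemma~\ref{lemma-2-9} and the Note following it, after relabelling we may assume that $V_1$ cuts off a solid torus from $\V$ with meridian disk $V_0$; equivalently, $\partial V_1$ bounds a punctured torus $T\subset F$ in which $\partial V_0$ is non-separating. Thus $V_0$ is non-separating, $V_1$ is separating, and $(V_0,W)$ is a weak reducing pair. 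Granting that we can produce pairwise non-isotopic separating compressing disks $V_1^{(n)}\subset\V$ ($n\geq 0$, with $V_1^{(0)}=V_1$), each disjoint from $V_0$ and from $W$ and each cutting off a solid torus from $\V$ with meridian $V_0$, the faces $f_n:=\{V_0,V_1^{(n)},W\}$ are pairwise distinct $\V$-faces all containing $(V_0,W)$, so they all lie in one $\V$-facial cluster $\varepsilon_\V$, in which (by Lemma~\ref{lemma-2-14} and Definition~\ref{definition-2-15}) $(V_0,W)$ is the center and the $(V_1^{(n)},W)$ with $n\geq 1$ are distinct hands. Applying this with $f$ an arbitrary $\V$-face then shows $f$ is non-isolated in the graph of $\V$-faces, hence lies in some $\V$-facial cluster; applying it with $f$ any face of a prescribed $\V$-facial cluster $\varepsilon'$ forces $\varepsilon'=\varepsilon_\V$ (a facial cluster is a connected component of that graph), so $\varepsilon'$ has infinitely many hands.

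For the construction I would first locate a surface to work in. Since $(\V,\W;F)$ is unstabilized and $M$ is irreducible, no weak reducing pair is a reducing pair and every compressing disk for $F$ is essential. The curve $\partial W$ is disjoint from both $\partial V_0$ and $\partial V_1=\partial T$, and it cannot lie in $T$: being essential and disjoint from the non-separating curve $\partial V_0$ inside the punctured torus $T$, it would have to be isotopic in $T$ to $\partial V_0$ or to $\partial T=\partial V_1$, making $(V_0,W)$ or $(V_1,W)$ a reducing pair. So $\partial W\subset T':=\overline{F\setminus T}$. Cut $F$ along $\partial V_0$ and call the two resulting copies of $\partial V_0$ by $\partial_1,\partial_2$; cutting $T$ along $\partial V_0$ gives a pair of pants with boundary $\partial_1,\partial_2,\partial V_1$, and an arc of it joining $\partial_1$ to $\partial_2$ misses $\partial W\subset T'$. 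Hence $\partial_1$, $\partial_2$ and the curve $\delta_0:=\partial V_1$ all lie in one component $G$ of $F$ cut along $\partial V_0\cup\partial W$, and $\delta_0$ is essential and non-peripheral in $G$: it bounds no disk in $F$, hence none in $G$; it is separating in $F$, hence not isotopic to the copies $\partial_1,\partial_2$ of the non-separating $\partial V_0$; and it is not isotopic to any copy of $\partial W$, since $(V_1,W)$ is not a reducing pair.

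Because $\delta_0$ is essential and non-peripheral in the surface $G$, there is an essential simple closed curve $e\subset G$ with geometric intersection number $i(e,\delta_0)\geq 1$ (a standard fact about surfaces). Any essential curve of $G$ is essential in $F$, since a disk in $F$ bounded by a curve lying in the interior of $G$ cannot contain the boundary curves of $G$ — copies of the essential curves $\partial V_0$ and $\partial W$ — and hence lies in $G$; so the Dehn twist $\tau_e$ is a nontrivial self-homeomorphism of $F$ fixing $\partial V_0$ and $\partial W$ pointwise. I would then pick a simple closed curve $c_0\subset T$ meeting $\partial V_0$ transversally in one point (so $c_0$ is disjoint from $\partial W\subset T'$, and writing $N(\cdot)$ for a regular neighborhood in $F$, the set $N(\partial V_0\cup c_0)$ is a punctured torus with $\partial N(\partial V_0\cup c_0)\simeq\partial T=\delta_0$), and for $n\geq 0$ set $c_n:=\tau_e^{\,n}(c_0)$ and $\delta_n:=\partial N(\partial V_0\cup c_n)=\tau_e^{\,n}(\delta_0)$. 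Each $c_n$ is a simple closed curve meeting $\partial V_0$ once and disjoint from $\partial W$; and since $e$ is essential with $i(e,\delta_0)\neq 0$, the $\delta_n$ are pairwise non-isotopic in $F$ (and none of them is isotopic to $\partial W$, as $\tau_e$ fixes $\partial W$ and $\delta_0\not\simeq\partial W$). Finally, exactly as in the Note after Lemma~\ref{lemma-2-9}, let $V_1^{(n)}\subset\V$ be the band sum of two parallel copies of $V_0$ along a band running parallel to $c_n$ and pushed into a product collar of $F$ in $\V$: it is a compressing disk for $\V$ with $\partial V_1^{(n)}=\delta_n$, it cuts off a solid torus from $\V$ with meridian $V_0$, it is separating, and it is disjoint from $V_0$ and from $W$ because $\delta_n$ is disjoint from $\partial V_0$ and $\partial W$. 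Distinct $\delta_n$ yield non-isotopic disks $V_1^{(n)}$, so $\{V_1^{(n)}\}_{n\geq 0}$ is the required family.

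The one genuinely delicate step is the middle paragraph: locating the component $G$ and verifying that $\delta_0=\partial V_1$ is non-peripheral there. This is precisely where unstabilizedness is used, through the absence of reducing pairs, which rules out the degenerate possibility $\partial V_1\simeq\partial W$ (equivalently, that $G$ is too simple to contain the twisting curve $e$). Once $e$ is available, the non-isotopy of the $\delta_n$ and the band-sum description of the $V_1^{(n)}$ are routine.
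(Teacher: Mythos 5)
This paper only quotes Lemma \ref{lemma-2-16} from Lemma 2.15 of \cite{JungsooKim2012} and contains no proof of it, so there is nothing here to compare against line by line; judged on its own, your argument is correct and is the expected one: Lemma \ref{lemma-2-9} forces one $\V$-disk of the face to be non-separating with the other a band sum of two of its parallel copies, and producing infinitely many pairwise non-isotopic such band sums disjoint from $W$ yields both the cluster containing the given face and its infinitely many hands. The one step I would make explicit is that the twisting curve $e$ you find inside the cut-open subsurface $G$ needs $i(e,\delta_0)>0$ computed in $F$, not merely in $G$; since the boundary curves of $G$ are copies of the essential curves $\partial V_0$ and $\partial W$, $G$ is an essential subsurface of $F$ and the two intersection numbers agree, so this is a presentational rather than a mathematical gap.
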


The next is the definition of ``\textit{generalized Heegaard splitting}'' originated from \cite{ScharlemannThompson1994}.

\begin{definition}[Definition 4.1 of \cite{Bachman2008}]\label{definition-2-9}
A \textit{generalized Heegaard splitting} (GHS) $\mathbf{H}$ of a $3$-manifold $M$ is a pair of sets of pairwise disjoint, transversally oriented, connected surfaces, $\operatorname{Thick}(\mathbf{H})$ and $\operatorname{Thin}(\mathbf{H})$ (called the \textit{thick levels} and \textit{thin levels}, resp.), which satisfies the following conditions.
\begin{enumerate}
\item Each component $M'$ of $M-\operatorname{Thin}(\mathbf{H})$ meets a unique element $H_+$ of $\operatorname{Thick}(\mathbf{H})$ and $H_+$ is a Heegaard surface in $M'$.
Henceforth we will denote the closure of the component of $M-\operatorname{Thin}(\mathbf{H})$ that contains an element $H_+\in\operatorname{Thick}(\mathbf{H})$ as $M(H_+)$.
\item As each Heegaard surface $H_+\subset M(H_+)$ is transversally oriented, we can consistently talk about the points of $M(H_+)$ that are ``above''  $H_+$ or ``below'' $H_+$.
Suppose $H_-\in\operatorname{Thin}(\mathbf{H})$.
Let $M(H_+)$ and $M(H_+')$ be the submanifolds on each side of $H_-$.
Then $H_-$ is below $H_+$ if and only if it is above $H_+'$.
\item There is a partial ordering on the elements of $\operatorname{Thin}(\mathbf{H})$ which satisfies the following: Suppose $H_+$ is an element of $\operatorname{Thick}(\mathbf{H})$, $H_-$ is a component of $\partial M(H_+)$ above $H_+$ and $H_-'$ is a component of $\partial M(H_+)$ below $H_+$.
Then $H_->H_-'$.
\end{enumerate}
We denote the maximal subset of $\operatorname{Thin}(\mathbf{H})$ consisting of surfaces only in the interior of $M$ as $\overline{\operatorname{Thin}}(\mathbf{H})$ and call it  the \textit{inner thin levels}.
If the corresponding Heegaard splitting of $M(H_+)$ is not trivial for every $H_+\in\operatorname{Thick}(\mathbf{H})$, then we call $\mathbf{H}$ \textit{clean}.
\end{definition}

Note that a GHS in this article is the same as a \textit{pseudo-GHS} in \cite{Bachman2008} since we allow a GHS to have product compression bodies and we do not encounter thin $2$-spheres.

The next is the definition of ``\textit{generalized Heegaard splitting}'' originated from \cite{ScharlemannThompson1994}.

\begin{definition}[Bachman, a restricted version of Definition 5.2, Definition 5.3, and Definition 5.6 of \cite{Bachman2008}]\label{definition-WR}
Let $M$ be an orientable, irreducible 3-manifold.
Let $\mathbf{H}$ be an unstabilized Heegaard splitting of $M$, i.e. $\operatorname{Thick}(\mathbf{H})=\{F\}$ and $\operatorname{Thin}(\mathbf{H})$ consists of $\partial M$.
Let $V$ and $W$ be disjoint compressing disks of $F$ from the opposite sides of $F$ such that ${F}_{VW}$ has no $2$-sphere component. 
(Lemma \ref{lemma-2-8} guarantees that ${F}_{VW}$ does not have a $2$-sphere component.)
Define
$$\operatorname{Thick}(\mathbf{G'})=(\operatorname{Thick}(\mathbf{H})-\{F\})\cup\{{F}_{V}, {F}_{W}\}, \text{ and}$$
$$\operatorname{Thin}(\mathbf{G'})=\operatorname{Thin}(\mathbf{H})\cup\{{F}_{VW}\},$$
where we assume that each element of $\operatorname{Thick}(\mathbf{G'})$ belongs to the interior of $\V$ or $\W$ by slightly pushing off $F_V$ or $F_W$ into the interior of $\V$ or $\W$ respectively and then also assume that they miss $F_{VW}$.
We say the GHS $\mathbf{\mathbf{G'}}=\{\operatorname{Thick}(\mathbf{G'}),\operatorname{Thin}(\mathbf{G'})\}$ is obtained from $\mathbf{H}$ by \textit{pre-weak reduction} along $(V,W)$.
The relative position of the elements of $\operatorname{Thick}(\mathbf{G'})$ and $\operatorname{Thin}(\mathbf{G'})$ follows the order described in Figure \ref{figure2}.
\begin{figure}
\includegraphics[width=10cm]{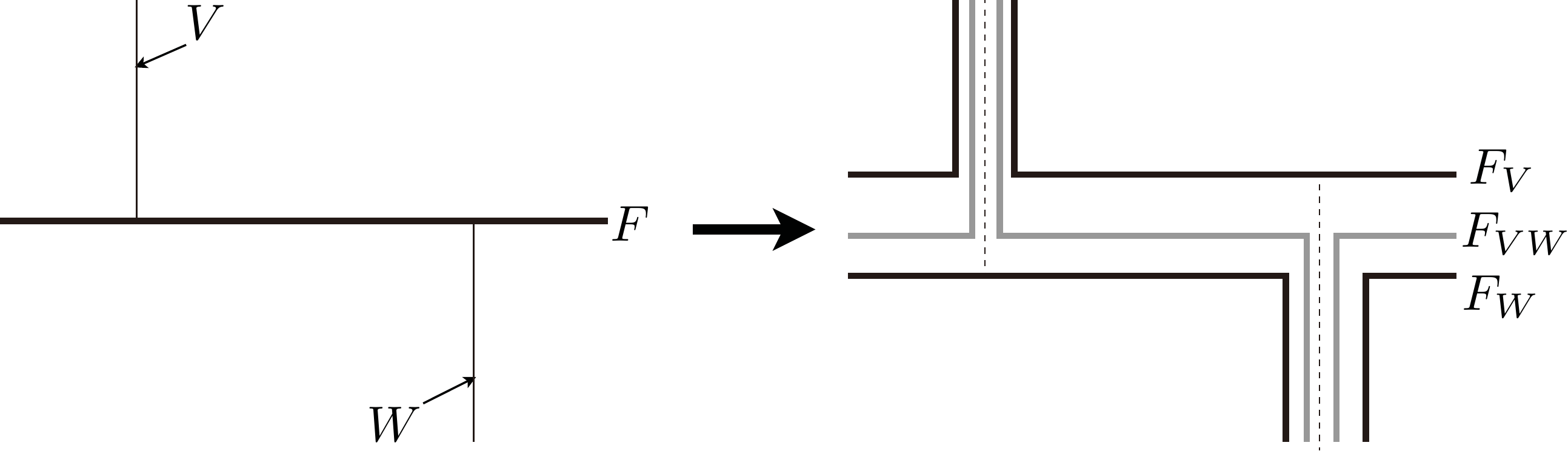}
\caption{pre-weak reduction \label{figure2}}
\end{figure}
If there are elements $S\in \operatorname{Thick}(\mathbf{G'})$ and $s\in \operatorname{Thin}(\mathbf{G'})$ that cobound a product region $P$ of $M$ such that $P\cap \operatorname{Thick}(\mathbf{G'})=S$ and $P\cap \operatorname{Thin}(\mathbf{G'})=s$ then remove $S$ from $\operatorname{Thick}(\mathbf{G'})$ and $s$ from $\operatorname{Thin}(\mathbf{G'})$.
This gives a clean GHS $\mathbf{G}$ of $M$ from the GHS $\mathbf{G'}$ (see Lemma 5.4 of \cite{Bachman2008}) and we say $\mathbf{G}$ is obtained from $\mathbf{G'}$ by \textit{cleaning}.
We say the clean GHS $\mathbf{G}$ of $M$ given by pre-weak reduction along $(V,W)$, followed by cleaning, is obtained from $\mathbf{H}$ by \textit{weak reduction} along $(V,W)$.
\end{definition}

The next is the definition of ``\textit{amalgamation}'' originated from \cite{Schultens1993}.
Since the original definition identifies the product structures near the relevant thin level into the thin level itself, the union of submanifolds after amalgamation is not exactly the same as the union before amalgamation setwisely.
Hence, we need to use another version of amalgamation.

\begin{definition} [The detailed version of ``\textit{partial amalgamation}'' of Section 3 of \cite{Lackenby2008} by using the terms in \cite{Schultens1993}]\label{def-amalgamation}
Let $N$ and $L$ be submanifolds of $M$ such that $N\cap L$ is a (possibly disconnected) closed surface $F$, where $F$ belongs to $\partial N$ and $\partial L$.
Suppose that $N$ and $L$ have non-trivial Heegaard splittings $(\V_1,\V_2;F_N)$ and $(\W_1,\W_2;F_L)$ respectively, where $\partial_-\V_2\cap \partial_-\W_1=F$.
Then we can represent $\V_2$ as the union of $\partial_-\V_2\times I$ and $1$-handles attached to $\partial_-\V_2\times \{1\}$ and the symmetric argument also holds for $\W_1$.
Especially, we can choose the product structures of the submanifolds $N_0= F\times I$ and $L_0= F\times I$ of $\partial_- \V_2\times I$ and $\partial_-\W_1\times I$ respectively (hence $N_0$ and $L_0$ share $F$ as the common $0$-level) such that the projections of attaching disks of the $1$-handles  defining $\V_2$ and $\W_1$ in the $1$-levels of $N_0$ and $L_0$ into $F$ would be mutually disjoint.
Let $\V_2=N_0\cup (\text{the }1\text{-handles})\cup N_1$ and $\W_1=L_0\cup(\text{the }1\text{-handles})\cup L_1$ ($N_1$ or $L_1$ might be empty).
Let $p_{N_0}:N_0\to F$ and $p_{L_0}:L_0\to F$ be the relevant projection functions defined in $N_0$ and $L_0$ respectively. 
Then we can extend the $1$-handles of $\V_2$ until we meet $F$ by using $p_{N_0}$ through $N_0$ and also we can extend those of $\W_1$ until we meet $F$ by using $p_{L_0}$ through $L_0$. 
Let $N_0'$ ($L_0'$ resp.) be the closure of the complement of the extended $1$-handles of $\V_2$ in $N_0$ ($\W_1$ in $L_0$ resp.).
Then we can see that $\V_1\cup N_0'$ is just expanded $\V_1$ vertically down through $N_0'$ and therefore it is a compression body and $\W_2\cup L_0'$ is also a compression body similarly.
If we define $\V=[\V_1\cup N_0']\cup [\text{the (possibly extended)}~~1\text{-handles of }\W_1]\cup L_1$ and $\W=[\W_2\cup L_0']\cup[\text{the (possibly extended)}~~1\text{-handles of }\V_2]\cup N_1$, then $(\V,\W)$ becomes a Heegaard splitting of $M$.
We call $(\V,\W)$ the \textit{amalgamation} of $(\V_1,\V_2)$ and $(\W_1,\W_2)$ along $F$ with respect to the given $1$-handle structures of $\V_2$ and $\W_1$ and the pair $(p_{N_0},p_{L_0})$ (see Figure \ref{fig-amalgamation}).
\begin{figure}
\includegraphics[width=10cm]{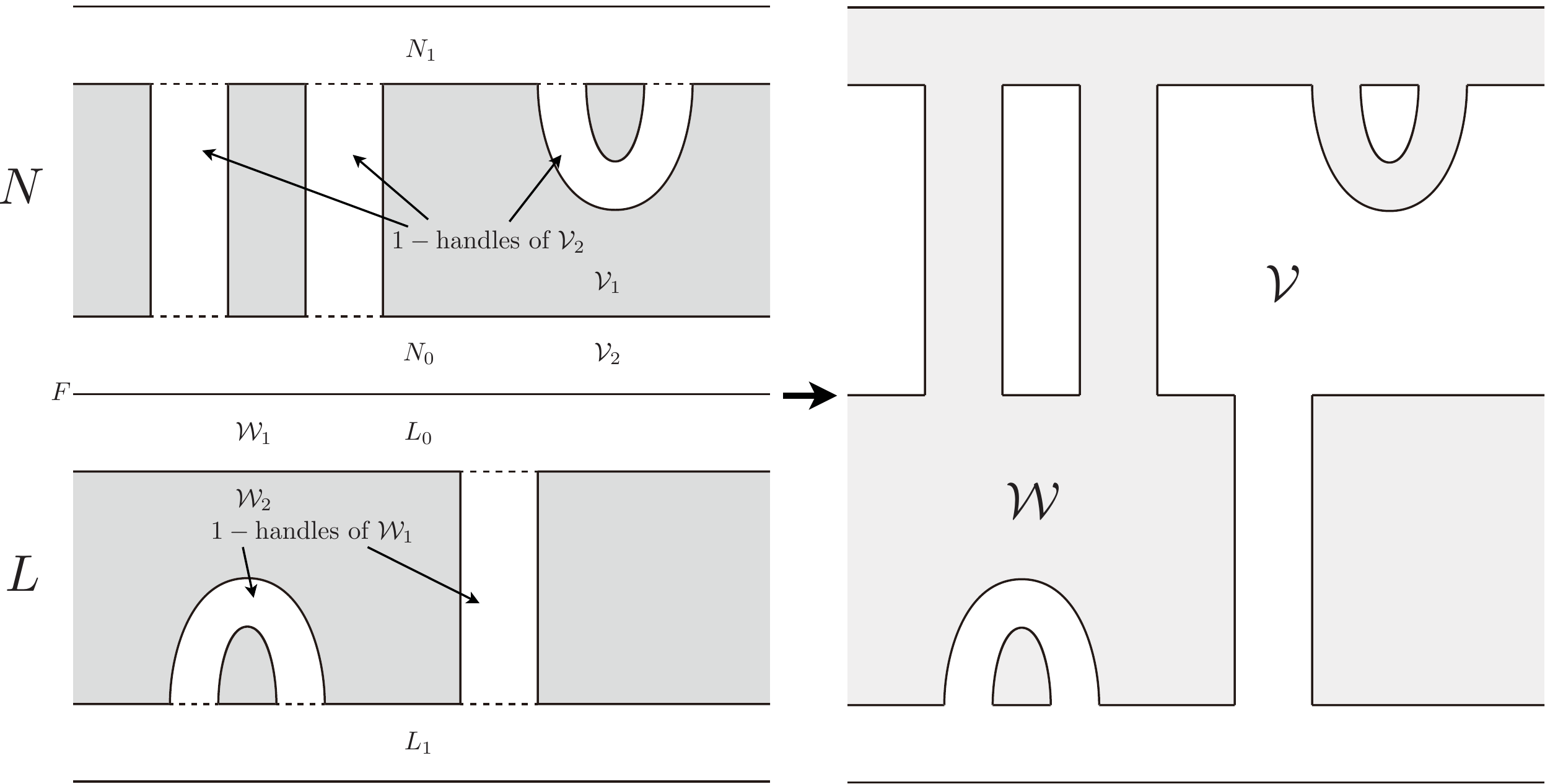}
\caption{the amalgamation of $(\V_1,\V_2)$ and $(\W_1,\W_2)$ along $F$ \label{fig-amalgamation}}
\end{figure}
\end{definition}

\begin{proposition}[Proposition 3.1 of \cite{Lackenby2008}]\label{lemma-Lackenby}
The amalgamation is well-defined up to ambient isotopy.
\end{proposition}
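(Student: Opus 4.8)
The plan is to verify that the Heegaard splitting $(\V,\W;F')$ output by Definition \ref{def-amalgamation} is unchanged, up to an ambient isotopy of $M$, when one varies any of the choices entering the construction: the $1$-handle structures of $\V_2$ and $\W_1$, the product structures identifying $N_0$ and $L_0$ with $F\times I$, the projections $p_{N_0}$ and $p_{L_0}$, and the way the two families of attaching disks are arranged to have disjoint projections to $F$. One should also check that replacing $(\V_1,\V_2;F_N)$ or $(\W_1,\W_2;F_L)$ by an isotopic Heegaard splitting yields an isotopic amalgamation; this reduces to the same analysis after pushing the whole picture along by an ambient isotopy of $N$ or $L$, which may be taken to fix $F$ setwise since $F\subseteq\partial_-\V_2\cap\partial_-\W_1$ is carried along by any isotopy of either splitting.

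First I would localize the dependence. Reading off the definition, the only way the $1$-handle data of $\V_2$ enters the pair $(\V,\W)$ is through the submanifold $H_N$ of $N$ consisting of the extended $1$-handles of $\V_2$ together with $N_1$ — equivalently, a closed regular neighborhood of a minimal spine of $\V_2$ (Definition \ref{def-spine}) drawn so that its valence-one vertices descend through $N_0$ to $F$; symmetrically, the data of $\W_1$ enters only through the analogous submanifold $H_L$ of $L$. The remaining ingredients $\V_1\cup N_0'$ and $\W_2\cup L_0'$ are then the closures of the complements of $H_N$ in $N$ and of $H_L$ in $L$, and $\V,\W$ are reassembled from these four pieces in a fixed combinatorial pattern. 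Hence it suffices to show: (a) $H_N$ is well-defined up to an ambient isotopy of $N$; (b) $H_L$ is well-defined up to an ambient isotopy of $L$; and (c) the restrictions to $F$ of the isotopies in (a) and (b) can be reconciled and the disjointness condition maintained, so that they glue to an ambient isotopy of $M=N\cup_F L$.

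For (a) I would combine two standard facts. Uniqueness of collars shows that the product structure on $N_0\cong F\times I$ and the vertical projection $p_{N_0}$ are each unique up to an isotopy fixing $F\times\{0\}=F$, so "extend the $1$-handles down through $N_0$" is canonical up to ambient isotopy once the $1$-handles are fixed. For the $1$-handle structure of $\V_2$: any two $1$-handle decompositions of the compression body $\V_2$ — equivalently, any two minimal spines of $\V_2$, which are disjoint from $\partial_+\V_2=F_N$ — are related by a finite sequence of isotopies and slides of one $1$-handle over another (equivalently, edge slides of the dual spine), and each such move is realized by an ambient isotopy of $\V_2$ carrying the regular neighborhood of one spine onto that of the other. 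Extending through $N_0$ then gives that $H_N$ is unique up to an ambient isotopy of $N$, which restricts to $F$ as some isotopy of that surface; the same argument over $\W_1\subset L$ gives (b). For (c), given two sets of choices one first arranges, possibly after a further isotopy of $N$ and of $L$ supported near $F$ (a finger move pushing one family of projected feet off the other), that the disjointness hypothesis holds for both; any two such arrangements differ by an isotopy of $F$. Then, composing the $N$-side ambient isotopy with one supported in a collar of $F$ in $N$ that realizes on $F$ the composite of the $L$-side isotopy with the inverse of the $N$-side isotopy — which leaves $H_N$ in place — makes the two isotopies agree on $F$, so they glue to an ambient isotopy $\phi$ of $M$. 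Tracking the four pieces, $\phi$ sends $H_N\mapsto\tilde H_N$, $H_L\mapsto\tilde H_L$, $\V_1\cup N_0'\mapsto\tilde\V_1\cup\tilde N_0'$ and $\W_2\cup L_0'\mapsto\tilde\W_2\cup\tilde L_0'$, hence $\V\mapsto\tilde\V$, $\W\mapsto\tilde\W$, $F'\mapsto\tilde F'$.

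I expect the crux to be the assertion in step (a) that slides of the spine of $\V_2$ are realized by ambient isotopies of $\V_2$ at the level of regular neighborhoods — that the amalgamation genuinely does not see the difference between slide-equivalent $1$-handle structures. Once that is in hand, everything else is collar-uniqueness bookkeeping; the one subtlety there is in step (c), where the isotopy near $F$ must be chosen with care, because $F'$ runs along both sides of $F$ and through $N_0$ and $L_0$, so a careless choice could fail to be a homeomorphism of $M$ or could destroy the product structures used to build $\V$ and $\W$.
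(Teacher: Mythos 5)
First, a point of reference: the paper supplies no proof of this statement --- it is quoted verbatim as Proposition 3.1 of \cite{Lackenby2008} and used as a black box (the paper explicitly says it keeps the detailed Definition \ref{def-amalgamation} \emph{despite} the existence of this proposition). So the only comparison available is with Lackenby's own argument, and your plan is essentially that argument: localize the dependence of $(\V,\W)$ to the two submanifolds $H_N$ and $H_L$ (your identification $\V=\operatorname{cl}(N-H_N)\cup H_L$, $\W=\operatorname{cl}(L-H_L)\cup H_N$ is correct and is the right way to organize the proof), dispose of the product structures and projections by uniqueness of collars, and reduce the remaining freedom to slide- and isotopy-equivalence of $1$-handle structures. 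Your step (c) is also the right shape, although the parenthetical ``which leaves $H_N$ in place'' is not literally true: the collar-supported correction moves the feet of whichever side it is applied to, so the glued isotopy carries $\V$ onto the second amalgamation only after one further collar-supported isotopy of $M$; one also needs to observe that the correction preserves disjointness of the two families of feet, which holds because it acts on $F$ by a single homeomorphism applied to two already-disjoint families of disks.

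The one genuinely load-bearing step is the one you flag yourself: that any two $1$-handle structures on $\V_2$ relative to $\partial_-\V_2\times I$ are related by isotopies and handle slides, \emph{and} that a handle slide is realized by an ambient isotopy of $\V_2$ at the level of the regular neighborhood $H_N$ (not merely at the level of the complement $\operatorname{cl}(\V_2-H_N)\cong\partial_+\V_2\times I$, which is what collar uniqueness gives directly). As written this is an assertion, not a proof, and it is exactly the content of Lackenby's proposition; so your proposal correctly reduces the statement to its crux but does not discharge it. It is worth noting that in the only situations where the present paper actually invokes the proposition (Lemma \ref{lemma-GHS-GHSS} and the proof of the Main Theorem), $\V_2$ and $\W_1$ are genus two compression bodies with non-empty minus boundary, so Lemma \ref{lemma-defining} applies: the minimal defining set is a single compressing disk, unique up to isotopy, hence there are no handle slides at all and $H_N$ is just a regular neighborhood of the unique cocore disk extended through the collar. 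In that restricted setting your step (a) becomes immediate and the whole argument closes up; for the general statement one must either supply the slide-invariance argument or, as the paper does, cite \cite{Lackenby2008}.
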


Despite of the existence of Proposition \ref{lemma-Lackenby}, we need the precise definition as in Definition \ref{def-amalgamation} since we will analyze the exact differences between representatives of generalized Heegaard splittings which induce the same amalgamation up to isotopy.\\

The following lemma means that the isotopy class of the generalized Heegaard splitting obtained by weak reduction along a weak reducing pair does not depend on the choice of the weak reducing pair if the weak reducing pair varies in a fixed $\V$- or $\W$-facial cluster.

\begin{lemma}[J. Kim, Lemma 2.17 of \cite{JungsooKim2014}]\label{lemma-2-17}
Assume $M$ and $F$ as in Lemma \ref{lemma-2-14}.
Every weak reducing pair in a $\V$-face gives the same generalized Heegaard splitting after weak reduction up to isotopy.
Therefore, every weak reducing pair in a $\V$-facial cluster gives the same generalized Heegaard splitting after weak reduction up to isotopy.
Moreover, the embedding of the thick level contained in $\V$ or $\W$ does not vary in the relevant compression body up to isotopy. 
\end{lemma}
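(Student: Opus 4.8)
The plan is to reduce the whole statement to a claim about a single $\V$-face and then to use connectivity of the graph of $\V$-faces. Recall that a $\V$-face is a $2$-simplex $\{V_0,V_1,W\}$ of $\DVW(F)$ with $V_0,V_1\subset\V$ and $W\subset\W$, whose only weak reducing pairs are $(V_0,W)$ and $(V_1,W)$ (the edge $\{V_0,V_1\}$ is not a weak reducing pair since both disks lie in $\V$). By Lemma \ref{lemma-2-9} (and the note following it, together with Definition \ref{definition-2-15}), after relabeling $\partial V_0$ is non-separating while $\partial V_1$ is separating and bounds a once-punctured torus $T\subset F$ in which $\partial V_0$ is a non-separating loop; equivalently, $V_1$ cuts off a solid torus $N\subset\V$ whose meridian disk is $V_0$, and $V_1$ is a band sum of two parallel copies of $V_0$. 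It therefore suffices to prove: \emph{weak reduction along $(V_0,W)$ and weak reduction along $(V_1,W)$ yield the same GHS up to isotopy, and moreover the $\V$-side (resp. $\W$-side) thick level lies in a single isotopy class of surfaces in $\V$ (resp. in $\W$), independent of which of the two weak reducing pairs is used.} Granting this, the facial-cluster statement follows at once, since any two $\V$-faces in a $\V$-facial cluster are joined by a chain of $\V$-faces in which consecutive members share a weak reducing pair.

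First I would identify the relevant surfaces. Since (as noted after the contractibility theorem) we may assume that no weak reducing pair here is a reducing pair, and since every essential simple closed curve in the once-punctured torus $T$ disjoint from $\partial V_0$ is isotopic in $T$ to $\partial V_0$ or to $\partial T$, the boundary $\partial W$ cannot lie in $T$; as $\partial W$ is disjoint from $\partial V_1=\partial T$, it lies in the once-punctured genus-two piece $\overline{F\setminus T}$. Consequently $F_{V_1}=T_1\sqcup S$, where $T_1$ is a torus ($T$ capped off by a scar of $V_1$) and $S=(\overline{F\setminus T})\cup D'$ is a genus-two surface ($D'$ the other scar). A short Euler-characteristic computation shows $F_{V_0}=(\overline{F\setminus T})\cup D$ for a single disk $D$; since $\partial T$ is inessential on $\partial N$, both $D$ and the scar $D'$ are boundary-parallel disks in the solid torus $N$, hence isotopic in $N$ rel boundary, so $F_{V_0}$ and $S$ agree in $\V$ up to an isotopy supported in $N$. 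Because $\partial W$ avoids $T=N\cap F$, this isotopy descends to an isotopy between $F_{V_0W}$ and $S_W$, the surface obtained by compressing $S$ along $W$; finally $F_W$ is literally the same surface for both weak reducing pairs.

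Next I would compare the two pre-weak-reduction GHSs of Definition \ref{definition-WR}. For $(V_0,W)$ one has $\Thick=\{F_{V_0},F_W\}$ with new inner thin level $F_{V_0W}$; for $(V_1,W)$ one has $\Thick=\{T_1,S,F_W\}$ with new inner thin level $F_{V_1W}=T_1\sqcup S_W$ (compressing along $W$ does not meet $T_1$). Thus the $(V_1,W)$ pre-weak-reduction GHS differs from the $(V_0,W)$ one only by one extra thick torus $T_1$ and one extra thin torus $T_1$, and from the positions prescribed in Definition \ref{definition-WR} (cf. Figure \ref{figure2}) these two parallel copies of $T_1$ cobound a product region $T_1\times I$ meeting $\Thick$ in the one copy and $\Thin$ in the other (the solid torus $N$ sits on the opposite side of the thick $T_1$ and does not intrude). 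Since the splitting is unstabilized of genus three, the $(V_0,W)$ pre-weak-reduction GHS is already clean — it consists of two non-trivial genus-two Heegaard splittings sharing a genus-one inner thin level (cf. \cite{JungsooKim2013}) — so the $T_1/T_1$ product is the unique obstruction to cleanness of the $(V_1,W)$ pre-weak-reduction GHS, and removing it (the single cleaning move) returns a GHS isotopic to the $(V_0,W)$ one by the surface identifications above. Hence the two weak reductions produce the same clean GHS; and since in it the $\V$-side thick level is $F_{V_0}$ (equivalently $S$) pushed into $\V$ and the $\W$-side thick level is $F_W$ pushed into $\W$, the isotopy classes of the thick levels in $\V$ and in $\W$ do not depend on the chosen weak reducing pair. (Alternatively, without invoking cleanness of the $(V_0,W)$ GHS, one cleans both GHSs and applies well-definedness of cleaning up to isotopy, Lemma 5.4 of \cite{Bachman2008}.)

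The main obstacle I anticipate is the pair of positional points: (i) verifying that the thick and thin copies of the torus $T_1$ genuinely cobound a product region whose intersections with $\Thick$ and $\Thin$ are exactly those two copies — this requires a careful reading of the precise positions of $F_V$, $F_W$ and $F_{VW}$ in the pre-weak-reduction GHS (Definition \ref{definition-WR}, Figure \ref{figure2}); and (ii) the bookkeeping guaranteeing that the isotopy identifying $F_{V_0}$ with $S$ can be taken supported in $N$, hence rel a neighborhood of $\partial W$, so that it survives compression along $W$ and remains an isotopy within $\V$. The abstract identifications of the surfaces ($F_{V_0}$ with $S$, and $F_{V_0W}$ with $S_W$) are routine Euler-characteristic and solid-torus arguments and are not where the difficulty lies.
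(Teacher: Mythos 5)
This lemma is only quoted in the present paper (it is imported from Lemma 2.17 of \cite{JungsooKim2014}, with no proof reproduced here), so there is no in-paper proof to compare against; judged on its own, your argument is correct and follows the natural route that the cited source takes: reduce to one $\V$-face via the chain structure of a facial cluster, identify the genus-two component of $F_{V_1}$ with $F_{V_0}$ by an isotopy supported in the solid torus $N$ (two boundary-parallel disks in $N$ with the same boundary), observe that $F_W$ and the $W$-compression are untouched by this isotopy, and note that the extra thick/thin pair of parallel tori $T_1$ is exactly what cleaning removes. The one inaccurate intermediate claim is that the $(V_0,W)$ pre-weak-reduction GHS is ``already clean'': when $W$ is separating in $F$ (cutting off a solid torus or $(\text{torus})\times I$ from $\W$), further cleaning is required on the $\W$-side as well; but since that $\W$-side cleaning is identical for both pre-weak-reduction GHSs, your parenthetical fallback (clean both and use well-definedness of cleaning) closes this correctly.
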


The next lemma gives an upper bound for the dimension of $\DVW(F)$ and restricts the shape of a $3$-simplex in $\DVW(F)$.

\begin{lemma}[J. Kim, Proposition 2.10 of \cite{JungsooKim2013}]\label{lemma-2-18}
Assume $M$ and $F$ as in Lemma \ref{lemma-2-14}.
Then $\operatorname{dim}(\DVW(F))\leq 3$.
Moreover, if $\operatorname{dim}(\DVW(F))=3$, then every $3$-simplex in $\DVW(F)$ must have the form $\{V_1, V_2, W_1, W_2\}$, where $V_1, V_2\subset \V$ and $W_1,W_2\subset \W$.
Indeed, $V_1$ ($W_1$ resp.) is non-separating in $\V$ (in $\W$ resp.) and $V_2$ ($W_2$ resp.) is a band sum of two parallel copies of $V_1$ in $\V$ ($W_1$ in $\W$ resp.).
\end{lemma}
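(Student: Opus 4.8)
The plan is to deduce everything from Lemma~\ref{lemma-2-9}, which already contains the essential geometric input. Start with an arbitrary simplex $\sigma$ of $\DVW(F)$. By the definition of $\D(F)$ we may choose pairwise disjoint, pairwise non-isotopic representatives of its vertices, and by the definition of $\DVW(F)$ at least one vertex lies in $\DV(F)$ and at least one in $\DW(F)$. Write the vertices as $V_1,\dots,V_m\subset\V$ and $W_1,\dots,W_n\subset\W$ with $m,n\ge 1$ (a $\V$-disk and a $\W$-disk can never coincide as vertices, since an ambient isotopy starting at the identity and fixing $F$ preserves the sides of $F$), so that $\dim\sigma=m+n-1$.

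First I would bound $m$ and $n$. If $m\ge 3$, then choosing any $W_j$ --- which exists because $n\ge 1$ --- gives three weak reducing pairs $(V_1,W_j)$, $(V_2,W_j)$, $(V_3,W_j)$ whose $\V$-disks are pairwise disjoint and pairwise non-isotopic in $\V$; this contradicts the ``moreover'' clause of Lemma~\ref{lemma-2-9}. Hence $m\le 2$, and by the symmetric argument $n\le 2$. Therefore $\dim\sigma=m+n-1\le 3$ for every simplex $\sigma$, which is the first assertion.

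For the normal form of a $3$-simplex, suppose $\dim(\DVW(F))=3$ and let $\sigma$ be a $3$-simplex. By the bound just proved, $m=n=2$, so $\sigma=\{V_1,V_2,W_1,W_2\}$ with $V_1,V_2\subset\V$, $W_1,W_2\subset\W$ and all four disks pairwise disjoint. Apply the first part of Lemma~\ref{lemma-2-9} to the disjoint triple $\{V_1,V_2,W_1\}$: since $V_1$ and $V_2$ are non-isotopic, one of $\partial V_1,\partial V_2$ bounds a punctured torus $T$ in $F$ and the other is a non-separating loop in $T$. Relabel so that $\partial V_2$ bounds $T$. By the remark following Lemma~\ref{lemma-2-9}, $V_2$ cuts off a solid torus from $\V$ and $V_1$ is a meridian disk of it, so $V_2$ is a band sum of two parallel copies of $V_1$ in $\V$; moreover cutting $\V$ along $V_1$ turns that solid torus into a $3$-ball and leaves $\V$ connected, so $V_1$ is non-separating in $\V$. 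Running the identical argument on the disjoint triple $\{W_1,W_2,V_1\}$ gives, after relabelling, that $W_1$ is non-separating in $\W$ and $W_2$ is a band sum of two parallel copies of $W_1$ in $\W$. This is exactly the asserted form.

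I expect the only genuinely delicate point to be the step ``$\partial V_1$ is a non-separating loop in $T$, hence $V_1$ is non-separating in $\V$'': this does not follow merely from $\partial V_1$ being non-separating in $F$, but it is immediate once one invokes the solid-torus-with-meridian-disk description recorded in the remark after Lemma~\ref{lemma-2-9}. Everything else is bookkeeping built on Lemma~\ref{lemma-2-9}, which does the real work.
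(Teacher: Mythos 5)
Your proof is correct. The paper only quotes this statement from Proposition 2.10 of \cite{JungsooKim2013} without reproducing a proof, and your derivation---bounding the number of disks on each side by the ``moreover'' clause of Lemma \ref{lemma-2-9}, then applying its first part to the triples $\{V_1,V_2,W_1\}$ and $\{W_1,W_2,V_1\}$ to extract the non-separating disk and the band sum of two parallel copies---is exactly the intended argument, and the one delicate step you flag (non-separation of $V_1$ in $\V$) is handled correctly via the meridian-disk description of the cut-off solid torus.
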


The next lemma characterizes the possible generalized Heegaard splittings obtained by weak reductions from $(\V,\W;F)$ into five types. 

\begin{lemma}[Lemma 3.1 of \cite{JungsooKim2014-2}]\label{lemma-four-GHSs}
Assume $M$ and $F$ as in Lemma \ref{lemma-2-14}.
Let $(\V_1,\V_2;\bar{F}_V)\cup_{\bar{F}_{VW}}(\W_1,\W_2;\bar{F}_W)$ be the generalized Heegaard splitting obtained by weak reduction along a weak reducing pair $(V,W)$ from the Heegaard splitting $(\V,\W;F)$, where $\partial_-\V_2\cap \partial_-\W_1=\bar{F}_{VW}$.
Then this generalized Heegaard splitting is one of the following five types (see Figure \ref{fig-Heegaard-a}).
\begin{enumerate}[(a)]
\item Each of $\partial_-\V_2$ and $\partial_-\W_1$ consists of a torus, where either\label{GHS-a}
	\begin{enumerate}[(i)]
	\item $V$ and $W$ are non-separating in $\V$ and $\W$ respectively and $\partial V\cup\partial W$ is also non-separating in $F$,
	\item $V$ cuts off a solid torus from $\V$ and $W$ is non-separating in $\W$,
	\item $W$ cuts off a solid torus from $\W$ and $V$ is non-separating in $\V$, or
	\item each of $V$ and $W$ cuts off a solid torus from $\V$ or $\W$.
	\end{enumerate}
	We call it a ``\textit{type (a) GHS}''.
\item One of $\partial_-\V_2$ and $\partial_-\W_1$ consists of a torus and the other consists of two tori, where either\label{lemma-3-1-b}
	\begin{enumerate}[(i)]
	\item $V$ cuts off $(\text{torus})\times I$ from $\V$ and $W$ is non-separating in $\W$,\label{lemma-3-1-b-i}
	\item $V$ cuts off $(\text{torus})\times I$ from $\V$ and $W$ cuts off a solid torus from $\W$,\label{lemma-3-1-b-ii}
	\item $W$ cuts off $(\text{torus})\times I$ from $\W$ and $V$ is non-separating in $\V$, or\label{lemma-3-1-b-iii}
	\item $W$ cuts off $(\text{torus})\times I$ from $\W$ and $V$ cuts off a solid torus from $\V$.\label{lemma-3-1-b-iv}
	\end{enumerate}	
	We call it a ``\textit{type (b)-$\W$ GHS}'' for (\ref{lemma-3-1-b-i}) and (\ref{lemma-3-1-b-ii}) and ``\textit{type (b)-$\V$ GHS}'' for (\ref{lemma-3-1-b-iii}) and (\ref{lemma-3-1-b-iv}).
\item Each of $\partial_-\V_2$ and $\partial_-\W_1$ consists of two tori but $\partial_-\V_2\cap \partial_-\W_1$ is a torus, where each of $V$ and $W$ cuts off $(\text{torus})\times I$ from $\V$ or $\W$.\label{lemma-3-1-c}
We call it a ``\textit{type (c) GHS}''.
\item Each of $\partial_-\V_2$ and $\partial_-\W_1$ consists of two tori and $\partial_-\V_2\cap \partial_-\W_1$ also consists of two tori, where both $V$ and $W$ are non-separating in $\V$ and $\W$ respectively but $\partial V\cup\partial W$ is separating in $F$.\label{lemma-3-1-d}
We call it a ``\textit{type (d) GHS}''.
\end{enumerate}
As the summary of the previous observations, the generalized Heegaard splitting $(\V_1,\V_2;\bar{F}_V)\cup_{\bar{F}_{VW}}(\W_1,\W_2;\bar{F}_W)$ is just a set of three surfaces $\{\bar{F}_V, \bar{F}_{VW}, \bar{F}_W\}$ obtained as the follows.
\begin{enumerate}
\item The thick level $\bar{F}_V$ ($\bar{F}_W$ resp.) is obtained by pushing the genus two component of $F_V$ ($F_W$ resp.) off into the interior of $\V$ (of $\W$ resp.) and \label{lemma-3-1-2nd-1}
\item The inner thin level $\bar{F}_{VW}$ is the union of components of $F_{VW}$ having scars of both $V$ and $W$,\label{lemma-3-1-2nd-2}
where we can see that if $\partial_-\V_2$ ($\partial_-\W_1$ resp.) has another component other than $\bar{F}_{VW}$, then it belongs to $\partial_-\W$ ($\partial_-\V$ resp.). 
\end{enumerate}
\end{lemma}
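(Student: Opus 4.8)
\noindent The plan is to pin down the topology of the three surfaces $F_V$, $F_W$, $F_{VW}$ by an Euler characteristic count, then run a finite case analysis over the isotopy types of $\partial V$ and $\partial W$ on the genus-three surface $F$ and over their relative position, and finally read off how the compression bodies $\V$, $\W$ get dissected in order to name the resulting GHS.

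First I would do the bookkeeping. Since $\chi(F)=-4$ and each compression raises Euler characteristic by $2$, we get $\chi(F_V)=\chi(F_W)=-2$ and $\chi(F_{VW})=0$; by Lemma \ref{lemma-2-8} — and because compressing a genus-three surface along a single essential disk produces no sphere — $F_{VW}$ has no $2$-sphere component, hence is a disjoint union of one, two, or three tori, while each of $F_V$, $F_W$ is either a connected genus-two surface (when the corresponding disk is non-separating, equivalently the disk is non-separating in its compression body) or a genus-two surface together with a torus (when it is separating, the only separating option on a genus-three surface being the $(1,2)$-split). In every case the genus-two component of $F_V$ (resp. $F_W$) is the one that becomes $\bar F_V$ (resp. $\bar F_W$) after pushing it into $\operatorname{int}\V$ (resp. $\operatorname{int}\W$), which gives statement (\ref{lemma-3-1-2nd-1}).

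Next I would enumerate the configurations of $\partial V\cup\partial W$ on $F$, using $\partial V\cap\partial W=\emptyset$, that neither curve bounds a disk in $F$, and — since $(V,W)$ is not a reducing pair — that $\partial V$ and $\partial W$ are not isotopic in $F$. So each curve is non-separating or separating; when, say, $\partial V$ separates $F$ into a once-punctured torus $X_1$ and a once-punctured genus-two surface $X_2$, the curve $\partial W$ lies in $X_1$ or in $X_2$ and is again non-separating or separating there, with the sub-case "$\partial W\subset X_1$" excluded at once by Lemma \ref{lemma-2-8} since capping $X_1$ cut along $\partial W$ would give a $2$-sphere carrying scars from both $V$ and $W$. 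For each surviving configuration I would compute how $F$ cut along $\partial V\cup\partial W$ looks, hence the number of torus components of $F_{VW}$, and single out those components carrying scars from both $V$ and $W$: their union is $\bar F_{VW}$, which is statement (\ref{lemma-3-1-2nd-2}); the remaining torus components of $F_{VW}$ will be shown to be parallel into $\partial_-\V$ or $\partial_-\W$ and to be removed by the cleaning step, yielding the final sentence of the lemma.

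The main obstacle is the last step, determining $\partial_-\V_2$ and $\partial_-\W_1$, i.e.\ how the compression bodies are genuinely cut: whenever a separating disk $V$ cuts off a torus component of $F_V$ one must decide whether, on the $\V$-side, that torus bounds a solid torus or a product $(\operatorname{torus})\times I$, and analogously for $W$. This is exactly where the hypothesis that $\V$ (and $\W$) has genus precisely three is used — so $\partial_-\V$ has genus at most two and the component of $\V$ cut off by $V$ can only be a solid torus or a product over a torus component of $\partial_-\V$, nothing more complicated — together with the unstabilized and irreducible hypotheses through Lemmas \ref{lemma-2-8} and \ref{lemma-2-9}, which also rule out the would-be configurations that do not appear in the list and force the ``extra'' component of $\partial_-\V_2$ (when present) to be an honest component of $\partial_-\W$. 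Combining the two sides: $\partial V$, $\partial W$ both non-separating with $\partial V\cup\partial W$ non-separating, or one curve cutting off a solid torus, or both cutting off solid tori, gives the single-torus minus boundaries of type (a) (sub-cases (i)--(iv)); exactly one of $V$, $W$ cutting off a product $(\operatorname{torus})\times I$ gives type (b)-$\V$ or (b)-$\W$; both cutting off such products gives type (c); and $\partial V$, $\partial W$ both non-separating with $\partial V\cup\partial W$ separating gives type (d). Finally, the relative positions of $\bar F_V$, $\bar F_{VW}$, $\bar F_W$ are forced by the ordering prescribed in Definition \ref{definition-WR} and Figure \ref{figure2}, which finishes the proof.
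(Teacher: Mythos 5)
This lemma is imported verbatim from Part I of the series (it is stated as Lemma 3.1 of \cite{JungsooKim2014-2}) and the present paper gives no proof of it, so there is no internal argument to compare against; I can only judge your outline on its own terms. On those terms it is the natural and essentially correct argument: the Euler characteristic count giving $\chi(F_{VW})=0$, the exclusion of sphere components (Lemma \ref{lemma-2-8} for components scarred by both disks, essentiality of $\partial V$ and $\partial W$ for components scarred by one), the separating/non-separating dichotomy on a genus three surface forcing the $(1,2)$-split, the use of Lemma \ref{lemma-2-8} to kill the nested configuration $\partial W\subset X_1$, and the observation that the piece cut off by a separating compressing disk is a genus one compression body, hence a solid torus or $(\text{torus})\times I$ --- this last dichotomy being exactly what distinguishes types (a)(ii)--(iv) from types (b) and (c). The matching of configurations to types (a)--(d) is also correct.

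One point is stated imprecisely and you should repair it before writing this up. You claim the torus components of $F_{VW}$ carrying scars of only one disk are ``parallel into $\partial_-\V$ or $\partial_-\W$'' and hence removed by cleaning. That is only true in the $(\text{torus})\times I$ subcases. When $V$ cuts off a \emph{solid torus} from $\V$ (types (a)(ii),(iv) and (b)(\ref{lemma-3-1-b-iv})), the corresponding torus component of $F_{VW}$ bounds a solid torus and is parallel to nothing in $\partial M$; it is nevertheless removed by cleaning, but for a different reason: that torus is simultaneously a component of the thick level $F_V$ (it is untouched by the $W$-compression), and after $F_V$ is pushed into $\operatorname{int}\V$ the thick torus and the thin torus cobound a product region, which is precisely the thick--thin pair that the cleaning step of Definition \ref{definition-WR} deletes. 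Separating these two mechanisms also makes the final sentence of the lemma come out correctly: the surviving extra component of $\partial_-\V_2$ (resp.\ $\partial_-\W_1$), when it exists, is an honest component of $\partial_-\W$ (resp.\ $\partial_-\V$) arising only in the $(\text{torus})\times I$ subcases, not a leftover piece of $F_{VW}$.
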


\begin{figure}
\includegraphics[width=12cm]{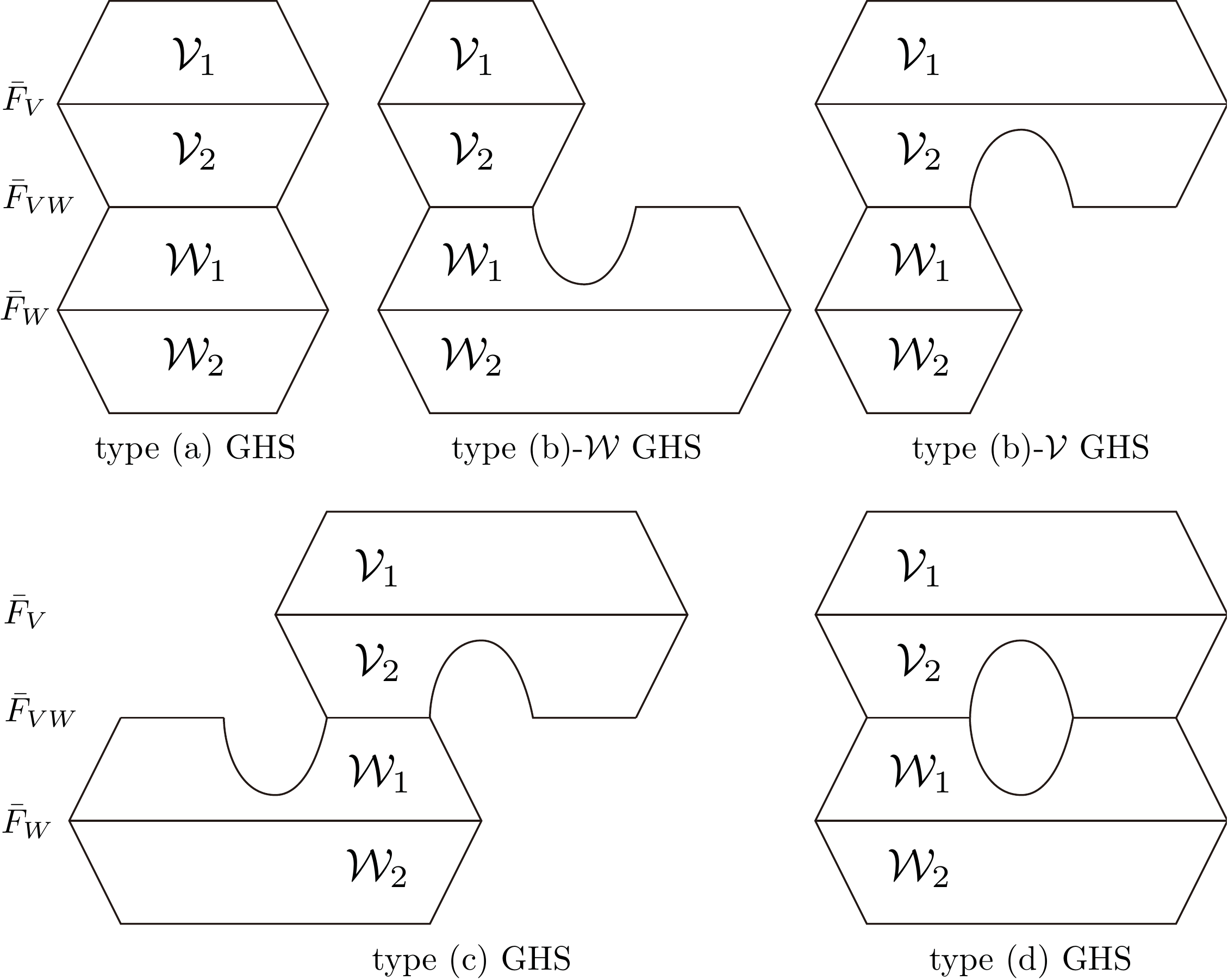}
\caption{the five types of generalized Heegaard splittings \label{fig-Heegaard-a}}
\end{figure}

From now on, we will use the notation $\{\bar{F}_V, \bar{F}_{VW}, \bar{F}_W\}$ as the generalized Heegaard splitting obtained by weak reduction from a weakly reducible, unstabilized Heegaard splitting $(\V,\W;F)$ of genus three along the weak reducing pair $(V,W)$.

Since every weak reducing pair in a $\V$- or $\W$-facial cluster $\varepsilon$ gives a unique generalized Heegaard splitting after weak reduction up to isotopy by Lemma \ref{lemma-2-17}, we can say $\varepsilon$ has a GHS of either type (a), type (b)-$\W$ or type (b)-$\V$  by Lemma \ref{lemma-four-GHSs} (we exclude the possibility that  $\varepsilon$ has a GHS of type (c) or type (d) by Lemma 3.7 of \cite{JungsooKim2014-2}).

In Definition \ref{definition-BB-type-a}, Definition \ref{definition-BB-type-b} and  Definition \ref{definition-BB-type-cd}, we will find a connected portion of $\DVW(F)$, say a ``\textit{building block}'' of $\DVW(F)$, such that every weak reducing pair in a building block gives the same generalized Heegaard splitting obtained by weak reduction up to isotopy.

\begin{definition}[Definition 3.3 of \cite{JungsooKim2014-2}]\label{definition-BB-type-a}
Assume $M$ and $F$ as in Lemma \ref{lemma-2-14}.
Let $\varepsilon_\V$ and  $\varepsilon_\W$ be a $\V$-facial cluster and a $\W$-facial cluster such that they share the common center $(\bar{V},\bar{W})$ (so $\bar{V}$ and $\bar{W}$ are non-separating in $\V$ and $\W$ respectively).
Let $\Sigma$ be the union of all simplices of $\DVW(F)$ spanned by the vertices of $\varepsilon_\V\cup\varepsilon_\W$.
Let $\Sigma_{V'W'}=\{V',\bar{V},\bar{W},W'\}$ be a $3$-simplex of $\DVW(F)$ containing $(\bar{V},\bar{W})$.
Then $\Sigma=\bigcup_{V',W'}\Sigma_{V'W'}$ for all possible $V'$ and $W'$ and therefore every weak reducing pair in $\Sigma$ gives the same generalized Heegaard splitting up to isotopy of type (a).
We call $\Sigma$ and $(\bar{V},\bar{W})$ a \textit{building block of $\DVW(F)$ having a type (a) GHS} and the \textit{center} of $\Sigma$ respectively.
\end{definition}

\begin{definition}[Definition 3.5 of \cite{JungsooKim2014-2}]\label{definition-BB-type-b}$\,$
\begin{enumerate}
\item \textit{A building block of $\DVW(F)$ having a type (b)-$\W$ GHS} is a $\W$-facial cluster having a type (b)-$\W$ GHS.
\item \textit{A building block of $\DVW(F)$ having a type (b)-$\V$ GHS} is a $\V$-facial cluster having a type (b)-$\V$ GHS.
\end{enumerate}
We define the \textit{center} of a building block of $\DVW(F)$ having a type (b)-$\W$ or (b)-$\V$ GHS as the center  of the corresponding $\W$- or $\V$-facial cluster.
\end{definition}

\begin{definition}\label{definition-BB-type-cd}
Assume $M$ and $F$ as in Lemma \ref{lemma-2-14} and let $(V,W)$ be a weak reducing pair.
Suppose that the generalized Heegaard splitting obtained by weak reduction along $(V,W)$ is a type (c) GHS (type (d) GHS resp.).
In this case, we call the weak reducing pair $(V,W)$ itself ``\textit{a building block of $\DVW(F)$ having a type (c) GHS} (\textit{type (d) GHS} resp.)''.
We define the \textit{center} of the building block $(\bar{V},\bar{W})$ as $(V,W)$ itself.
\end{definition}

Note that the embedding of the thick level contained in $\V$ or $\W$ does not vary in the relevant compression body up to isotopy if we do weak reduction along a weak reducing pair contained in a fixed building block by Lemma \ref{lemma-2-17}.

\begin{theorem}[Theorem 3.13 of \cite{JungsooKim2014-2}]\label{lemma-just-BB}
Assume $M$ and $F$ as in Lemma \ref{lemma-2-14}.
Then every component of $\DVW(F)$ is just a building block of $\DVW(F)$.
Hence, we can characterize the components of $\DVW(F)$ into five types. 
Moreover, there is a uniquely determined weak reducing pair in each component of $\DVW(F)$, i.e. the ``\textit{center}'' of the component.
\end{theorem}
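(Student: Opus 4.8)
The plan is to fix a connected component $\mathcal{C}$ of $\DVW(F)$, pick an arbitrary weak reducing pair $(V,W)\in\mathcal{C}$, perform weak reduction along it, and run a case analysis on the type of the resulting generalized Heegaard splitting, which by Lemma \ref{lemma-four-GHSs} is one of (a), (b)-$\V$, (b)-$\W$, (c) or (d); in each case the goal is to identify $\mathcal{C}$ with the building block claimed in the statement and to exhibit its center. The constraints I would lean on are: the separation data of $V$, $W$ and $\partial V\cup\partial W$ recorded in Lemma \ref{lemma-four-GHSs}, which are mutually incompatible across the five types; the obstruction of Lemma \ref{lemma-2-8}, that no $S^2$ obtained by compressing $F$ along disjoint disks may carry scars from both sides; the disjoint-disk dichotomy of Lemma \ref{lemma-2-9}; the shape $\{V_1,V_2,W_1,W_2\}$ of a $3$-simplex from Lemma \ref{lemma-2-18} (so that a $3$-simplex contains $\V$-faces and $\W$-faces); the fact that every $\V$- or $\W$-face lies in a facial cluster (Lemma \ref{lemma-2-16}); and the constancy of weak reduction on a facial cluster (Lemma \ref{lemma-2-17}). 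As a preliminary I would reconstruct Lemma 3.7 of \cite{JungsooKim2014-2}: a $\V$-facial cluster carries only a type (a) or type (b)-$\V$ GHS and a $\W$-facial cluster only a type (a) or type (b)-$\W$ GHS, because by Definition \ref{definition-2-15} and Lemma \ref{lemma-2-16} a $\V$-facial cluster contains both a non-separating $\V$-disk (its center) and a separating one (a hand), whereas in a type (b)-$\W$ or type (c) GHS every $\V$-disk is separating and in a type (d) GHS every $\V$-disk is non-separating.

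\emph{Types (c) and (d).} I would show that $(V,W)$ is then an isolated edge, so $\mathcal{C}=\{V,W\}$ is by definition a building block of type (c) (resp.\ (d)) with center $(V,W)$ itself. Suppose a compressing disk $D\not\simeq V,W$ were disjoint from $V$. If $D$ lies on the same side as $V$, then by Lemma \ref{lemma-2-9} one of $\partial V,\partial D$ bounds a punctured torus with the other non-separating inside it, and a case-check on the compression body cut off along the separating one --- a $(\text{torus})\times I$, which has incompressible boundary, or a solid torus, in which case $\{V,D,W\}$ is a face and $(V,W)$ lies in a facial cluster whose GHS, by Lemma \ref{lemma-2-17} and the preliminary, cannot be (c) or (d) --- gives a contradiction. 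If $D$ lies on the opposite side, then $\partial D$ cannot run essentially in the punctured torus cut off by $\partial V$ (or $\partial W$), since compressing $F$ first along $V$ (or $W$) and then along $D$ would produce an $S^2$ with scars from both sides, violating Lemma \ref{lemma-2-8}; with that ruled out, the symmetric form of Lemma \ref{lemma-2-9} and the same incompressibility force $D\simeq W$. Hence $V$ and $W$ have no further disjoint disks, the only simplex of $\DVW(F)$ through either is the edge $(V,W)$, and $\mathcal{C}=\{V,W\}$, whose center is evidently unique.

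\emph{Types (b)-$\W$ and (b)-$\V$.} Treat (b)-$\W$; (b)-$\V$ is symmetric. Here $V$ cuts off a $(\text{torus})\times I$ from $\V$. I would first show $(V,W)$ lies in a $\W$-face: by the explicit description of a type (b)-$\W$ GHS in Lemma \ref{lemma-four-GHSs}(\ref{lemma-3-1-2nd-1})--(\ref{lemma-3-1-2nd-2}) and by Lemma \ref{lemma-defining}, the genus-two thick level $\bar F_W\subset\W$ bounds a genus-two compression body with a torus minus-boundary whose unique non-separating compressing disk uncompresses to a $\W$-disk $W'$ of $F$ disjoint from $V$ and $W$ and again of type (b)-$\W$, so $\{V,W,W'\}$ is a $\W$-face; thus $(V,W)$ lies in a $\W$-facial cluster $\varepsilon_\W$, which by Lemma \ref{lemma-2-17} has a type (b)-$\W$ GHS and so is a building block of that type. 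It remains to prove $\mathcal{C}=\varepsilon_\W$: a $\W$-face meeting a pair of $\varepsilon_\W$ lies in $\varepsilon_\W$ by maximality; a $\V$-face meeting such a pair would put it in a $\V$-facial cluster with a type (a) or (b)-$\V$ GHS, impossible; and no $3$-simplex meets $\varepsilon_\W$ by Lemma \ref{lemma-2-18}, since the common $\V$-disk of $\varepsilon_\W$ cuts off a $(\text{torus})\times I$ and hence is neither non-separating nor a band sum of two parallel copies of a non-separating disk. The center of $\mathcal{C}$ is then the center of $\varepsilon_\W$, unique by the $\W$-version of Lemma \ref{lemma-2-14}.

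\emph{Type (a), and the main obstacle.} Here I would use Lemma \ref{lemma-2-9} and Lemma \ref{lemma-2-14} to locate in the $\V$- and $\W$-facial clusters through $(V,W)$ a common center $(\bar V,\bar W)$ with $\bar V,\bar W$ both non-separating, and invoke Definition \ref{definition-BB-type-a} to assemble the type (a) building block $\Sigma$ as the union of all simplices spanned by $\varepsilon_\V\cup\varepsilon_\W$. The substantive claim is $\mathcal{C}=\Sigma$: by constancy of weak reduction on a facial cluster every $\V$-face of $\mathcal{C}$ lies in $\varepsilon_\V$ and every $\W$-face in $\varepsilon_\W$; by Lemma \ref{lemma-2-18} every $3$-simplex of $\mathcal{C}$ has its unique non-separating $\V$- and $\W$-disks equal to $\bar V$ and $\bar W$ and is therefore of the form $\{V',\bar V,\bar W,W'\}\subset\Sigma$; conversely every simplex of $\Sigma$ is in $\mathcal{C}$; and the center is recovered as the unique weak reducing pair of $\mathcal{C}$ with both disks non-separating, hence is unique. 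I expect the hardest part to be precisely this non-leaking statement $\mathcal{C}=\Sigma$: one must show that from a type (a) pair no disjoint disk, and no pair reachable through a chain of faces, escapes the single $\Sigma$ cut out by $(\bar V,\bar W)$ --- equivalently that two distinct type (a) building blocks are never joined by a path in $\DVW(F)$ --- which is where the obstruction of Lemma \ref{lemma-2-8}, the dichotomy and band-sum consequence of Lemma \ref{lemma-2-9}, and Lemma \ref{lemma-2-18} must be combined with the most care, the same bookkeeping recurring (more easily) in the (b) and (c)/(d) cases. A genuine secondary point, used above, is that a type (b) pair automatically lies in a face of the matching kind; without it an isolated type (b) edge would be a component that is not a building block, so that step truly needs the geometry of the $(\text{torus})\times I$ summand. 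Once $\mathcal{C}=\Sigma$ and its analogues are established, the five-fold classification and the uniqueness of the center follow at once, completing the proof.
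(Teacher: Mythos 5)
Before the mathematics, one structural remark: the present paper does not prove this statement at all --- it is quoted wholesale as Theorem 3.13 of \cite{JungsooKim2014-2} (the ``Structure Theorem'' paper) and used as a black box in Sections \ref{section3} and \ref{section4}. So there is no in-paper proof to compare yours against; I can only measure your proposal against the lemmas reproduced in Section \ref{section2}, and against what the theorem actually asserts.

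Measured that way, your skeleton (classify the GHS of one pair $(V,W)$ in a component $\mathcal{C}$ via Lemma \ref{lemma-four-GHSs}, then match $\mathcal{C}$ to the corresponding building block) is the right shape, and you correctly identify the ``non-leaking'' statement as the crux --- but the proposal does not close it, and the tool you invoke cannot. A component of $\DVW(F)$ grows whenever two simplices share even a single vertex, so to pin $\mathcal{C}$ down you must control every $\W$-disk $D$ disjoint from $V$ \emph{with no disjointness from $W$ assumed} (and symmetrically), since any such $D$ contributes an edge $\{V,D\}$ of $\DVW(F)$ meeting $\{V,W\}$ in the vertex $V$. Lemma \ref{lemma-2-9} is inapplicable to such a $D$: it requires three mutually disjoint disks, two of them on the same side, whereas your $D$ may intersect $W$. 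Concretely, in the type (c)/(d) case your step ``the symmetric form of Lemma \ref{lemma-2-9} \dots forces $D\simeq W$'' silently presupposes $D\cap W=\emptyset$; in the type (b)-$\W$ case a $\W$-face or a $3$-simplex can a priori attach to $\varepsilon_\W$ along a single $\W$-disk rather than along a whole weak reducing pair, which neither ``maximality'' of the facial cluster (whose graph is built from shared \emph{pairs}, not shared vertices) nor Lemma \ref{lemma-2-18} excludes; and in type (a) the same issue recurs for edges $\{\bar V,D\}$ with $D\cap\bar W\neq\emptyset$. Ruling out these single-vertex attachments is precisely the content of the Structure Theorem, and it requires arguments not present in the proposal (for instance, incompressibility of the torus formed by a separating disk together with the once-punctured torus it cuts off, combined with Lemma \ref{lemma-2-8} applied to further compressions, to constrain where $\partial D$ can lie). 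As written, the proposal is an accurate roadmap with the hardest legs of the journey left untraveled.
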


By Theorem \ref{lemma-just-BB}, we can say that a component of $\DVW(F)$ has a GHS of either type (a), type (b)-$\W$, type (b)-$\V$, type (c) or type (d).
Moreover, we define the \textit{center} of a component of $\DVW(F)$ as the center of the corresponding building block of $\DVW(F)$.
We can refer to Figure \ref{figure1} for the shapes of the components of $\DVW(F)$.
\begin{figure}
\includegraphics[width=12cm]{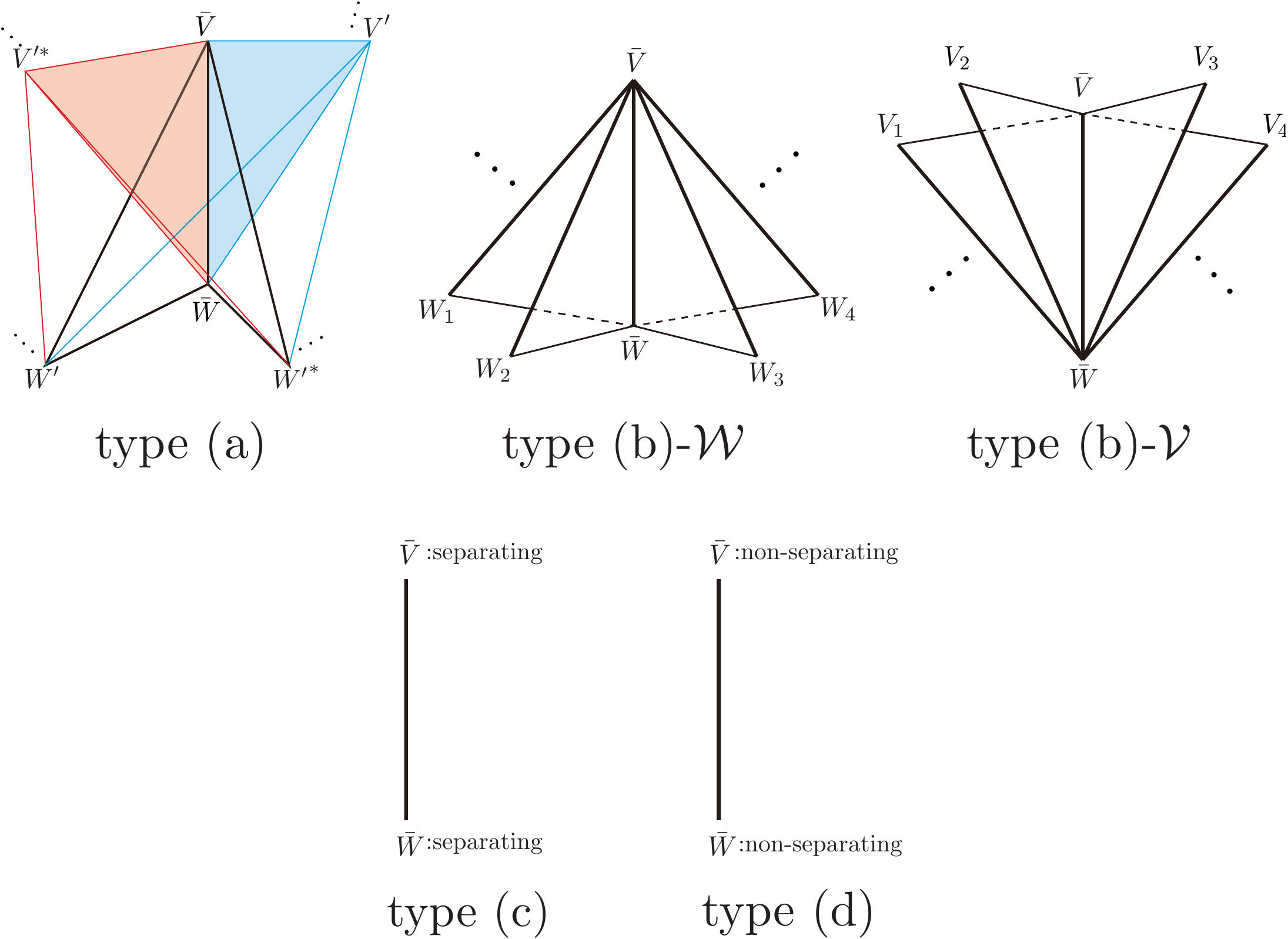}
\caption{the five types of components of $\DVW(F)$\label{figure1}}
\end{figure}

The next lemma determines all centers of components of $\DVW(F)$.

\begin{lemma}[Lemma 3.14 of \cite{JungsooKim2014-2}]\label{lemma-character-BB}
Assume $M$ and $F$ as in Lemma \ref{lemma-2-14}.
A weak reducing pair $(V,W)$ of $(\V,\W;F)$ is the center of a component of $\DVW(F)$ if and only if each of $V$ and $W$ does not cut off a solid torus from the relevant compression body.
Moreover, a compressing disk in a weak reducing pair belongs to the center of a component of $\DVW(F)$ if and only if it does not cut off a solid torus from the relevant compression body.
\end{lemma}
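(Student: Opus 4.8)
The plan is to reduce everything to the classification of the components of $\DVW(F)$ furnished by Theorem \ref{lemma-just-BB}: each component is a building block of exactly one of the five types (a), (b)-$\W$, (b)-$\V$, (c), (d), and each carries a unique center. I would first prove the ``moreover'' part — the statement at the level of a single disk — and then deduce the statement at the level of a weak reducing pair as an immediate corollary. So fix a weak reducing pair $(V,W)$, let $B$ be the component of $\DVW(F)$ containing it, and record, type by type, which of the vertices of $B$ cut off a solid torus from the relevant compression body.

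For the case analysis I would use two facts already available. First, by Lemma \ref{lemma-2-14} and Definition \ref{definition-2-15} (together with the remark following Lemma \ref{lemma-2-9}), the disk in any \emph{hand} of a $\V$- or $\W$-facial cluster is a band sum of two parallel copies of the corresponding non-separating center disk and hence cuts off a solid torus, whereas a disk in the center of such a cluster is either non-separating or, in the case of the center of a type (b) building block, cuts off a copy of $(\mathrm{torus})\times I$ by Lemma \ref{lemma-four-GHSs} — and in neither case cuts off a solid torus. Second, by Lemma \ref{lemma-2-18} every $3$-simplex of $\DVW(F)$ has the form $\{V_1,V_2,W_1,W_2\}$ with $V_2$ a band sum of two parallel copies of the non-separating disk $V_1$ (and symmetrically on the $\W$-side), so in a type (a) building block $\Sigma$ — which by Definition \ref{definition-BB-type-a} is the union of all $3$-simplices through its non-separating center $(\bar V,\bar W)$ — the $\V$-vertices are exactly $\bar V$ together with disks that are band sums of two copies of $\bar V$, and likewise on the $\W$-side. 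Running through the five types: in type (a) the vertices that do not cut off a solid torus are precisely $\bar V$ and $\bar W$, i.e. the two disks of the center; in type (b)-$\W$ the $\V$-vertex is the single common disk $V_0$ (which cuts off $(\mathrm{torus})\times I$, not a solid torus, by Lemma \ref{lemma-four-GHSs}), the center is $(V_0,\bar W)$ with $\bar W$ non-separating, and every other ($\W$-)vertex is a hand disk cutting off a solid torus — so again the non-solid-torus vertices are exactly the two disks of the center; type (b)-$\V$ is symmetric; in types (c) and (d) the whole block is the single pair $(V,W)$, which is its own center, and by Lemma \ref{lemma-four-GHSs} each of $V,W$ is either non-separating (type (d)) or cuts off $(\mathrm{torus})\times I$ (type (c)), so neither cuts off a solid torus. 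In every case, then, a vertex of $B$ fails to cut off a solid torus if and only if it is one of the two disks of the center of $B$; this is exactly the ``moreover'' part.

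The first statement now follows: $(V,W)$ is the center of $B$ if and only if both $V$ and $W$ lie on the center of $B$, which by the ``moreover'' part happens if and only if neither $V$ nor $W$ cuts off a solid torus (here one uses that the center is a single weak reducing pair whose $\V$-disk lies in $\V$ and whose $\W$-disk lies in $\W$, so ``$V$ lies on the center and $W$ lies on the center'' forces $(V,W)$ to be the center). I expect the only delicate point to be the bookkeeping: one must keep the distinction between ``cuts off a solid torus'' and ``cuts off $(\mathrm{torus})\times I$'' throughout — these behave identically on the genus side but differ on $\partial_-$ — and one must be sure that the enumeration of the vertices of each building block is complete, which is where Lemma \ref{lemma-2-18} (the shape of $3$-simplices) and the facial-cluster structure of Lemma \ref{lemma-2-14} do the real work.
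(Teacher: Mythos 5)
This lemma is one the paper imports verbatim (as Lemma 3.14 of \cite{JungsooKim2014-2}) and does not prove here, so there is no in-paper argument to compare yours against; I can only assess your reconstruction on its own terms, using the machinery the paper does quote. On those terms your argument is sound and is almost certainly the intended one: Theorem \ref{lemma-just-BB} reduces everything to the five types of building blocks, and your vertex-by-vertex census is complete and correct in each case --- in type (a) the non-center $\V$-vertices are the hands of $\varepsilon_\V$ (band sums of two parallel copies of $\bar V$, hence bounding solid tori, by Definition \ref{definition-2-15} and the remark after Lemma \ref{lemma-2-9}), and symmetrically on the $\W$-side; in type (b)-$\W$ the common $\V$-disk cuts off $(\text{torus})\times I$ and the non-center $\W$-disks are again hands; types (c) and (d) are singletons whose disks are non-separating or cut off $(\text{torus})\times I$ by Lemma \ref{lemma-four-GHSs}. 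Your two finishing observations are also the right ones: a disk determines its component (all simplices through a fixed vertex are connected through that vertex), and the center is a single pair with one disk on each side, so ``both disks lie on the center'' does force $(V,W)$ to equal the center. The one point worth stating explicitly rather than leaving implicit is that a $\W$-facial cluster which constitutes an entire component is, by Theorem \ref{lemma-just-BB} and Definition \ref{definition-BB-type-b}, necessarily of type (b)-$\W$, so its common $\V$-disk cannot cut off a solid torus; that is exactly where the Structure Theorem, and not just the facial-cluster lemmas, is doing essential work.
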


The next theorem means that different components of $\DVW(F)$ give different ways to embed the thick levels of the generalized Heegaard splittings obtained by weak reductions in the relevant compression bodies.

\begin{theorem}[Theorem 1.2 of \cite{JungsooKim2014-2}]\label{theorem-structure}
Let $(\V,\W;F)$ be a weakly reducible, unstabilized, genus three Heegaard splitting in an orientable, irreducible $3$-manifold $M$.
Then there is a function from the components of $\DVW(F)$ to the isotopy classes of the generalized Heegaard splittings obtained by weak reductions from $(\V,\W;F)$.
The number of components of the preimage of an isotopy class of this function is the number of ways to embed the thick level contained in $\V$ into $\V$ (or in $\W$ into $\W$).
This means that if we consider a generalized Heegaard splitting $\mathbf{H}$ obtained by weak reduction from $(\V,\W;F)$, then the way to embed the thick level of $\mathbf{H}$ contained in $\V$ into $\V$ determines the way to embed the thick level of $\mathbf{H}$ contained in $\W$ into $\W$ up to isotopy and vise versa.
\end{theorem}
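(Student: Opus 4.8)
The plan is to construct the function by hand, refine it into an invariant that also remembers how the $\V$-side thick level sits inside $\V$, prove the refined invariant realizes the asserted bijection, and then invoke the $\V\leftrightarrow\W$ symmetry for the last sentence of the statement.

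Given a component $C$ of $\DVW(F)$, choose any weak reducing pair $(V,W)$ in $C$ and let the value of the function be the isotopy class of the generalized Heegaard splitting $\{\bar F_V,\bar F_{VW},\bar F_W\}$ obtained by weak reduction along $(V,W)$. By Theorem \ref{lemma-just-BB} each component is a building block, and by Lemma \ref{lemma-2-17} all weak reducing pairs inside a single building block give the same generalized Heegaard splitting up to isotopy, so the function is well defined; call it $\Phi$. By the last sentence of Lemma \ref{lemma-2-17} the isotopy class \emph{inside $\V$} of the embedded thick level $\bar F_V$ is also constant on $C$, and similarly for $\bar F_W$ inside $\W$; write $\iota_\V(C)$ and $\iota_\W(C)$ for these. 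Everything will follow once we show that for each $[\mathbf H]$ in the image of $\Phi$ the assignment $C\mapsto\iota_\V(C)$ is a bijection from $\Phi^{-1}([\mathbf H])$ onto the isotopy classes of embeddings of $\bar F_V$ into $\V$ realizing $[\mathbf H]$.

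The heart of this is a recovery step powered by Lemma \ref{lemma-defining}. Since $\bar F_V$ is always the genus-two piece of $F_V$ (Lemma \ref{lemma-four-GHSs}, Definition \ref{definition-2-15}), cutting $\V$ along an embedded copy $j(\bar F_V)$ with $j=\iota_\V(C)$ produces, on the side containing $\partial_+\V=F$, a genus-three compression body having the genus-two surface $j(\bar F_V)$ as a component of its minus boundary; by Lemma \ref{lemma-defining} this compression body has a unique minimal defining disk, well defined up to isotopy in $\V$, and that disk is precisely the $\V$-disk of the center of $C$ — the disk along which $F$ was compressed to build $\bar F_V$. Symmetrically, $\iota_\W(C)$ recovers the $\W$-disk of the center of $C$. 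Surjectivity of $C\mapsto\iota_\V(C)$ onto the relevant embeddings is then the reverse construction: from an embedding $j$ extract the disk $V_j$ as above, complete it to a weak reducing pair $(V_j,W)$ — here $F_{V_jW}$ has no $2$-sphere component by Lemma \ref{lemma-2-8} — whose weak reduction is $[\mathbf H]$ and whose $\V$-side thick level embeds as $j$, and take the component of $\DVW(F)$ containing $(V_j,W)$.

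For injectivity, if $C_1,C_2\in\Phi^{-1}([\mathbf H])$ have $\iota_\V(C_1)=\iota_\V(C_2)$, then their centers share one $\V$-disk $V$ up to isotopy, and it remains to see that the two $\W$-disks coincide; then the centers agree and, by the uniqueness of the center in Theorem \ref{lemma-just-BB}, $C_1=C_2$. I expect this to be the main obstacle, because the isotopy class $[\mathbf H]$ by itself does \emph{not} determine the embedding of $\bar F_W$ in $\W$ — that non-uniqueness is exactly what the theorem counts — so one must genuinely exploit the fact that $\bar F_V$ has been frozen inside $\V$ and that the amalgamation of $\mathbf H$ is forced to be the fixed splitting $(\V,\W;F)$ (Proposition \ref{lemma-Lackenby}). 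Concretely, one tracks the inner thin level $\bar F_{VW}$ and the disjointness $\partial V\cap\partial W=\emptyset$ through the cases of Lemma \ref{lemma-four-GHSs}, using Lemma \ref{lemma-character-BB} (neither center disk cuts off a solid torus) and the band-sum description in Lemma \ref{lemma-2-9} to exclude two non-isotopic $\W$-disks completing $V$ to a center producing the same generalized Heegaard splitting. Granting this, $|\Phi^{-1}([\mathbf H])|$ equals the number of ways to embed $\bar F_V$ into $\V$; running the identical argument with $\V$ and $\W$ exchanged gives a bijection of the same fibre onto the ways to embed $\bar F_W$ into $\W$, and composing the two bijections over $\Phi^{-1}([\mathbf H])$ yields the equality of the two counts together with the final assertion that an embedding of $\bar F_V$ into $\V$ determines a unique component, hence a unique embedding of $\bar F_W$ into $\W$, and conversely.
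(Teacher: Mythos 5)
This theorem is not proved in the present paper: it is quoted verbatim from Theorem 1.2 of \cite{JungsooKim2014-2} (paper I of the series, where it is the main ``Structure Theorem''), so there is no in-paper argument to compare yours against. Judged on its own terms, your architecture is sound, and your central idea is the right one: the recovery step via Lemma \ref{lemma-defining} is correct. Cutting $\V$ along the embedded thick level $j(\bar F_V)$ does yield, on the $F$-side, a genus-three compression body whose minus boundary contains the genus-two surface $j(\bar F_V)$ (plus a torus of $\partial_-\V$ exactly when the center disk cuts off $(\text{torus})\times I$), and in both cases of Lemma \ref{lemma-defining} the unique minimal defining disk is the $\V$-disk of the center --- note this uses Lemma \ref{lemma-character-BB}: for a non-center pair whose $\V$-disk cuts off a solid torus, the recovered disk would instead be the meridian, i.e.\ again the center disk, which is consistent with Lemma \ref{lemma-2-17}.

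The one place you leave a genuine hole --- the injectivity step, which you call ``the main obstacle'' and ultimately only grant --- is in fact the easiest part, and your proposed route through a case analysis of Lemma \ref{lemma-four-GHSs} and Lemma \ref{lemma-2-9} is unnecessary (and would be hard to push through, since the two $\W$-disks need not be disjoint from each other, so Lemma \ref{lemma-2-9} does not apply to them directly). Once the recovery step shows that $\iota_\V(C)$ determines the center's $\V$-disk up to isotopy in $\V$, hence up to isotopy preserving $F$, i.e.\ as a vertex $[V]$ of $\D(F)$, two components $C_1$ and $C_2$ with $\iota_\V(C_1)=\iota_\V(C_2)$ have centers $\{[V],[W_1]\}$ and $\{[V],[W_2]\}$ that are simplices of $\DVW(F)$ sharing the vertex $[V]$; simplices sharing a vertex lie in the same connected component, so $C_1=C_2$ outright. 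You never need to show $[W_1]=[W_2]$ first, and you do not even need the hypothesis that the two components give isotopic generalized Heegaard splittings: $\iota_\V$ is injective on all of the set of components. A smaller point: your surjectivity step speaks of ``completing'' a recovered disk $V_j$ to a weak reducing pair, which would require an existence argument; under the only sensible reading of ``ways to embed the thick level'' (namely, embeddings arising from some weak reduction yielding $[\mathbf H]$), the completing disk is already supplied by the hypothesis and surjectivity is immediate from the constancy of $\iota_\V$ on each component (Lemma \ref{lemma-2-17}). With these two repairs your proof closes up completely.
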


Let $(\V_i,\W_i;F_i)$ be a weakly reducible, unstabilized Heegaard splitting of genus three in an irreducible $3$-manifold $M$ for $i=1,2$ and $f$ an orientation preserving automorphism of $M$ that takes $F_1$ into $F_2$.
Let $D$ be a compressing disk of $F_1$.
Then we can well-define the map sending the isotopy class $[D]\in \D(F_1)$ into $[f(D)]\in \D(F_2)$ and we can see that this gives a bijection between the set of vertices of $\D(F_1)$ and that of $\D(F_2)$, where we denote this map as $f:\D(F_1)\to \D(F_2)$ by using the same function name $f$ (we will denote this map as $f_\ast$ rigorously in Definition \ref{def-induce}).
The next lemma says that $f$ sends the center of a component of $\D_{\V_1 \W_1}(F_1)$ into the center of a component of $\D_{\V_2 \W_2}(F_2)$ (the proof is essentially the same as that of Lemma 3.1 of \cite{JungsooKim2015}).

\begin{lemma}\label{lemma-center-center}
Suppose that $M$ is an orientable, irreducible $3$-manifold and $(\V_i,\W_i;F_i)$ is a weakly reducible, unstabilized, genus three Heegaard splitting of $M$ for $i=1,2$.
Let $f$ be an orientation preserving automorphism of $M$ that takes $F_1$ into $F_2$.
Then $f$ sends the center of a component of $\D_{\V_1 \W_1}(F_1)$ into the center of a component of $\D_{\V_2 \W_2}(F_2)$.
\end{lemma}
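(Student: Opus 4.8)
The statement is purely about the combinatorics and topology of disk complexes, so the strategy is to reduce it to Lemma \ref{lemma-character-BB}, which characterizes centers intrinsically: a weak reducing pair $(V,W)$ of $(\V_i,\W_i;F_i)$ is the center of a component of $\D_{\V_i\W_i}(F_i)$ if and only if neither $V$ nor $W$ cuts off a solid torus from the relevant compression body, and likewise a single disk lies in such a center if and only if it does not cut off a solid torus. So the whole proof amounts to showing that ``$D$ cuts off a solid torus from the handlebody on the appropriate side'' is a property preserved by the homeomorphism $f$ in both directions. First I would record that since $f:M\to M$ is an orientation-preserving automorphism taking $F_1$ into $F_2$ (up to isotopy we may assume $f(F_1)=F_2$), $f$ restricts to a homeomorphism of the two sides; by the usual uncrossing/innermost-disk argument together with the fact that a weak reducing pair is characterized by disjointness, $f$ maps weak reducing pairs of $(\V_1,\W_1;F_1)$ to weak reducing pairs of $(\V_2,\W_2;F_2)$ — possibly swapping the labels $\V$ and $\W$, which is harmless since the statement is symmetric in the two compression bodies.

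\textbf{Key steps.} Step 1: Normalize so that $f(F_1)=F_2$, hence $f$ carries the compression body on one side of $F_1$ homeomorphically onto one of the two compression bodies bounded by $F_2$, say $f(\V_1)=\V_2$ and $f(\W_1)=\W_2$ (or with $\V,\W$ interchanged). Step 2: For a compressing disk $D\subset\V_1$, observe that $D$ cuts off a solid torus from $\V_1$ if and only if $f(D)$ cuts off a solid torus from $f(\V_1)$: this is immediate because ``$D$ separates $\V_1$ into a solid torus and a genus-two compression body'' is a statement about the homeomorphism type of the complementary pieces, which is preserved by the homeomorphism $f|_{\V_1}$. The same holds on the $\W$ side. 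Step 3: Combine with Lemma \ref{lemma-character-BB}: if $(V,W)$ is the center of a component of $\D_{\V_1\W_1}(F_1)$, then neither $V$ nor $W$ cuts off a solid torus, so by Step 2 neither $f(V)$ nor $f(W)$ cuts off a solid torus from the relevant compression body bounded by $F_2$; since $(f(V),f(W))$ is a weak reducing pair (disjointness is preserved), Lemma \ref{lemma-character-BB} identifies it as the center of a component of $\D_{\V_2\W_2}(F_2)$. Step 4: Note the excerpt already points out that the argument is ``essentially the same as that of Lemma 3.1 of \cite{JungsooKim2015}'', so I would invoke that for any remaining bookkeeping (e.g. checking that the induced map on $\D$ is a well-defined simplicial isomorphism and that it restricts to a bijection on components).

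\textbf{Main obstacle.} The only genuine subtlety is Step 1's ``possibly swapping $\V$ and $\W$'': $f$ need not respect the transverse orientation of the Heegaard surfaces, so $f$ could carry $\V_1$ onto $\W_2$. One must check that Lemma \ref{lemma-character-BB} and the notion of ``center'' are symmetric under this swap — they are, since the solid-torus condition is imposed on both disks of the pair and on both compression bodies simultaneously — so the conclusion is unaffected. A secondary, routine point is verifying that $f$ really does send weak reducing pairs to weak reducing pairs and not merely disjoint disk pairs that might fail the non-triviality conditions; but disjointness is a simplicial condition preserved by the induced simplicial isomorphism $\D(F_1)\to\D(F_2)$, and a disk remains a genuine compressing disk under a homeomorphism, so there is nothing to fix. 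Thus the proof is short: normalize $f$, transport the solid-torus characterization across the homeomorphism, and quote Lemma \ref{lemma-character-BB}.
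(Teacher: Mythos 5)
Your proof is correct and is essentially the intended argument: the paper gives no proof of Lemma \ref{lemma-center-center} beyond deferring to Lemma 3.1 of the companion paper, and the natural reduction is exactly yours --- normalize so that $f(F_1)=F_2$, note that a homeomorphism carries compressing disks to compressing disks and preserves the property of cutting off a solid torus from the relevant compression body, and apply the intrinsic characterization of centers in Lemma \ref{lemma-character-BB} . The only cosmetic point is that the ``uncrossing/innermost-disk argument'' in your Step 1 is unnecessary, since disjointness and essentiality are preserved outright by a homeomorphism taking $F_1$ to $F_2$.
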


\section{The proof of Theorem \ref{theorem-GHS-HS}\label{section3}}

In this section, we will prove Theorem \ref{theorem-GHS-HS}.

Suppose that there are two generalized Heegaard splittings $\mathbf{H}_1$ and $\mathbf{H}_2$ obtained by weak reductions from weakly reducible, unstabilized, genus three Heegaard splittings  $(\V_1,\W_1;F_1)$ and $(\V_2,\W_2;F_2)$ of an orientable, irreducible $3$-manifold $M$ respectively.
Assume that there is an orientation preserving automorphism $f$ of $M$ that takes $\mathbf{H}_1$ into $\mathbf{H}_2$, i.e. $f$ sends the thick levels of $\mathbf{H}_1$ into those of $\mathbf{H}_2$ and sends the inner thin level of $\mathbf{H}_1$ into that of $\mathbf{H}_2$.
In Theorem \ref{lemma-determine-GHSs}, we will prove that we can isotope $f$ so that (i) $f(F_1)=F_2$ and (ii) $f(\mathbf{H}_1)=\mathbf{H}_2$.

\begin{theorem}\label{lemma-determine-GHSs}
Let $(\V_i,\W_i;F_i)$ be a weakly reducible, unstabilized, genus three Heegaard splitting in an orientable, irreducible $3$-manifold $M$, $\mathcal{B}_i$ a component of $\DVWi(F_i)\subset \D(F_i)$, $(V_i,W_i)$ the center of $\mathcal{B}_i$, and $\mathbf{H}_i$ the generalized Heegaard splitting obtained by weak reduction along $(V_i,W_i)$ from $(\V_i,\W_i;F_i)$ for $i=1,2$.
If $f$ is an orientation preserving automorphism of $M$ sending $\mathbf{H}_1$ into $\mathbf{H}_2$, then there is an isotopy $f_t$ such that $f_0=f$, $f_1(F_1)=F_2$, and $f_t(\mathbf{H}_1)=\mathbf{H}_2$ for  $0\leq t \leq 1$.
\end{theorem}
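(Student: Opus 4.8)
The plan is to recover each Heegaard surface $F_i$ from the generalized Heegaard splitting $\mathbf{H}_i$ by the amalgamation of Definition~\ref{def-amalgamation}, and then to transport $F_1$ onto $F_2$ through $f$ by a concatenation of ambient isotopies, each of which fixes the ambient generalized Heegaard splitting setwise. By the conventions of Definition~\ref{def-isotopy} the hypothesis lets us assume, after replacing $f$ by an isotopic automorphism, that $f(\mathbf{H}_1)=\mathbf{H}_2$ as sets of surfaces; in particular $f$ carries the inner thin level of $\mathbf{H}_1$ onto that of $\mathbf{H}_2$ and the two genus two thick levels of $\mathbf{H}_1$ onto those of $\mathbf{H}_2$ (possibly interchanging them, which does not affect the resulting amalgamated surface). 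Write $\mathbf{H}_i=\{\bar F_{V_i},\bar F_{V_iW_i},\bar F_{W_i}\}$ as in Lemma~\ref{lemma-four-GHSs}; since $(V_i,W_i)$ is the center of $\mathcal{B}_i$, Lemma~\ref{lemma-character-BB} together with Lemma~\ref{lemma-four-GHSs} tells us that the thick levels are connected of genus two, the inner thin level is a union of at most two tori, and every product region collapsed in the cleaning step of Definition~\ref{definition-WR} lies against $\partial M$.

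I would first prove two relative refinements of standard facts. \emph{(a) Amalgamation reverses weak reduction, rel $\mathbf{H}_i$.} Running the pre-weak reduction and cleaning of Definition~\ref{definition-WR} backwards --- retubing the two genus two thick levels along pushed-off copies of $V_i$ and $W_i$ and re-expanding the collapsed $\partial M$-product regions --- can be carried out inside a regular neighbourhood of $\mathbf{H}_i\cup V_i\cup W_i$. Thus for a suitable amalgamation datum $\mathcal{D}_i$ (a choice of $1$-handle structures near $\bar F_{V_iW_i}$ and of the projection maps, as in Definition~\ref{def-amalgamation}) there is an ambient isotopy $\psi^i_t$ with $\psi^i_0=\Id$, $\psi^i_t(\mathbf{H}_i)=\mathbf{H}_i$ for all $t$, and $\psi^i_1(F_i)=\widehat F_i$, where $\widehat F_i:=\operatorname{Amalg}(\mathbf{H}_i,\mathcal{D}_i)$. \emph{(b) A relative version of Proposition~\ref{lemma-Lackenby}.} The only freedom in amalgamating $\mathbf{H}_i$ is the spine/$1$-handle data for the two compression bodies abutting the \emph{fixed} surface $\bar F_{V_iW_i}$, together with the projections; any two choices differ by an ambient isotopy of these compression bodies fixing their boundaries, hence by an ambient isotopy of $M$ preserving $\mathbf{H}_i$ setwise. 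Combining (a) and (b), $\operatorname{Amalg}(\mathbf{H}_i,\mathcal{D})$ is isotopic to $F_i$ rel $\mathbf{H}_i$ for \emph{every} amalgamation datum $\mathcal{D}$.

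Granting these, the theorem follows formally. As $f(\mathbf{H}_1)=\mathbf{H}_2$ and the conditions on an amalgamation datum are topological, $f(\mathcal{D}_1)$ is an amalgamation datum for $\mathbf{H}_2$, and naturality of the construction gives $f(\widehat F_1)=\operatorname{Amalg}(\mathbf{H}_2,f(\mathcal{D}_1))$. Applying (a)--(b) with $i=2$ and datum $f(\mathcal{D}_1)$ yields an ambient isotopy $\psi^2_t$ with $\psi^2_0=\Id$, $\psi^2_t(\mathbf{H}_2)=\mathbf{H}_2$ for all $t$, and $\psi^2_1(F_2)=f(\widehat F_1)$. Set $f_t=f\circ\psi^1_{2t}$ for $0\le t\le \frac12$ and $f_t=\psi^2_{2-2t}\circ(\psi^2_1)^{-1}\circ f\circ\psi^1_1$ for $\frac12\le t\le 1$. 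The two pieces agree at $t=\frac12$ (both equal $f\circ\psi^1_1$), so $f_t$ is an isotopy with $f_0=f$; along it $f_t(\mathbf{H}_1)=f(\mathbf{H}_1)=\mathbf{H}_2$ on $[0,\frac12]$, while on $[\frac12,1]$ we get $f_t(\mathbf{H}_1)=\psi^2_{2-2t}((\psi^2_1)^{-1}(\mathbf{H}_2))=\psi^2_{2-2t}(\mathbf{H}_2)=\mathbf{H}_2$; and $f_1(F_1)=(\psi^2_1)^{-1}(f(\widehat F_1))=(\psi^2_1)^{-1}(\psi^2_1(F_2))=F_2$, as required.

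The main obstacle will be the two relative statements (a) and (b), that is, upgrading ``weak reduction and amalgamation are mutually inverse'' and Proposition~\ref{lemma-Lackenby} from \emph{free} isotopies to isotopies that fix $\mathbf{H}_i$ setwise. This needs careful bookkeeping of the product regions removed in cleaning and of the spine data near the thin level, and a verification that all the required moves are supported in a neighbourhood of $\mathbf{H}_i$ together with its defining disks. The hypothesis that $(V_i,W_i)$ is the center of a component --- which by Lemma~\ref{lemma-character-BB} and Lemma~\ref{lemma-four-GHSs} forces the thick levels to be connected of genus two and confines the collapsed product regions to $\partial M$ --- is exactly what keeps this local picture under control; without it one would have to contend with $\V$- or $\W$-disks cutting off solid tori.
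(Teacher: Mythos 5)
Your closing concatenation is formally fine, and your overall strategy --- recover $F_i$ from $\mathbf{H}_i$ by amalgamation and show that the recovery can be performed by ambient isotopies preserving $\mathbf{H}_i$ setwise --- is essentially the strategy the paper itself follows. The gap is that your two ``relative refinements'' (a) and (b), which you defer as ``the main obstacle,'' are not refinements of standard facts with routine proofs: they are the entire technical content of the theorem, and the one-line justification you offer for (b) rests on a false premise. You assert that any two choices of spine/$1$-handle data for the compression bodies abutting $\bar F_{V_iW_i}$ ``differ by an ambient isotopy of these compression bodies fixing their boundaries.'' Uniqueness of the minimal defining set (Lemma \ref{lemma-defining}) does give that the two cocore disks are isotopic \emph{in} the compression body, but the isotopy carrying one $1$-handle onto the other cannot in general be taken rel boundary: the image $1$-handle sits inside $\partial_-\W_1^2\times I$ as a braided monotone tube, and straightening it forces its feet to move around inside the inner thin level and the isotopy to spill into a collar of $\bar F_{V_2W_2}$ on the \emph{other} side, inside $\V_2^2$. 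Because both compression bodies abut the same thin level, normalizing the data on one side threatens to destroy the already-normalized data on the other; this interference is exactly what must be controlled, and your sketch does not address it.

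The paper's proof is devoted almost entirely to this point: Claims A--C set up the two $1$-handles as monotone tubes relative to explicit product structures, Claim D separates their footprints on $\bar F_{V_2W_2}$ into disjoint disks $D_{\W_1^2}$ and $D_{\V_2^2}$, and Claim E.1-A together with Lemma \ref{lemma-spines} carries out an explicit braid-normalization, untwisting, and ``shadow'' isotopy supported in $\partial_-\W_1^2\times I$ plus a product neighborhood of $\bar F_{V_2W_2}-D_{\V_2^2}$ in $\V_2^2$, so that it provably leaves the $\V_2^2$-side data untouched; Steps A and B then perform the two untyings in an order that avoids interference. None of this appears in your proposal, so statement (b) --- and with it the whole argument --- is unsupported. (Statement (a) is comparatively harmless; the paper does essentially this when it observes that $\mathbf{H}$ is itself obtained by weak reduction from the amalgamation after thinning the $1$-handles. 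But even there the footprint separation must be arranged for both sides at once.) In short: correct skeleton, but the load-bearing lemma is missing.
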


\begin{proof}
Without loss of generality, assume that $f$ sends the thick level of $\mathbf{H}_1$ contained in $\V_1$ into the thick level of $\mathbf{H}_2$ contained in $\V_2$.

Let $(\V_1^i,\V_2^i;\bar{F_i}_{V_i})\cup_{\bar{F_i}_{V_i W_i}}(\W_1^i,\W_2^i;\bar{F_i}_{W_i})$ be the generalized Heegaard splitting $\mathbf{H}_i$, where $\partial_-\V_2^i\cap\partial_-\W_1^i=\bar{F_i}_{V_i W_i}$.
In this setting, $\V_2^i\cap \partial_- \V_i=\emptyset=\W_1^i\cap\partial_-\W_i$ by Lemma \ref{lemma-four-GHSs}.
By the assumption of $f$, we can see that $f(\V_i^1)=\V_i^2$ and $f(\W_i^1)=\W_i^2$ for $i=1,2$.

We will prove that we can isotope $f$ so that $f(\V_1)=\V_2$ where the isotopy preserves the thick levels and the inner thin level of $f(\mathbf{H}_1)$ during the isotopy.

If we consider the compressing disks $V_i$ and $W_i$ of $\V_i$ and $\W_i$, then they are naturally extended to the compressing disks $\tilde{V}_i$ and $\tilde{W}_i$ of $\W_1^i$ and $\V_2^i$ as follows.
If we consider Lemma \ref{lemma-2-8}, then $\partial V_i$ belongs to the genus two component of ${F_i}_{W_i}$, say ${F_i'}_{W_i}$, and $\partial V_i$ is an essential simple closed curve in ${F_i'}_{W_i}$ (see (a) of Figure \ref{fig-wrc}), where  ``the genus two component of ${F_i}_{W_i}$'' is the one used when we obtain the thick level $\partial_+\W_1^i$ as in the last statement of Lemma \ref{lemma-four-GHSs}.
\begin{figure}
\includegraphics[width=12cm]{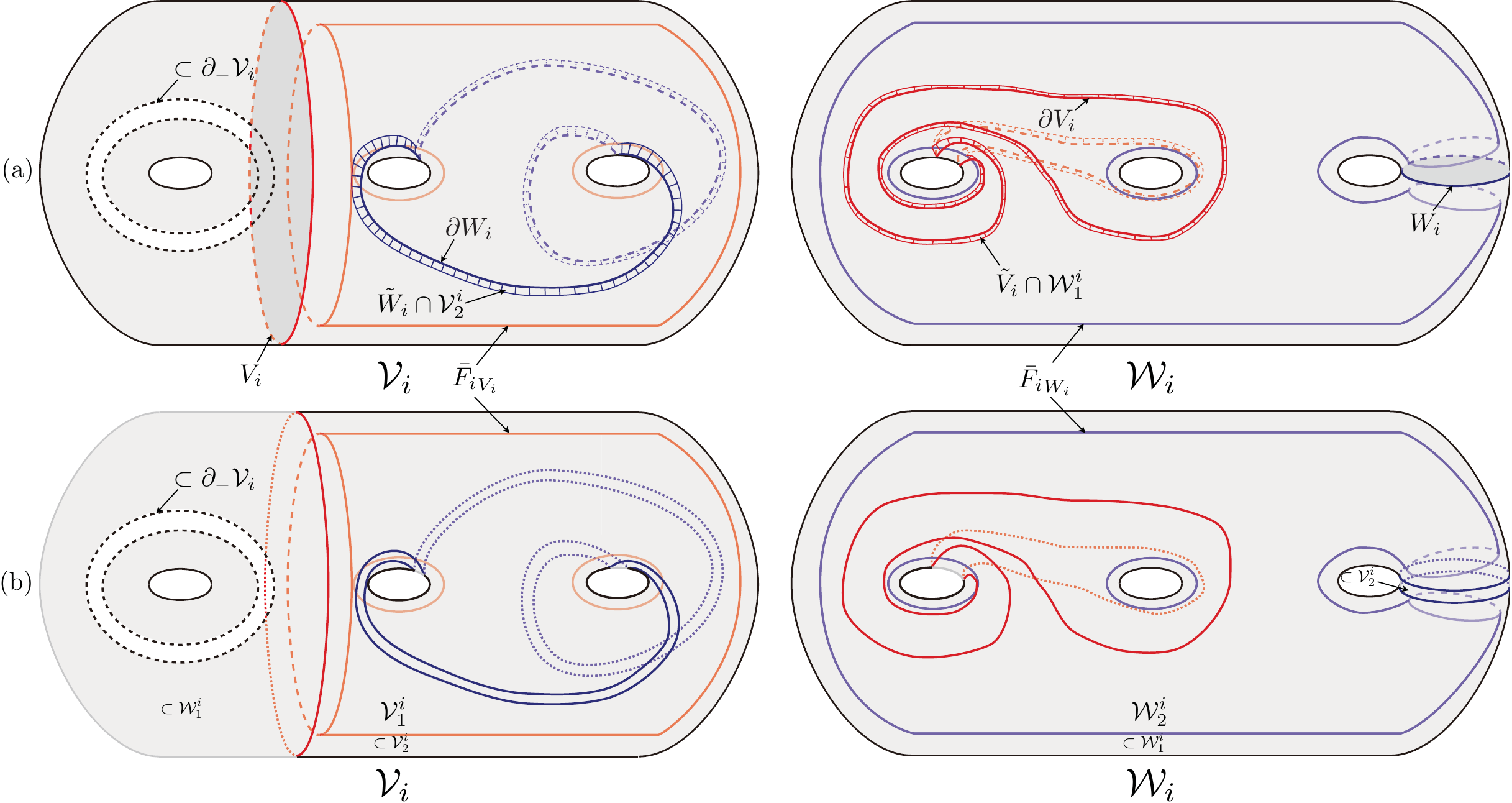}
\caption{(a) We can extend $V_i$ and $W_i$ into the compressing disks of $\W_1^i$ and $\V_2^i$ respectively. (b) the GHS \label{fig-wrc}}
\end{figure}
Here, the region between ${F_i'}_{W_i}$ and $\partial_+\W_1^i$ is homeomorphic to $\partial_+\W_1^i\times I$.
Let $A$ be a properly embedded  incompressible annulus in $\partial_+\W_1^i\times I$ such that $\partial V_i$ is a component of $\partial A$ and the other component of $\partial A$ belongs to $\partial_+\W_1^i$ (such an annulus can be obtained by projecting $\partial V_i$ into $\partial_+\W_1^i$ through a given product structure of $\partial_+\W_1^i\times I$).
Moreover, there is a unique properly embedded incompressible annulus in $\partial_+\W_1^i\times I$ such that it has $\partial V_i$ as a boundary component and the other boundary component belongs to $\partial_+\W_1^i$ up to isotopy constant on ${F_i'}_{W_i}$ (see Lemma 3.4 of \cite{Waldhausen1968}).
Hence, the other component of $\partial A$ other than $\partial V_i$ is uniquely determined  up to isotopy in $\partial_+\W_1^i$.
This means that if we define $\tilde{V}_i$  as $V_i\cup A$, then it becomes a compressing disk of $\W_1^i$ and $\tilde{V}_i$ is well-defined up to isotopy in $\W_1^i$ (if we see (b) of Figure \ref{fig-wrc} or more generally Figure 8, Figure 9, Figure 10 and Figure 11 of \cite{JungsooKim2014}, then we can see that $\tilde{V}_i$ is contained in $\W_1^i$).
The symmetric argument also holds for $\partial W_i$ by considering the product region between the genus two component of ${F_i}_{V_i}$ containing $\partial W_i$, say  ${F_i'}_{V_i}$, and $\partial_+\V_2^i$ and therefore we get the wanted compressing disk $\tilde{W}_i$ of $\V_2^i$ from $W_i$.

Since $(V_i,W_i)$ is the center of $\mathcal{B}_i$ for $i=1,2$, we get the following claim.\\

\ClaimNN{A}{$\V_2^i=(\partial_-\V_2^i\times I)\cup N(\tilde{W}_i)$ and $\W_1^i=(\partial_-\W_1^i\times I)\cup N(\tilde{V}_i)$, where
\begin{enumerate}
	\item $N(\tilde{W}_i)$ and $N(\tilde{V}_i)$ are $1$-handles attached to $(\partial_-\V_2^i\times I)$ and $(\partial_-\W_1^i\times I)$ to complete $\V_2^i$ and $\W_1^i$ respectively and
	\item they are product neighborhoods of $\tilde{W}_i$ and $\tilde{V}_i$ in $\V_2^i$ and $\W_1^i$ respectively.
\end{enumerate}}

\begin{proofN}{Claim A}
Recall that $\V_2^i$ is a genus two compression body with non-empty minus boundary, i.e. there is a unique non-separating disk of $\V_2^i$ if $\partial_-\V_2^i$ is connected or there is a unique compressing disk of $\V_2^i$ if $\partial_-\V_2^i$ is disconnected up to isotopy and the uniquely determined disk cuts off $\V_2^i$ into $\partial_-\V_2^i\times I$ as in the proof of Lemma \ref{lemma-defining}.
Hence, it is sufficient to show that $\tilde{W}_i$ is isotopic to such disk in $\V_2^i$.  

If we consider the case when $\partial_-\V_2^i$ is disconnected, then the proof is trivial by the uniqueness of compressing disk in $\V_2^i$.

Now suppose that $\partial_-\V_2^i$ is connected.
Then we can discard the cases when $\mathbf{H}_i$ is a type (c) or type (d) GHS by Lemma \ref{lemma-four-GHSs}.

If $\partial W_i$ is separating in $F_i$, then $W_i$ does not cut off a solid torus from $\W_i$ by the assumption that $(V_i,W_i)$ is the center of $\mathcal{B}_i$ and Lemma \ref{lemma-character-BB}, i.e. it cuts off $(\text{torus})\times I$ from $\W_i$.
But this means that the generalized Heegaard splitting obtained by weak reduction along $(V_i,W_i)$ is a type (b)-$\V_i$ GHS or type (c) GHS by Lemma \ref{lemma-four-GHSs}, violating  the assumption that $\partial_- \V_2^i$ is connected.
Hence, $\partial W_i$ must be non-separating in $F_i$.
Here, we can see that it is also non-separating in ${F_i'}_{V_i}$ because the case when $\partial W_i$ is non-separating in $F_i$ but it is separating in ${F_i'}_{V_i}$ appears only if $\mathbf{H}_i$ is of type (d) GHS by Lemma \ref{lemma-four-GHSs}.
Therefore the canonical projection of $\partial W_i\subset{F_i'}_{V_i}$ into $\partial_+\V_2^i$ in $\partial_+\V_2^i\times I$ is also non-separating in $\partial_+\V_2^i$, i.e. $\tilde{W}_i$ is a non-separating disk in $\V_2^i$.
The symmetric argument also holds for $\tilde{V}_i$ in $\W_1^i$.

This completes the proof of Claim A.
\end{proofN}

If we consider the assumption that $\tilde{V}_i$ and $\tilde{W}_i$ are naturally extended from $V_i$ and $W_i$ by attaching uniquely determined annuli to them, then we can assume that $N(\tilde{V}_i)\cap \V_i$ and $N(\tilde{W}_i)\cap \W_i$ are also product neighborhoods of $V_i$ and $W_i$ in $\V_i$ and $\W_i$ respectively, say $N(V_i)$ and $N(W_i)$, by choosing $N(\tilde{V}_i)$ and $N(\tilde{W}_i)$ suitably.
Hence, we can consider $N(\tilde{V}_i)$ as a big cylinder and $N(V_i)$ as a vertical small cylinder in the middle of $N(\tilde{V}_i)$ for $i=1,2$ with respect to a given $D^2\times I$ structure  of $N(\tilde{V}_i)$ and the symmetric argument also holds for $N(\tilde{W}_i)$ and $N(W_i)$ for $i=1,2$.
(From now on, we will use the term ``\textit{cylinder}'' to denote a $3$-manifold homeomorphic to  $D^2\times I$.)\\

\ClaimN{B1}{
We can isotope $f$ so that (i) $f(N(\tilde{V}_1))=N(\tilde{V}_2)$ and $f(N(\tilde{V}_1)\cap\partial_+\W_1^1)=N(\tilde{V}_2)\cap\partial_+\W_1^2$, (ii) $f(N(V_1))=N(V_2)$ and $f(N(V_1)\cap\partial_+\V_1)=N(V_2)\cap\partial_+\V_2$, and (iii) the assumption $f(\mathbf{H}_1)=\mathbf{H}_2$ holds at any time during the isotopy.
}

\begin{proofN}{Claim B1}
Since $\W_1^1=(\partial_-\W_1^1\times I)\cup N(\tilde{V}_1)$ by Claim A and $f$ is a homeomorphism, $\W_1^2=f(\W_1^1)=f(\partial_-\W_1^1\times I)\cup f(N(\tilde{V}_1))$, where $f(N(\tilde{V}_1))$ is a $1$-handle attached to $f(\partial_-\W_1^1\times I)=\partial_-\W_1^2\times I$, i.e. $f(\tilde{V}_1)$ is the cocore disk of the $1$-handle.
But Lemma \ref{lemma-defining} implies that there exists a unique such cocore disk in $\W_1^2$ up to isotopy and therefore $f(\tilde{V}_1)$ is isotopic  to $\tilde{V}_2$  in $\W_1^2$ by considering Claim A.
Hence, the existence of the isotopy of $f$ satisfying (i) is obvious (see the procedure from (a) to (b) of Figure \ref{fig-isotopy}).
After the previous isotopy, we can modify the location of the small cylinder $f(N(V_1))$ in the big cylinder $N(\tilde{V}_2)$ by an isotopy to satisfy (ii)  (see the procedure from (b) to (c) of Figure \ref{fig-isotopy}).
Since we can assume that $f(\mathbf{H}_1)=\mathbf{H}_2$ during these isotopies, (iii) holds.
This completes the proof of Claim B1.
\end{proofN}

\begin{figure}
\includegraphics[width=12cm]{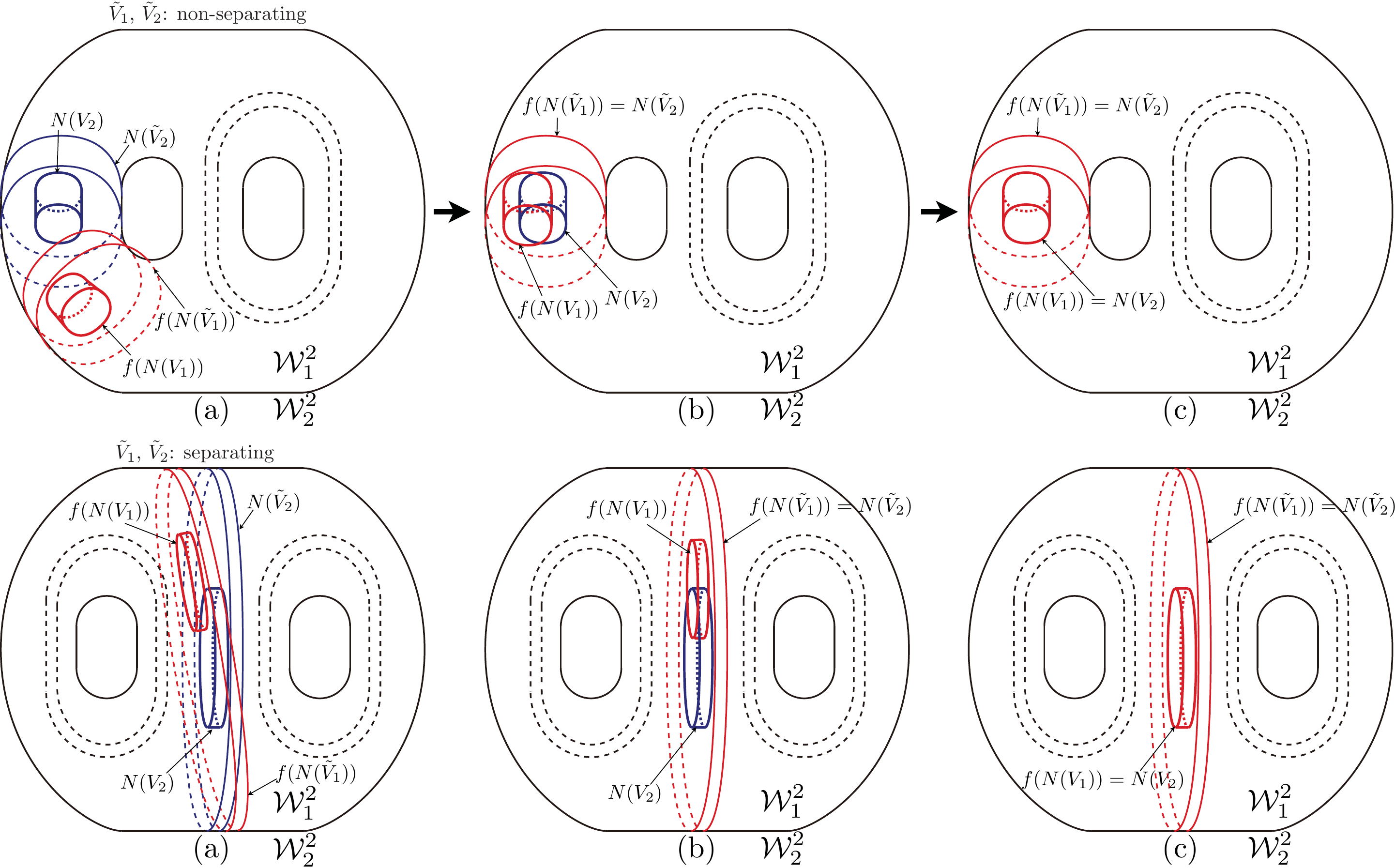}
\caption{$f(N(\tilde{V}_1))=N(\tilde{V}_2)$ and $f(N(V_1))=N(V_2)$ \label{fig-isotopy}}
\end{figure}

Note that the isotopy of Claim B1 affects not only the image $f(\W_1^1)$  but also $f(\W_2^1)$ near $\partial_+\W_2^2$ even though both $f(\W_1^1)$ and $f(\W_2^1)$ are preserved setwisely during the isotopy.
But we can assume that it does not affect the image of the inner thin level and therefore this isotopy does not affect $f(\V_2^1)$.

Hence, we get the following claim similarly.\\

\ClaimN{B2}{
Without changing the result of Claim B1, we can isotope $f$ so that (i) $f(N(\tilde{W}_1))=N(\tilde{W}_2)$ and $f(N(\tilde{W}_1)\cap\partial_+\V_2^1)=N(\tilde{W}_2)\cap\partial_+\V_2^2$, (ii) $f(N(W_1))=N(W_2)$ and $f(N(W_1)\cap\partial_+\W_1)=N(W_2)\cap\partial_+\W_2$, and (iii) the assumption $f(\mathbf{H}_1)=\mathbf{H}_2$ holds at any time during the isotopy.
}

The schematic figure describing this situation is Figure \ref{fig-homeo}.\\
\begin{figure}
\includegraphics[width=12cm]{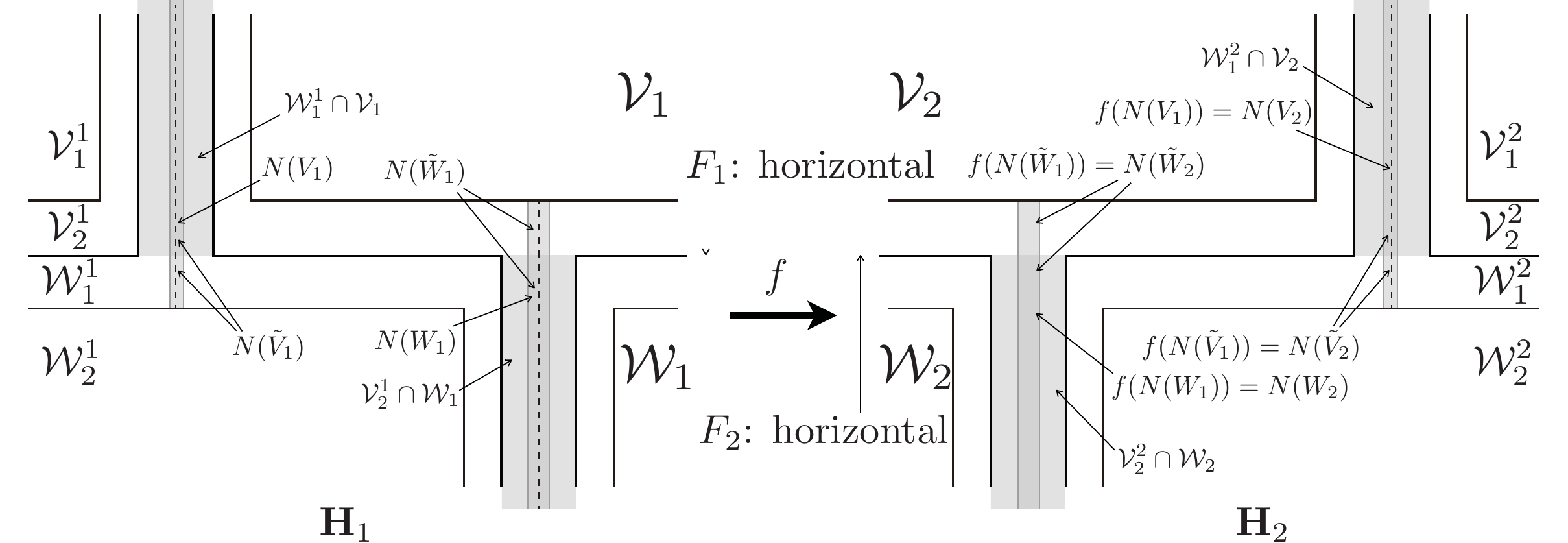}
\caption{after the isotopies of Claim B1 and Claim B2\label{fig-homeo}}
\end{figure}

Next, we can observe the follows, where this observation is the crucial idea of the proof of Theorem \ref{lemma-determine-GHSs}.
Recall that $(V_i, W_i)$ is the center of $\mathcal{B}_i$ for $i=1,2$, i.e. each of $V_i$ and $W_i$ is either non-separating or cuts off $(\text{torus})\times I$ from the relevant compression body by Lemma \ref{lemma-character-BB}.
(Note that we can refer to the top of Figure 8, the top of Figure 9, Figure 10 and Figure 11 in \cite{JungsooKim2014} for all possible cases.)\\
\begin{enumerate}
\item If $V_i$ ($W_i$ resp.) is non-separating in $\V_i$ ($\W_i$ resp.), then $\W_1^i\cap \V_i$ ($\V_2^i\cap \W_i$ resp.) is homeomorphic to $D^2\times I$ in $\W_1^i$ ($\V_2^i$ resp.) intersecting $F_i$ in $\partial D^2 \times I$ such that $D^2\times \{0,1\}$ belongs to the inner thin level $\bar{F_i}_{V_i W_i}$, where both disks are the scars of $V_i$ ($W_i$ resp.), and the other levels belong to the interior of $\W_1^i$ ($\V_2^i$ resp.) for $i=1,2$. 
(If we check (b) of Figure \ref{fig-wrc}, then we can see that $\V_2^i\cap \V_i$ consists of $\partial_+\V_2^i\times I$ and $\V_2^i\cap \W_i$ consists of a product neiborhood of $W_i$ in $\W_i$.)
Moreover, $f(\W_1^1\cap\V_1)$ ($f(\V_2^1\cap \W_1)$ resp.) is also a $D^2\times I$ in $\W_1^2$ ($\V_2^2$ resp.) such that the top and bottom levels belong to the inner thin level $\bar{F_2}_{V_2 W_2}$ and the other levels belong to the interior of $\W_1^2$ ($\V_2^2$ resp.) by the assumption that $f(\W_1^1)=\W_1^2$ ($f(\V_2^1)=\V_2^2$ resp.) and $f$ is a homeomorphism.
\label{papa}\\

\item If $V_i$ ($W_i$ resp.) cuts off $(\text{torus})\times I$ from $\V_i$ ($\W_i$ resp.), then $\W_1^i\cap \V_i$ ($\V_2^i\cap \W_i$ resp.) is homeomorphic to $(\text{torus})\times I$ in $\W_1^i$ ($\V_2^i$ resp.) intersecting $F_i$ in a once-punctured torus such that the top level belongs to $\partial_- \V_i$ ($\partial_-\W_i$ resp.), the bottom level intersects the inner thin level $\bar{F_i}_{V_i W_i}$ in a disk, where this disk is a scar of $V_i$ ($W_i$ resp.), and the other levels belong to the interior of $\W_1^i$ ($\V_2^i$ resp.) for $i=1,2$. 
(If we check (b) of Figure \ref{fig-wrc}, then we can see that $\W_1^i\cap \W_i$ consists of $\partial_+\W_1^i\times I$ and $\W_1^i\cap \V_i$ consists of $(\text{torus})\times I$ in $\V_i$.)
Moreover, $f(\W_1^1\cap\V_1)$ ($f(\V_2^1\cap \W_1)$ resp.) is also a $(\text{torus})\times I$ in $\W_1^2$ ($\V_2^2$ resp.) such that the top level belongs to $\partial_-\V_2$ ($\partial_-\W_2$ resp.), the bottom level intersects the inner thin level $\bar{F_2}_{V_2 W_2}$ in a disk, and the other levels belong to the interior of $\W_1^2$ ($\V_2^2$ resp.)  by the assumption that $f(\W_1^1)=\W_1^2$ ($f(\V_2^1)=\V_2^2$ resp.) and $f$ is a homeomorphism.
\label{papb}\\
\end{enumerate}
Now we will visualize each case.
For the sake of convenience, we will only consider the disk $\tilde{V}_i$ in $\W_1^i$  and use the symmetric arguments for $\tilde{W}_i$ in $\V_2^i$.
We will denote $\operatorname{cl}(\W_1^i-N(\tilde{V}_i))$ as $\partial_-\W_1^i\times I$ by Claim A.
Let $\partial_-\W_1^i\times \{0\}$ be $\partial_-\W_1^i$ itself and $\partial_-\W_1^i\times \{1\}$ be the union of the other components of $\partial \operatorname{cl}(\W_1^i-N(\tilde{V}_i))$ for $i=1,2$.
\begin{enumerate}
\item Case: $V_i$ is non-separating in $\V_i$.\label{aaaaa}

Here, we can see that if we drill a hole in $\W_1^i$ through the cylinder $\W_1^i\cap \V_i$ and take the closure of the resulting one, then $\bar{\W}_1^i=\operatorname{cl}(\W_1^i-(\W_1^i\cap \V_i))=\W_1^i\cap\W_i$ is homeomorphic to $\partial_+\W_1^i\times I$, i.e. the core arc of the cylinder is a spine of $\W_1^i$ by Definition \ref{def-spine}, say $\alpha_i$, because we can consider the closure of $\eta(\alpha_i)$ in $\W_1^i$ as $\W_1^i\cap \V_i$ itself.
In particular, $\operatorname{cl}((\W_1^i\cap \V_i)-N(V_i))$ consists of two components $C_1^i$ and $C_2^i$ such that (i) $\W_1^i\cap \V_i= C_1^i\cup N(V_i)\cup C_2^i$ and (ii) each $C_j^i$ is a cylinder whose top and bottom levels belong to $\partial_-\W_1^i\times \{0,1\}$ and the other levels belong to $\partial_-\W_1^i\times (0,1)$ for $i=1,2$.
Hence, we divide $\alpha_i$ into the three parts, the core arc of $N(V_i)$ and the core arcs of $C_1^i$ and $C_2^i$ which are the extended parts from the core arc of $N(V_i)$ down to $\partial_-\W_1^i$ to complete $\alpha_i$.
If we consider $\W_1^i-\eta(\alpha_i)=\partial_+\W_1^i\times I$, then the incompressible annulus $\tilde{V}_i-\eta(\alpha_i)$ is isotopic to vertical one by Lemma 3.4 of \cite{Waldhausen1968} by an isotopy constant on $\partial_+\W_1^i$.
In other words, we can deform the product structure of $\partial_+\W_1^i\times I$ so that $\tilde{V}_i-\eta(\alpha_i)$ would be vertical and we can assume that $N(\tilde{V}_i)-\eta(\alpha_i)$ is also vertical.
Hence, if we cut $\partial_+\W_1^i\times I$ along $N(\tilde{V}_i)-\eta(\alpha_i)$, take the closure of the resulting one, i.e. $\operatorname{cl}(\partial_+\W_1^i-N(\tilde{V}_i))\times I=(\partial_-\W_1^i \times I)-\eta(\alpha_i)$,  and move the annuli $\operatorname{cl}(\eta(\alpha_i))\cap ((\partial_-\W_1^i \times I)-\eta(\alpha_i))$ into vertical ones, then we get new product structure such that  $(\partial_-\W_1^i \times I)-\eta(\alpha_i)$ is homeomorphic to $((\partial_-\W_1^i\times\{1\})-\eta(\alpha_i))\times I$, where the $1$-level is $(\partial_-\W_1^i\times\{1\})-\eta(\alpha_i)$ itself.
This suggests a product structure of $\partial_-\W_1^i \times I$ such that $\alpha_i\cap (\partial_-\W_1^i \times I)$ is vertical.
Hence, we will represent $\alpha_i\cap (\partial_-\W_1^i \times I)$ and $C_1^i\cup C_2^i$ as vertical ones for the sake of convenience.\\
\begin{figure}
\includegraphics[width=12cm]{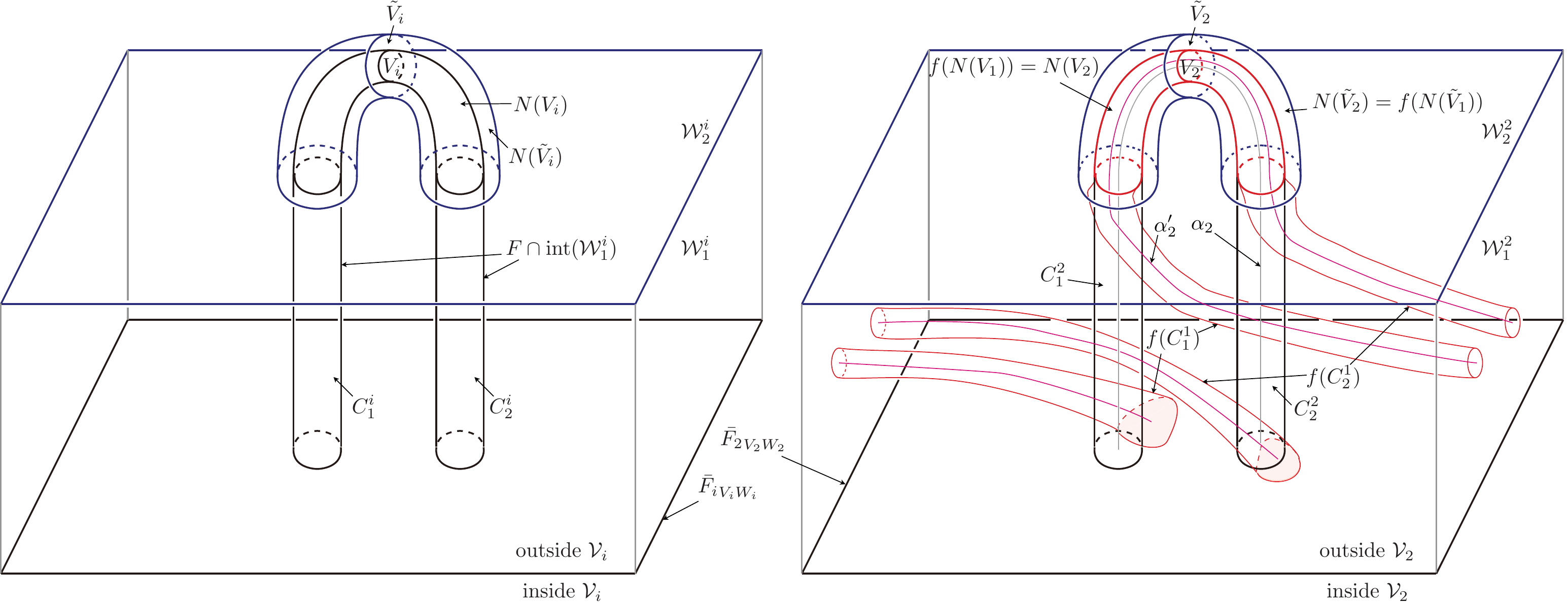}
\caption{the case when $V_i$ is non-separating in $\V_i$. \label{fig-sit-1}}
\end{figure}
Since $f$ is a homeomorphism, $f(\bar{\W}_1^1)$ is homeomorphic to $\partial_+\W_1^1\times I$ and the assumption that $f(\W_1^1)=\W_1^2$ means that it is homeomorphic to $\partial_+\W_1^2\times I$.
Moreover, we can see that $f(\bar{\W}_1^1)=f(\operatorname{cl}(\W_1^1-(\W_1^1\cap \V_1))=\operatorname{cl}(\W_1^2-f(\W_1^1\cap \V_1))$ where $f(\W_1^1\cap \V_1)$ is homeomorphic to $D^2\times I$ such that $D^2\times\{0,1\}$ belongs to $\partial_- \W_1^2$ and $D^2\times (0,1)$ belongs to $\operatorname{int}(\W_1^2)$ as in the previous observation.
Hence, Definition \ref{def-spine} implies that  the core arc of the cylinder $f(\W_1^1\cap \V_1)=f(C_1^1)\cup f(N(V_1))\cup f(C_2^1)$ is also a spine of $\W_1^2$, say $\alpha_2'$.
Here, we can assume that $\alpha_2'\cap f(N(V_1))$ is a parallel copy of $\alpha_2\cap N(V_2)$ by the assumption $f(N(V_1))=N(V_2)$.

Note that the inner thin level $\bar{F_i}_{V_i W_i}$ is either connected or disconnected even though $V_i$ is non-separating in $\V_i$, i.e. it consists of a torus or two tori (see Figure \ref{fig-sit-1} and Figure \ref{fig-sit-3} respectively).\\
\begin{figure}
\includegraphics[width=7cm]{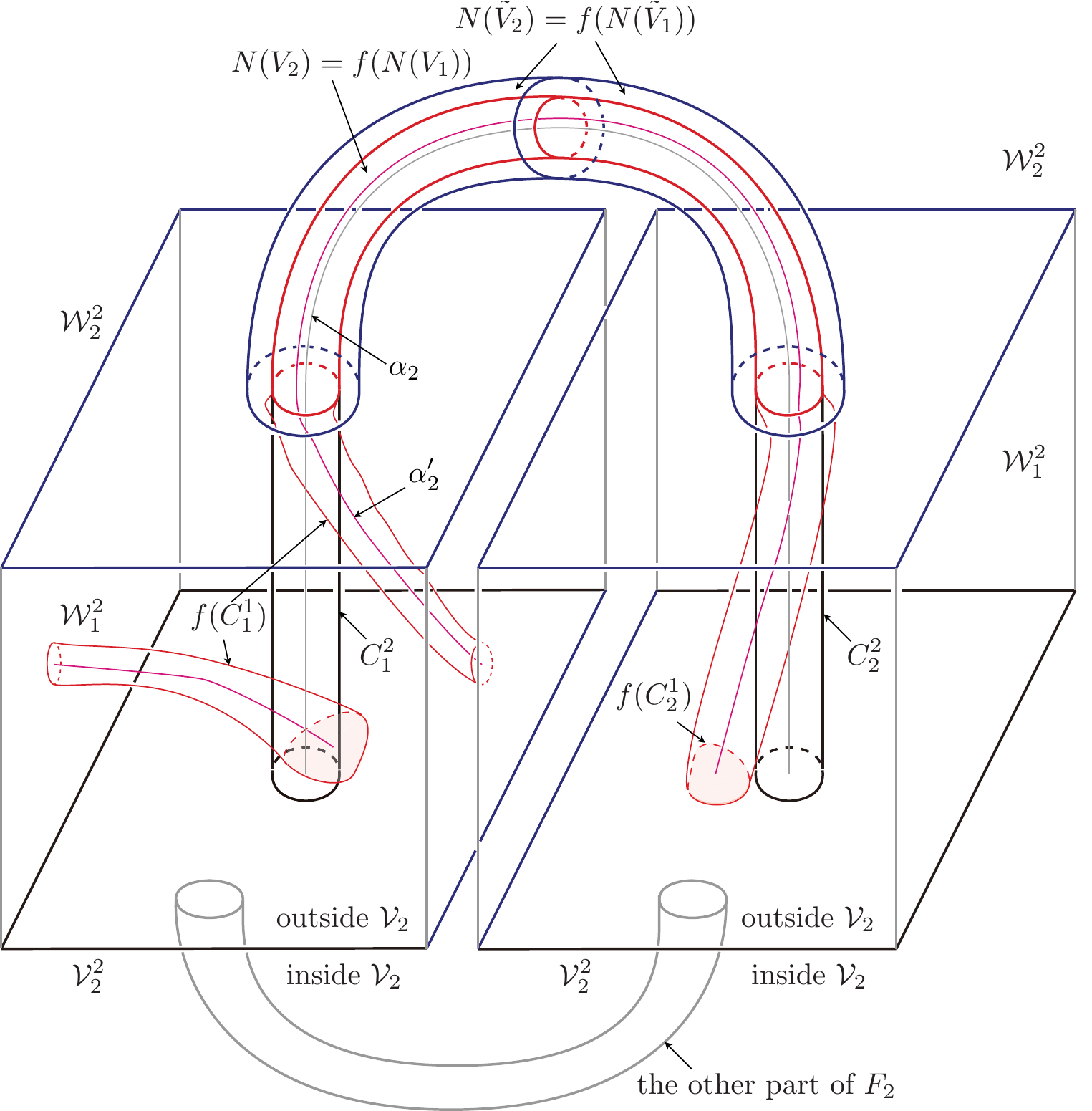}
\caption{another case when $V_i$ is non-separating in $\V_i$. \label{fig-sit-3}}
\end{figure}

\item Case: $V_i$ cuts off $(\text{torus})\times I$ from $\V_i$.

Let $T_i$ be the torus $\partial_-\V_i\cap \partial_- \W_1^i$.
Then we can take $N(T_i)=T_i\times I$ such that (i) $T_i\times \{0\}=T_i$, (ii) $T_i\times \{1\}$ intersects $\partial_+\V_i$ in a once-punctured torus, and (iii) $ \W_1^i\cap \V_i=N(T_i)\cup \bar{N}(V_i)$ where $\bar{N}(V_i)$ is a product neighborhood of $V_i$ in $\V_i$ containing $N(V_i)$ in the middle and $N(T_i)\cap\bar{N}(V_i)$ is a compressing disk in $\V_i$ isotopic to $V_i$ in $\V_i$.
Let us consider the genus two compression body $\tilde{\W}_1^i=\operatorname{cl}(\W_1^i-N(T_i))$ which is a deformation-retraction of $\W_1^i$.
Then $\bar{N}(V_i)$ is a cylinder connecting the two components of $\partial_-\tilde{\W}_1^i$.
Here, we take the product structure of $\partial_-\W_1^i\times I$ so that $T_i\times \{1\}\subset N(T_i)$ would be horizontal.
If we drill a hole in $\tilde{\W}_1^i$ through the cylinder $\bar{N}(V_i)$  and take the closure of the resulting one, say $\bar{\W}_1^i$, then it is equal to $\operatorname{cl}(\W_1^i - (\W_1^i\cap \V_i))$, i.e. $\bar{\W}_1^i$ is homeomorphic to $\partial_+\W_1^i\times I$ (see (b) of Figure \ref{fig-wrc}) and therefore also homeomorphic to $\partial_+\tilde{\W}_1^i\times I$.
This means that the core arc of $\bar{N}(V_i)$ is a spine of $\tilde{\W}_1^i$ by Defnition \ref{def-spine}, say $\alpha_i$.
Let the two $(\text{torus})\times I$ components of $\operatorname{cl}(\tilde{\W}_1^i-N(\tilde{V}_i))$ be $\mathcal{X}_1^i$ and $\mathcal{X}_2^i$, where $\mathcal{X}_1^i\cap\partial_-\V_2^i=\emptyset$ and $\mathcal{X}_2^i\cap\partial_-\V_2^i\neq\emptyset$.
Then $\operatorname{cl}(\bar{N}(V_i)-N(V_i))$ consists of two components $C_1^i$ and $C_2^i$ such that each $C_j^i$ is a cylinder whose top and bottom levels belong to $\partial \mathcal{X}_j^i$ for $j=1,2$ (see the left of Figure \ref{fig-sit-2}).
Here, we can draw $\alpha_i\cap \mathcal{X}_j^i$ and $C_j^i$ as vertical ones in $\mathcal{X}_j^i$ for $j=1,2$ by the assumption that $\alpha_i$ is a spine of $\tilde{\W}_1^i$ similarly as the previous case.

Let us consider the image  $f(\tilde{\W}_1^1)$.
Then it is equal to $\operatorname{cl}(\W_1^2-f(N(T_1)))$ since $f(\W_1^1)=\W_1^2$.
Moreover, we can see that $f(N(T_1))$ is homeomorphic to $T_2\times I$ because $f(T_1)=T_2$.
Hence, we can isotope $f$ so that $f(N(T_1))=N(T_2)$, i.e. we get $f(\tilde{\W}_1^1)=\tilde{\W}_1^2$.
Since $f$ is a homeomorphism, $f(\bar{\W}_1^1)=\operatorname{cl}(f(\tilde{\W}_1^1)-f(\bar{N}(V_1)))=\operatorname{cl}(\tilde{\W}_1^2-f(\bar{N}(V_1)))$ is homeomorphic to $\partial_+\tilde{\W}_1^1\times I$.
Therefore,  it is homeomorphic to $\partial_+\tilde{\W}_1^2\times I$ such that $\partial_+\tilde{\W}_1^2\times \{1\}$ is $\partial_+\tilde{\W}_1^2$ itself, i.e. the core arc of the cylinder $f(\bar{N}(V_1))=f(C_1^1)\cup f(N(V_1))\cup f(C_2^1)$ is also a spine of $\tilde{\W}_1^2$ by Definition \ref{def-spine}, say $\alpha'_i$.
Moreover, we can assume that $\alpha_2'\cap f(N(V_1))$ is a parallel copy of $\alpha_2\cap N(V_2)$. 
See the right of Figure \ref{fig-sit-2}.
\begin{figure}
\includegraphics[width=12cm]{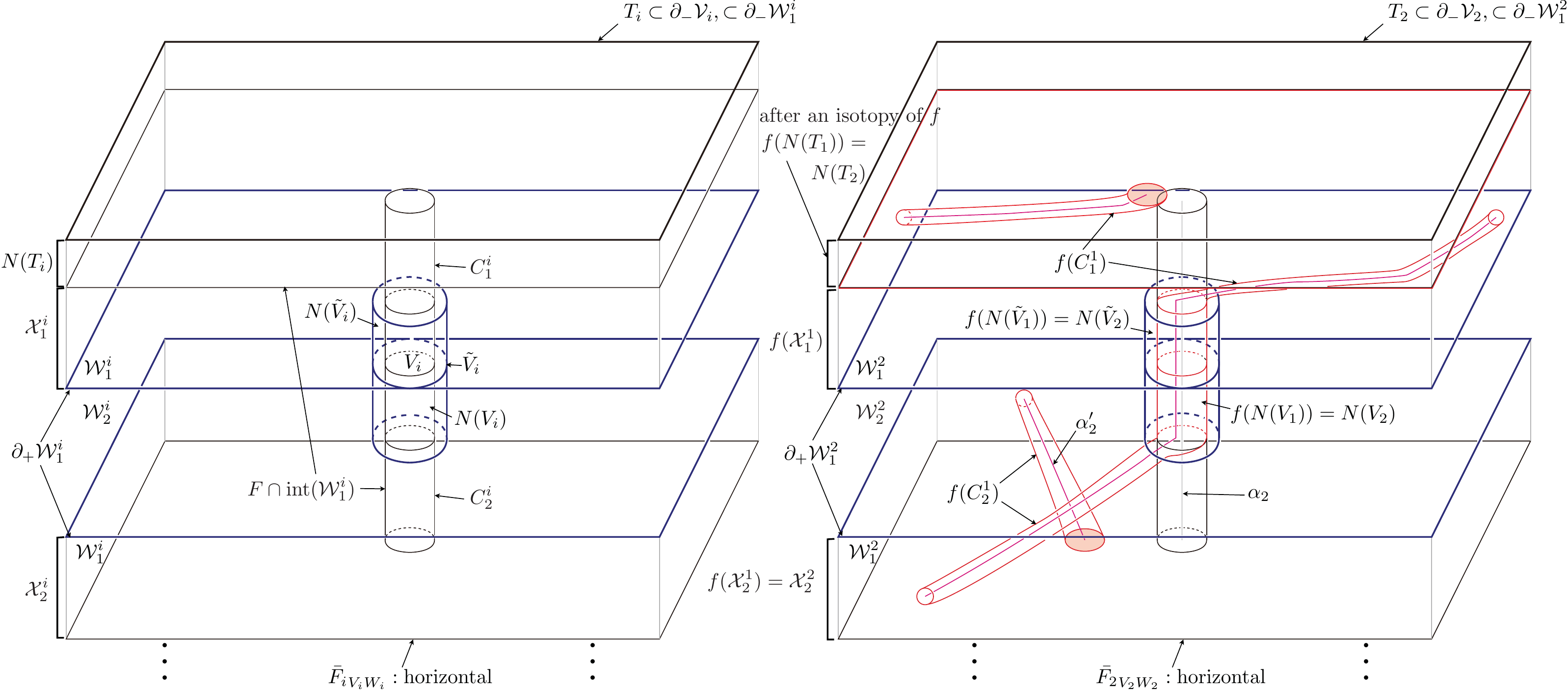}
\caption{the case when $V_i$ cuts off $(\text{torus})\times I$ from $\V_i$. \label{fig-sit-2}}
\end{figure}
\end{enumerate}

In any case, (i) we have represented $\cup_{j=1}^{2} C_j^i$ for $i=1,2$ as vertical cylinders in the relevant product structure  and (ii) we can say that the difference between $\alpha_2$ and $\alpha_2'$ comes from the two subarcs $\alpha_2'\cap f(C_1^1)$ and $\alpha_2'\cap f(C_2^1)$.\\

\ClaimN{C}{
We can isotope $f$ so that $\alpha_2'$ is monotone in the relevant product structure of $\partial_-\W_1^2\times I$ or $\partial_-\tilde{\W}_1^2\times I$.}

\begin{proofN}{Claim C}
We will prove that we can isotope $f$ so that $f(\cup_{j=1}^2 C_j^1)$ intersects each level surface of $\partial_-\W_1^2\times I$ in two disks (i.e. the core arcs of $f(\cup_{j=1}^2 C_j^1)$ are monotone) when $V_i$ is non-separating in $\V_i$ and $\bar{F}_{V_i W_i}$ is connected and the other cases are left as exercise.

Since $f(N(\tilde{V}_1))=N(\tilde{V}_2)$ and $f(\W_1^1)=\W_1^2$, $f(\partial_-\W_1^1\times I)=\partial_-\W_1^2\times I$, where $f(\partial_-\W_1^1\times \{1\})=\partial_-\W_1^2\times \{1\}$. 
Let $\tilde{f}=f|_{\partial_-\W_1^1\times I}$.
Suppose that there is an ambient isotopy $h_t$ defined on $\partial_-\W_1^2 \times I$ such that 
\begin{enumerate}
\item $h_t$ is the identity on $\partial_-\W_1^2\times \{0,1\}$ for $0\leq t \leq 1$ and
\item $h_1(\tilde{f}(\cup_{j=1}^2 C_j^1))$ intersects each level surface of $\partial_-\W_1^2\times I$ in two disks, 
\end{enumerate}
then we can extend it to the ambient isotopy $h'_t$ defined on $M$ such that $h'_t|_{\partial_-\W_1^2\times I}=h_t$ and $h'_t$ is the identity on $M-(\partial_-\W_1^2\times I)$.
Therefore, the argument in Definition \ref{def-isotopy} induces that $f$ can be isotoped so that $f(\cup_{j=1}^2 C_j^1)$ intersects each level surface of $\partial_-\W_1^2\times I$ in two disks.
Hence, it is sufficient to show the existence of such $h_t$.

Let us define a homeomorphism $g:\partial_-\W_1^2 \times I\to\partial_-\W_1^1 \times I$ such that
\begin{enumerate}
\item $g(x,s)=(\bar{g}(x),s)$ for $x\in\partial_-\W_1^2$, $s\in I$ and a homeomorphism $\bar{g}:\partial_-\W_1^2 \to \partial_-\W_1^1$,
\item where $\bar{g}$ satisfies $(\bar{g}(x),1)=\tilde{f}^{-1}(x,1)$ for $x\in\partial_-\W_1^2$ and $1\in I$.
\end{enumerate}
Then we can see that $g(\cup_{j=1}^2 C_j^2)=\cup_{j=1}^2 C_j^1$ because (i) $g(\{x\}\times I)=\{\bar{g}(x)\}\times I$, (ii) each $\cup_{j=1}^2 C_j^i$ is vertical in $\partial_-\W_1^i\times I$ for $i=1,2$, and (iii) $\tilde{f}((\cup_{j=1} C_j^1)\cap (\partial_-\W_1^1\times\{1\})=(\cup_{j=1} C_j^2)\cap (\partial_-\W_1^2\times\{1\})$ by the assumption $f(N(V_1))=N(V_2)$.
Hence, if we consider the composition $h=\tilde{f} \circ g$, then it is an automorphism of $\partial_-\W_1^2 \times I$ such that $h_{\partial_-\W_1^2\times\{1\}}=\operatorname{id}$.
Therefore Lemma 3.5 of \cite{Waldhausen1968} induces that there is an isotopy $h_t^\ast$ defined on $\partial_-\W_1^2\times I$, constant on $\partial_-\W_1^2\times \{0,1\}$, such that $h_0^\ast=h$ and $h_1^\ast$ is a level-preserving homeomorphism.
Since $\cup_{j=1} C_j^2$ intersects each level surface of $\partial_-\W_1^2\times I$ in two disks, so does $h_1^\ast(\cup_{j=1} C_j^2)$ because  $h_1^\ast$ is level-preserving. 

Hence, if we take the isotopy $h_t=h_t^\ast\circ h^{-1}$, then (i) $h_0=\operatorname{id}$ and (ii) $h_1(\tilde{f}(\cup_{j=1}C_j^1))=h_1^\ast(\cup_{j=1} C_j^2)$ intersects each level surface of $\partial_-\W_1^2\times I$ in two disks.
If we consider the assumption that $h_t^\ast$ is constant on $\partial_-\W_1^2\times \{0,1\}$ and $h_0=\operatorname{id}$, then we can see that $h_t$ is the identity on $\partial_-\W_1^2\times \{0,1\}$ for $0\leq t \leq 1$.

This completes the proof of Claim C.
\end{proofN}

We can use the symmetric arguments for $\V_2^i\cap \W_i$ for $i=1,2$ and $f(\V_2^1\cap \W_1)$ to visualize them and therefore we get the spine $\beta_2$ of $\V_2^2$ or $\tilde{\V}_2^2$ corresponding to $D_1^2\cup N(W_2)\cup D_2^2$ and the spine $\beta_2'$ of $\V_2^2=f(\V_2^1)$ or $\tilde{\V}_2^2=f(\tilde{\V}_2^1)$ corresponding to  $f(D_1^1)\cup f(N(W_1))\cup f(D_2^1)$ respectively, where the cylinder $D_j^i$ is obtained similarly as $C_j^i$ for $1\leq i,j\leq 2$.
Moreover, we can assume that $\beta_2'$ is monotone in the relevant product structure of $\partial_-\V_2^2\times I$ or $\partial_-\tilde{\V}_2^2\times I$ where $\beta_2$ is vertical similarly.

Since both $\alpha_2$ and $\alpha_2'$ are spines dual to the same minimal defining set $\{\tilde{V}_2\}$ of $\W_1^2$ or $\tilde{\W}_1^2$,  we can expect that $f(\W_1^1\cap\V_1)$ would be isotopic to $\W_1^2\cap \V_2$.
Moreover, we can expect that $f(\V_2^1\cap \W_1)$ would be isotopic to $\V_2^2\cap \W_2$ similarly.
But we cannot guarantee that the isotopy sending $f(\W_1^1\cap\V_1)$ into $\W_1^2\cap \V_2$ might not affect that sending $f(\V_2^1\cap \W_1)$ into $\V_2^2\cap \W_2$ because they share the common inner thin level.
Hence, we will describe the details of these ``\textit{untying isotopies}'' and find the way how to avoid possible interferences in the proof of Lemma \ref{lemma-spines}.

\begin{lemma}\label{lemma-spines}
We can assume that $f(\W_1^1\cap\V_1)=\W_1^2\cap \V_2$ and $f(\V_2^1\cap \W_1)=\V_2^2\cap \W_2$ simultaneously after a sequence of isotopies of $f$ satisfying $f(\mathbf{H}_1)=\mathbf{H}_2$ at any time of the isotopies.
\end{lemma}

\begin{proof}
From now on, we will describe the ``\textit{untying isotopies}'' of $f(\W_1^1\cap\V_1)$ and $f(\V_2^1\cap \W_1)$ rigorously such that the untying isotopies of $f(\W_1^1\cap\V_1)$ do not affect those of $f(\V_2^1\cap \W_1)$.\\

\ClaimN{D}{
(i) We can take the relevant product structures of $\bar{F_2}_{V_2 W_2}\times Is$ in $\W_1^2$ and $\V_2^2$ such that the projection images of the top and bottom levels of $N(\tilde{V}_2)$ and $N(\tilde{W}_2)$ as $D^2\times I$ into $\bar{F_2}_{V_2 W_2}$ do not intersect each other and (ii) we can isotope $f$ so that $f(\W_1^1\cap\V_1)\cap (\V_2^2\cap \W_2) = \emptyset$ and $f(\V_2^1\cap \W_1)\cap(\W_1^2\cap \V_2)=\emptyset$ satisfying $f(\mathbf{H}_1)=\mathbf{H}_2$ at any time of the isotopies.
}

\begin{proofN}{Claim D}
First, we will find two disks (if ${\bar{F_2}}_{V_2 W_2}$ is connected) or two sets of two disks (if ${\bar{F_2}}_{V_2 W_2}$ is disconnected), say $D_{\W_1^2}$ and $D_{\V_2^2}$, such that $D_{\W_1^2}$ contains the projection image of $N(\tilde{V}_2)\cap(\partial_-\W_1^2\times \{1\})$ into $\partial_-\W_1^2\times \{0\}$, $D_{\V_2^2}$ contains the projection image of $N(\tilde{W}_1)\cap(\partial_-\V_2^2\times \{1\})$ into $\partial_-\V_2^2\times \{0\}$, and $D_{\W_1^2}\cap D_{\V_2^2}=\emptyset$ for the case $\partial_-\W_1^2\cap\partial_-\V_2=\emptyset$ and $\partial_-\V_2^2\cap\partial_-\W_1=\emptyset$, i.e. $\mathbf{H}_2$ is a GHS of type (a) or type (d).
For the other cases, we only consider the relevant product structures in $\W_1^2$ and $\V_2^2$ intersecting the inner thin level ${\bar{F_2}}_{V_2 W_2}$ (for example, $\mathcal{X}_2^2$ in $\W_1^2$) and the details are left as exercise.

If we consider the assumptions that  $\W_1^i\cap\V_2^i$ is the inner thin level $\bar{F_i}_{V_i W_i}$ and the observation that each of $\W_1^i\cap \V_i$ and $\V_2^i\cap \W_i$ intersects $\bar{F_i}_{V_i W_i}$ in the scars of $V_i$ or $W_i$ respectively, then we can see that $(\W_1^i\cap \V_i)\cap (\V_2^i\cap \W_i)=\emptyset$ for $i=1,2$.
Recall that $\W_1^2\cap \V_2$ and $\V_2^2\cap \W_2$ are vertical cylinders in the relevant product structures intersecting $\bar{F_2}_{V_2 W_2}$.
If we change the product structures of $\partial_-\W_1^2\times I$ and $\partial_-\V_2^2\times I$ near $\partial_-\W_1^2\times\{1\}$ and $\partial_-\V_2^2\times\{1\}$ respectively, then we can assume that the projection image of $N(\tilde{V}_2)\cap(\partial_-\W_1^2\times\{1\})$ into $\partial_-\W_1^2\times\{0\}=\bar{F_2}_{V_2 W_2}$, say $p_{\tilde{V}_2}$, misses that of $N(\tilde{W}_2)\cap (\partial_-\V_2^2\times\{1\})$ into $\partial_-\V_2^2\times\{0\}=\bar{F_2}_{V_2 W_2}$, say $p_{\tilde{W}_2}$, without changing the assumption that $\W_1^2\cap \V_2$ and $\V_2^2\cap \W_2$ are vertical in the relevant product structures (see Figure \ref{fig-perturbation}).
Moreover, we can assume that this perturbation does not affect the assumption that $\alpha_2'$ and $\beta_2'$ are monotone in the relevant product structures in $\W_1^2$ and $\V_2^2$ respectively if we deform the product structures sufficiently near $\partial_-\W_1^2\times\{1\}$ and $\partial_-\V_2^2\times\{1\}$.  \begin{figure}
\includegraphics[width=10cm]{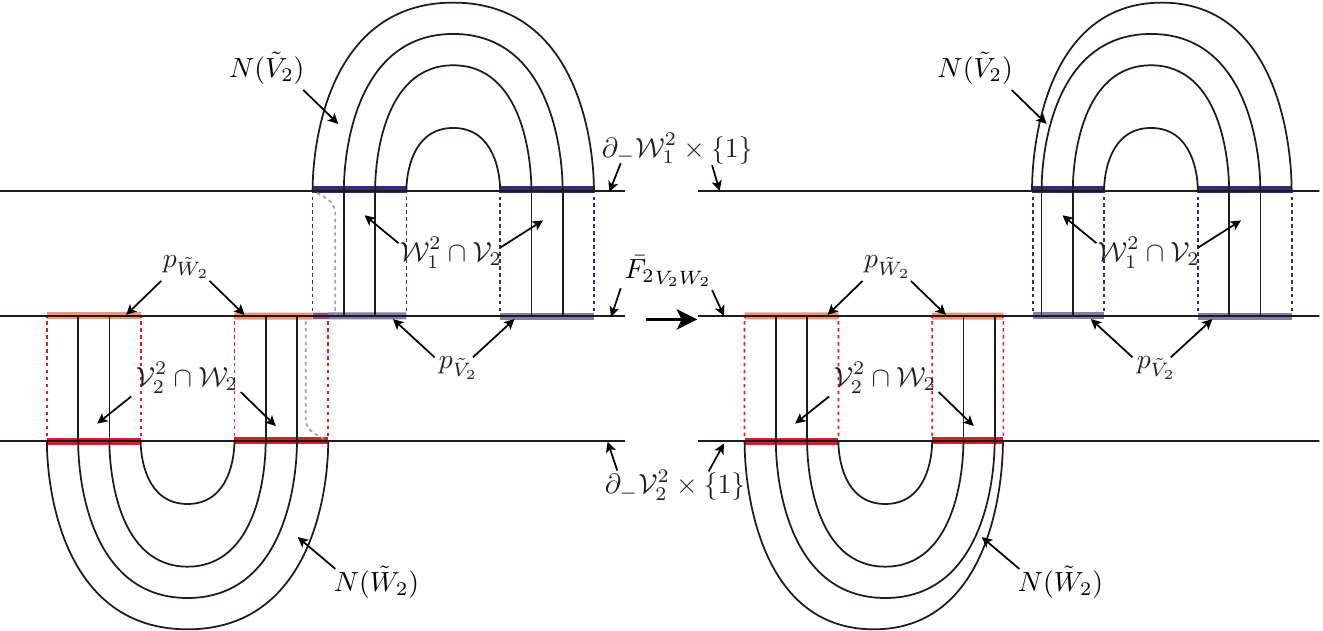}
\caption{the perturbations of two product structures
\label{fig-perturbation}}
\end{figure}
Also we can assume that there is a small neighborhood $p'_{\tilde{V}_2}$ and $p'_{\tilde{W}_2}$ of $p_{\tilde{V}_2}$ and $p_{\tilde{W}_2}$ in $\bar{F_2}_{V_2 W_2}$ respectively such that $p'_{\tilde{V}_2}\cap p'_{\tilde{W}_2}=\emptyset$.
Here, we can see that each of $p_{\tilde{V}_2}$ and $p_{\tilde{W}_2}$ consists of two components.\\

\begin{enumerate}
\item Case: $\bar{F_2}_{V_2 W_2}$ consists of a torus, i.e. $\mathbf{H}_2$ is a type (a) GHS.

We can choose a rectangle $R_{\tilde{V}_2}$ in the four-punctured torus $\operatorname{cl}(\bar{F_2}_{V_2 W_2}-(p'_{\tilde{V}_2}\cup p'_{\tilde{W}_2}))$ such that one edge of $R_{\tilde{V}_2}$ belongs to one component of $\partial p'_{\tilde{V}_2}$, the opposite edge belongs to the other component of $\partial p'_{\tilde{V}_2}$, and the interior of the other two edges belongs to the interior of the four-punctured torus.
Moreover, we can find a rectangle $R_{\tilde{W}_2}$ in the thrice-punctured torus $\operatorname{cl}(\bar{F_2}_{V_2 W_2}-(p'_{\tilde{V}_2}\cup p'_{\tilde{W}_2}\cup R_{\tilde{V}_2}))$ such that two opposite edges of $R_{\tilde{W}_2}$ belong to the two components of $\partial p'_{\tilde{W}_2}$ and the interior of the other two edges belongs to the interior of the thrice-punctured torus similarly.
This gives two disjoint disks $D_{\W_1^2}=p'_{\tilde{V}_2}\cup R_{\tilde{V}_2}$ and $D_{\V_2^2}=p'_{\tilde{W}_2}\cup R_{\tilde{W}_2}$ in $\bar{F_2}_{V_2 W_2}$.\\

\item Case: $\bar{F_2}_{V_2 W_2}$ consists of two tori $\bar{F_2}_{V_2 W_2}^1$ and $\bar{F_2}_{V_2 W_2}^2$, i.e. $\mathbf{H}_2$ is a type (d) GHS.

In this case, one component of $p_{\tilde{V}_2}$ belongs to $\bar{F_2}_{V_2 W_2}^1$ and the other component belongs to $\bar{F_2}_{V_2 W_2}^2$ (see Figure \ref{fig-sit-3}).
The symmetric argument also holds for $p_{\tilde{W}_2}$.
Then we choose $D_{\W_1^2}^j$ as $p_{\tilde{V}_2}'\cap\bar{F_2}_{V_2 W_2}^j$ for $j=1,2$ and $D_{\V_2^2}^j$ as $p_{\tilde{W}_2}'\cap\bar{F_2}_{V_2 W_2}^j$ for $j=1,2$.
In this case, denote $D_{\W_1^2}^1\cup D_{\W_1^2}^2$ and $D_{\V_2^2}^1\cup D_{\V_2^2}^2$ as $D_{\W_1^2}$ and $D_{\V_2^2}$ respectively.\\
\end{enumerate}

Next, we isotope $f$ near $\bar{F_2}_{V_2 W_2}$ so that the disks (or the disk)  $f(\W_1^1\cap\V_1)\cap\bar{F_2}_{V_2 W_2}$ would belong to $D_{\W_1^2}$ and the disks (or the disk) $f(\V_2^1\cap \W_1)\cap\bar{F_2}_{V_2 W_2}$ would belong to $D_{\V_2^2}$ satisfying $f(\mathbf{H}_1)=\mathbf{H}_2$ at any time during the isotopy.
This argument can be generalized to the case when $\bar{F_2}_{V_2 W_2}$ is disconnected.
Then we get $f(\W_1^1\cap\V_1)\cap (\V_2^2\cap \W_2)= \emptyset$ and $f(\V_2^1\cap \W_1)\cap(\W_1^2\cap \V_2)=\emptyset$.
After this isotopy, $D_{\W_1^2}$ and $D_{\V_2^2}$ contain the image of scars $f((\W_1^1\cap\V_1)\cap \bar{F_1}_{V_1 W_1})=f(\W_1^1\cap\V_1)\cap \bar{F_2}_{V_2 W_2}$ and $f((\V_2^1\cap \W_1)\cap \bar{F_1}_{V_1 W_1})=f(\V_2^1\cap \W_1)\cap \bar{F_2}_{V_2 W_2}$ in their interiors as well as the scars $(\W_1^2\cap \V_2)\cap \bar{F_2}_{V_2 W_2}$ and $(\V_2^2\cap \W_2)\cap \bar{F_2}_{V_2 W_2}$ respectively (see Figure \ref{fig-separating}).
\begin{figure}
\includegraphics[width=12cm]{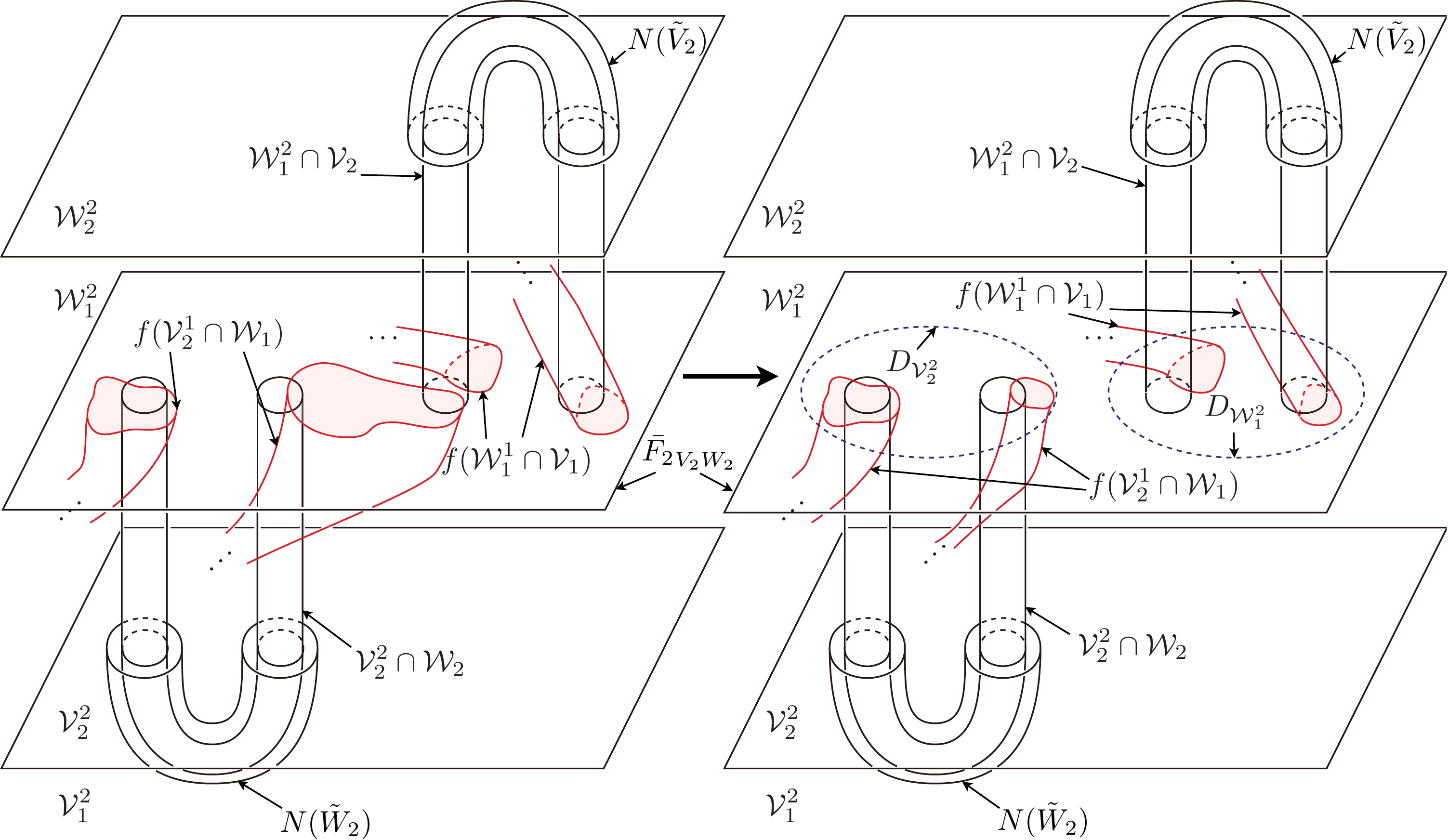}
\caption{We can assume that $f(\W_1^1\cap\V_1)\cap (\V_2^2\cap \W_2) = \emptyset$ and $f(\V_2^1\cap \W_1)\cap(\W_1^2\cap \V_2)=\emptyset$ after an isotopy of $f$. \label{fig-separating}}
\end{figure}
Note that we can assume that the assumption that $\alpha_2'$ and $\beta_2'$ are monotone in the relevant product structures of $\W_1^2$ and $\V_2^2$ are not changed after this step by using a suitable isotopy of $f$.

This completes the proof of Claim D.
\end{proofN}

In the following step, we will realize the untying isotopies of $f(\W_1^1\cap\V_1)$ and  $f(\V_2^1\cap \W_1)$.\\

\Step{A} we will isotope $f(\W_1^1\cap\V_1)$ into $\W_1^2\cap\V_2$ without affecting $f(\V_2^1\cap\W_1)$.\\

\Case{1} $\bar{F_i}_{V_i W_i}$ consists of a torus.\\

\Case{1-A} $\partial_-\W_1^i$ consists of a torus, i.e. a subcase for non-separating $V_i$.\\
In this case, $\partial_-\W_1^i=\bar{F}_{V_i W_i}$.\\

\ClaimN{E.1-A}{
Let $N_{\V_2^2}(\bar{F_2}_{V_2 W_2}-D_{\V_2^2})$ be a small product neighborhood of $\bar{F_2}_{V_2 W_2}-D_{\V_2^2}$  in $\V_2^2$.
After a sequence of isotopies of $f$ in $(\partial_-\W_1^2\times I)\cup(N_{\V_2^2}(\bar{F_2}_{V_2 W_2}-D_{\V_2^2}))$ which are the identity on $(\partial_-\W_1^2\times \{1\})$, $\alpha_2'$ becomes vertical in $\partial_-\W_1^2\times I$.
Moreover, $f(\mathbf{H}_1)=\mathbf{H}_2$ at any time during the isotopies.
This means that this sequence of isotopies does not affect $f(\V_2^1\cap \W_1)$.}

\begin{proofN}{Claim E.1-A}
From now on, we will represent an isotopy of the cylinder $f(C_1^1\cup N(V_1)\cup C_2^1)$ by that of $\alpha_2'$ for the sake of convenience. 

Recall that $\alpha_2'$ is parallel to $\alpha_2$ in $N(V_2)$.
But $\alpha_2'\cap (\partial_-\W_1^2\times I)$ is just a monotone 2-strands in $\partial_-\W_1^2\times I$ even though each component is unknotted (see Figure \ref{fig-4-braid}).\\
\begin{figure}
\includegraphics[width=6cm]{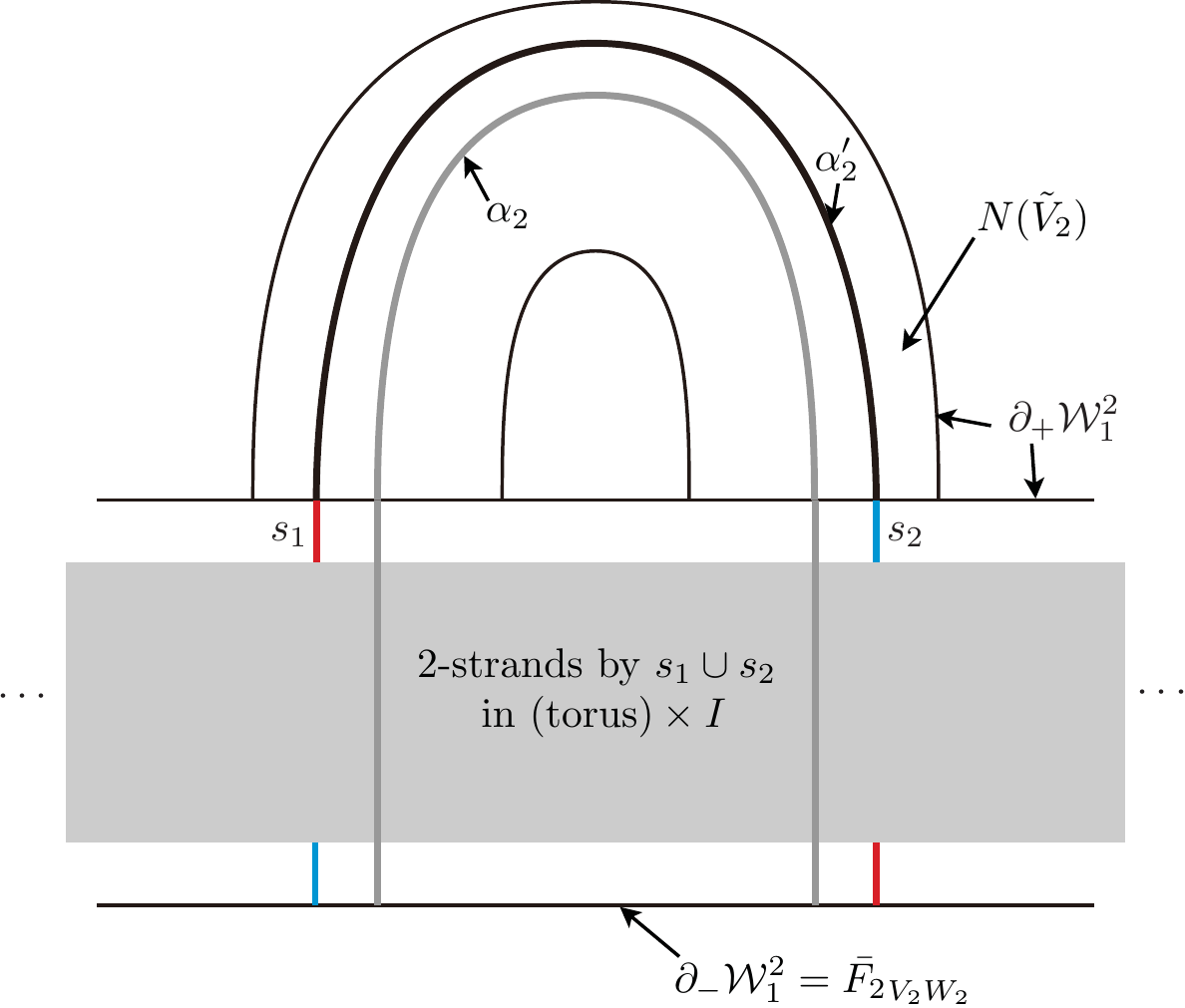}
\caption{the core arcs of $\W_1^2\cap \V_2$ and $f(\W_1^1\cap\V_1)$
 \label{fig-4-braid}}
\end{figure}

\Step{1: Normalize $\alpha_2'$ in $\partial_-\W_1^2\times I$}
In the proof of Step 1, we will denote $D_{\W_1^2}$ as $D$ for the sake of convenience.

Let $s_1$ and $s_2$ be the two strands of $\alpha_2'\cap (\partial_-\W_1^2\times I)$ such that $s_j$ is the core arc of $f(C_j^1)$ for $j=1,2$.
We isotope $f$ near $\bar{F_2}_{V_2 W_2}$ so that the projection of $(s_1\cup s_2)\cap (\partial_-\W_1^2\times\{1\})$ into $\bar{F_2}_{V_2 W_2}$ is equal to $(s_1\cup s_2)\cap \bar{F_2}_{V_2 W_2}$ and we say $p_j=s_j\cap\bar{F_2}_{V_2 W_2}$ for $j=1,2$.
Then we choose sufficiently small $\epsilon>0$ such that $(s_1\cup s_2)\cap(\partial_-\W_1^2\times ([0,\epsilon]\cup [1-\epsilon,1]))\subset \operatorname{int}(D)\times ([0,\epsilon]\cup [1-\epsilon,1])$ and reparametrize  $\partial_-\W_1^2\times I$ so that $\epsilon=\frac{1}{4}$.
Then we isotope $f$ so that each subarc of $s_i$ would be a vertical strand in $\partial_-\W_1^2\times ([0,\frac{1}{4}]\cup[\frac{3}{4},1])$ only affecting $\operatorname{int}(D)\times ([0,\frac{1}{4}+\epsilon']\cup[\frac{3}{4}-\epsilon',1])$ for sufficiently small $\epsilon'>0$.
After the isotopies and the reparametrization, $(p_1\cup p_2)\times ([0,\frac{1}{4}]\cup[\frac{3}{4},1])$ is equal to $\alpha'_2\cap (\partial_-\W_1^2\times ([0,\frac{1}{4}]\cup[\frac{3}{4},1]))$ and we can assume that $\alpha_2'$ remains monotone in $\partial_-\W_1^2\times [\frac{1}{4},\frac{3}{4}]$.

Let $q$ be a point in $\bar{F}_{V_2 W_2}$ missing $D_{\W_1^2}\cup D_{\V_2^2}$.
If we consider $\operatorname{cl}((\bar{F}_{V_2 W_2}-N(q))\times I)$ in $\partial_-\W_1^2\times I$, then it is  a genus two handlebody $\mathcal{H}$, where we assume a neighborhood $N(q)$ in $\bar{F}_{V_2 W_2}$ also misses $D_{\W_1^2}\cup D_{\V_2^2}$.
Here, we choose a meridian $c_1$ and a longitude $c_2$ of $\bar{F}_{V_2 W_2}$  such that (i) $c_1$ intersects $c_2$ transversely in exactly one point $q$, (ii) $c_1\cup c_2$ misses $D_{\W_1^2}\cup D_{\V_2^2}$.
Then $\{D_1=\operatorname{cl}(c_1-N(q))\times I, D_2=\operatorname{cl}(c_2-N(q))\times I\}$ forms a minimal defining set of $\mathcal{H}$. i.e. $\operatorname{cl}(\mathcal{H} - (N(D_1)\cup N(D_2)))$ is a $3$-ball $\mathcal{B}$, where $N(D_i)\cong D^2\times I$ is a product neighborhood of $D_i$ in $\mathcal{H}$ for $i=1,2$ so that each $D^2\times \{t\}$ is vertical in $\partial_-\W_1^2\times I$ for $0\leq t \leq 1$.
If we isotope $f$, then we can assume that (i) $\alpha_2'$ misses $N(q)\times I$, (ii) $\alpha_2'$ intersects $D_1\cup D_2$ transversely, and (iii) two different points of the intersection points $\{q_i\}_{i=1}^r=\alpha_2'\cap (D_1\cup D_2)$ are positioned at two different levels of $\partial_-\W_1^2 \times I$ without changing the assumption that $\alpha_2'$ is monotone in $\partial_-\W_1^2\times I$ (see (a) of Figure \ref{fig-normalize}).
\begin{figure}
\includegraphics[width=13cm]{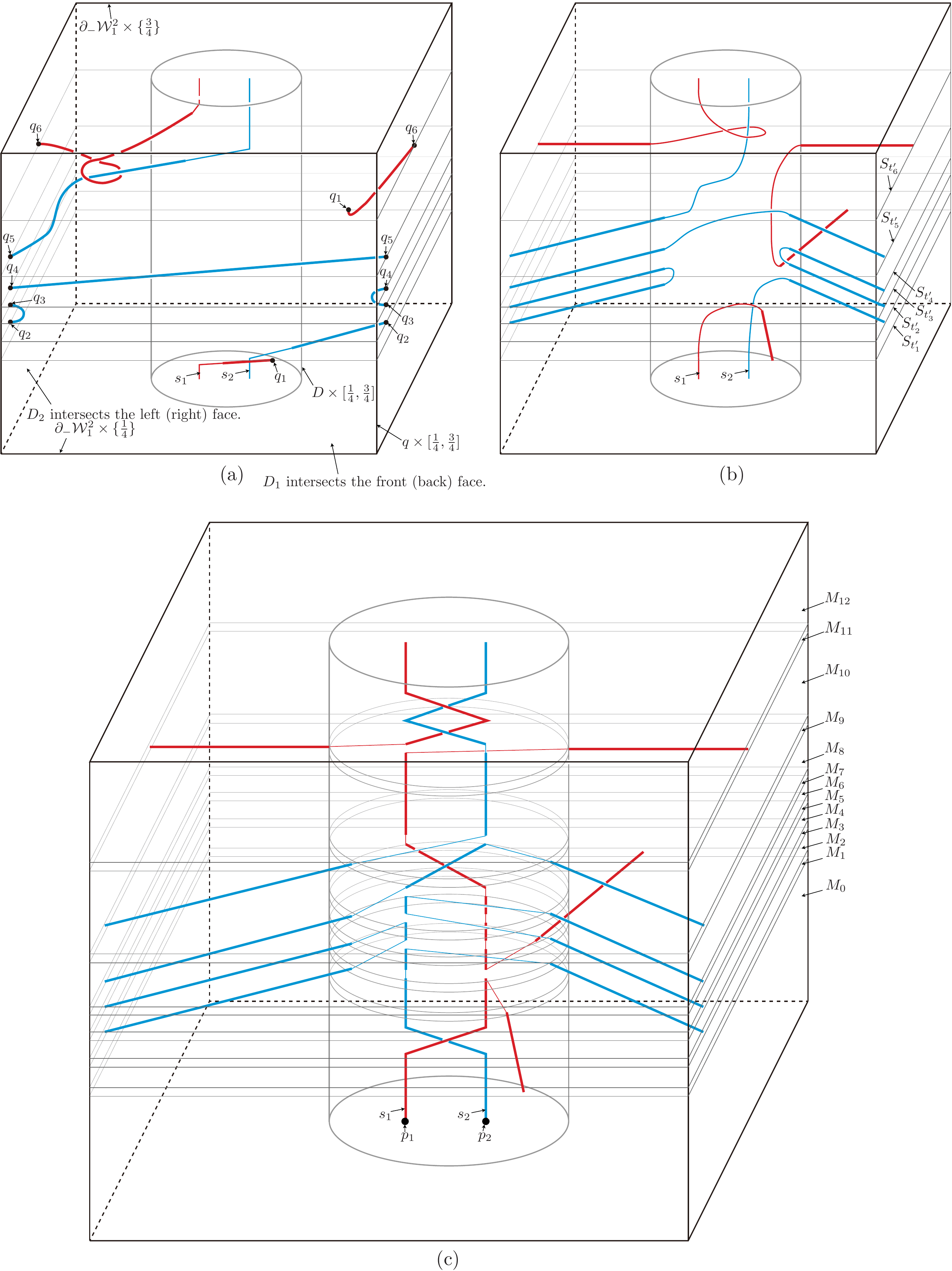}
\caption{the normalization procedure \label{fig-normalize}}
\end{figure}
Here, we assume that the indices of $\{q_i\}_{i=1}^r$ follow the order of levels of $\partial_-\W_1^2\times I$.
Let $\{t'_i\}_{i=1}^r$ ($t'_i\in (\frac{1}{4},\frac{3}{4})$) be the set of levels of $\partial_-\W_1^2 \times I$ corresponding to $\{q_i\}_{i=1}^r$.

Assume that (i) we've chosen  $N(D_1)\cup N(D_2)$ sufficiently thin so that  $N(D_1)\cap N(D_2)=\emptyset$  and (ii) whenever $\alpha_2'$ intersects $N(D_i)\cong D^2\times I$ for $i=1$ or $2$ in a subarc of $\alpha_2'$, it passes through each of  $D^2\times \{0\}$ and $D^2\times \{1\}$ transversely in exactly one point. 
Let $S_{t'_i}$ be the level surface $\partial_-\W_1^2\times \{t'_i\}$ and $\bar{S}_{t'_i}=\operatorname{cl}(S_{t'_i}-(N(q)\times I)-(N(D_1)\cup N(D_2)))$ therefore $\bar{S}_{t'_i}$ is a disk in the level surface $S_{t'_i}$.
Let us isotope $f$ so that this isotopy forces each component of $\alpha_2'\cap (N(D_1)\cup N(D_2))$ to belong to the corresponding $S_{t'_i}$ but monotone elsewhere in $\partial_-\W_1^2\times I$.

Let $\bar{S}$ be the disk $\operatorname{cl}(\bar{F}_{V_2 W_2}-N(q)-N(D_1)- N(D_2))\subset\bar{F}_{V_2 W_2}$.
If we isotope $f$ so that  the $3$-ball $\bar{S}\times [\frac{1}{4}+\epsilon',\frac{3}{4}-\epsilon']$ shrinks into the $3$-ball $D\times  [\frac{1}{4}+\epsilon',\frac{3}{4}-\epsilon']$  for very small $\epsilon'>0$ preserving each level (i.e. the genus two handlebody $((N(q)\times I) \cup N(D_1)\cup N(D_2))\cap(\partial_-\W_1^2\times [\frac{1}{4}+\epsilon',\frac{3}{4}-\epsilon'])$ expands into $\operatorname{cl}(\partial_-\W_1^2-D)\times [\frac{1}{4}+\epsilon',\frac{3}{4}-\epsilon']$, see (a)$\to$(b) in Figure \ref{fig-normalize}), then we can assume that 
\begin{enumerate}
	\item this isotopy only affects $\partial_-\W_1^2\times [\frac{1}{4},\frac{3}{4}]$,
	\item $\alpha_2'$ is horizontal in $\operatorname{cl}((\partial_-\W_1^2\times I)-(D\times I))$, and
	\item  $\alpha_2'$ belongs to $\operatorname{int}(D)\times I$ in the complement of $\{S_{t'_i}\}_{i=1}^r$ in $\partial_-\W_1^2\times I$.
\end{enumerate}
If we cut off $\operatorname{int}(D)\times I$ along the level surfaces $\{S_{t'_i}\}_{i=1}^r$, then we can see that (i) the closure of each component, say $\mathcal{B}_i$, is  a $3$-ball, such that $\mathcal{B}_i$ intersects $S_{t'_i}\cup S_{t'_{i+1}}$ if we say $t'_0=0$ and $t'_{r+1}=1$ and  (ii) each of $s_1$ and $s_2$ intersects $\mathcal{B}_{i}$ in a connected arc whose interior belongs to $\operatorname{int}(\mathcal{B}_{i})$ for every $0\leq i \leq r$.
Since each component of $(s_1\cup s_2)\cap \mathcal{B}_{i}$ is monotone in $\mathcal{B}_i$, $(s_1\cup s_2)\cap \mathcal{B}_i$ forms a $2$-braid in $\mathcal{B}_{i}$.

Hence, we normalize $s_1\cup s_2$ in $\partial_-\W_1^2\times [\frac{1}{4},\frac{3}{4}]$ as follows.

\begin{enumerate}
\item \label{nor0}
 Let $\delta=\frac{1}{4}\min_{i=0}^r(t'_{i+1}-t'_i)$, $\mathcal{B}_0^{\delta}=D\times [0,t'_{1}-\delta]\subset\mathcal{B}_0$, $\mathcal{B}_i^{\delta}=D\times [t'_i+\delta,t'_{i+1}-\delta]\subset\mathcal{B}_i$ for $1\leq i \leq r-1$, and  $\mathcal{B}_r^{\delta}=D\times [t'_r+\delta,1]\subset\mathcal{B}_r$.
 
Let $a_0=0$, $a_1=t'_1-\delta$, $a_2=t'_1+\delta$, $\cdots$, $a_{2r-1}=t'_r-\delta$, $a_{2r}=t'_r+\delta$, and $a_{2r+1}=1=a_n$.
\item \label{nor1}
We isotope $f$ so that the $2$-braid by the subarcs of $s_1\cup s_2$ in $\mathcal{B}_i$ would become a ``\textit{standardly positioned $2$-braid}'' with respect to the vertical direction, i.e. (i) every non-trivial twist of $s_1\cup s_2$ in $\mathcal{B}_i$ belongs to $\mathcal{B}_i^\delta$ and (ii) the image of the projection of the endpoints of this $2$-braid in $\mathcal{B}_i^\delta$ into $\bar{F_2}_{V_2 W_2}$ would be $\{p_1,p_2\}$ for $0\leq i \leq r$ (see (c) of Figure \ref{fig-normalize}).
\item Cut $\partial_-\W_1^2\times I$ along the level surfaces $\{S_{t_i'-\delta},S_{t_i'+\delta}\}_{i=1}^r$ and let $M_j$ be the closure of each component.
Hence, we get the set of submanifolds $\{M_j\}_{j=0}^{2r}$, where the index increase from the bottom level to the top level and therefore $M_0$ intersects $\partial_-\W_1^2\times \{0\}$ and $M_{2r}$ intersects $\partial_-\W_1^2\times \{1\}$.
\item If $j$ is even, then $(s_1\cup s_2)\cap M_j$ belongs to some $\mathcal{B}_k^\delta$.
\item If $j$ is odd, then we can assume that one of $(s_1\cup s_2)\cap M_j$ is vertical in $M_j$ by adding an additional ``\textit{position-changing half-twist}'' to the top of the corresponding $2$-braid in $M_{j-1}$ if we need. (See Figure \ref{fig-position-change}. If we see (c) of Figure \ref{fig-normalize}, then the thin vertical subarcs of $s_1\cup s_2$ in $D\times [\frac{1}{4},\frac{3}{4}]$ denotes the vertical part of $s_1\cup s_2$ in $M_j$ for odd $j$.)
\begin{figure}
\includegraphics[width=10cm]{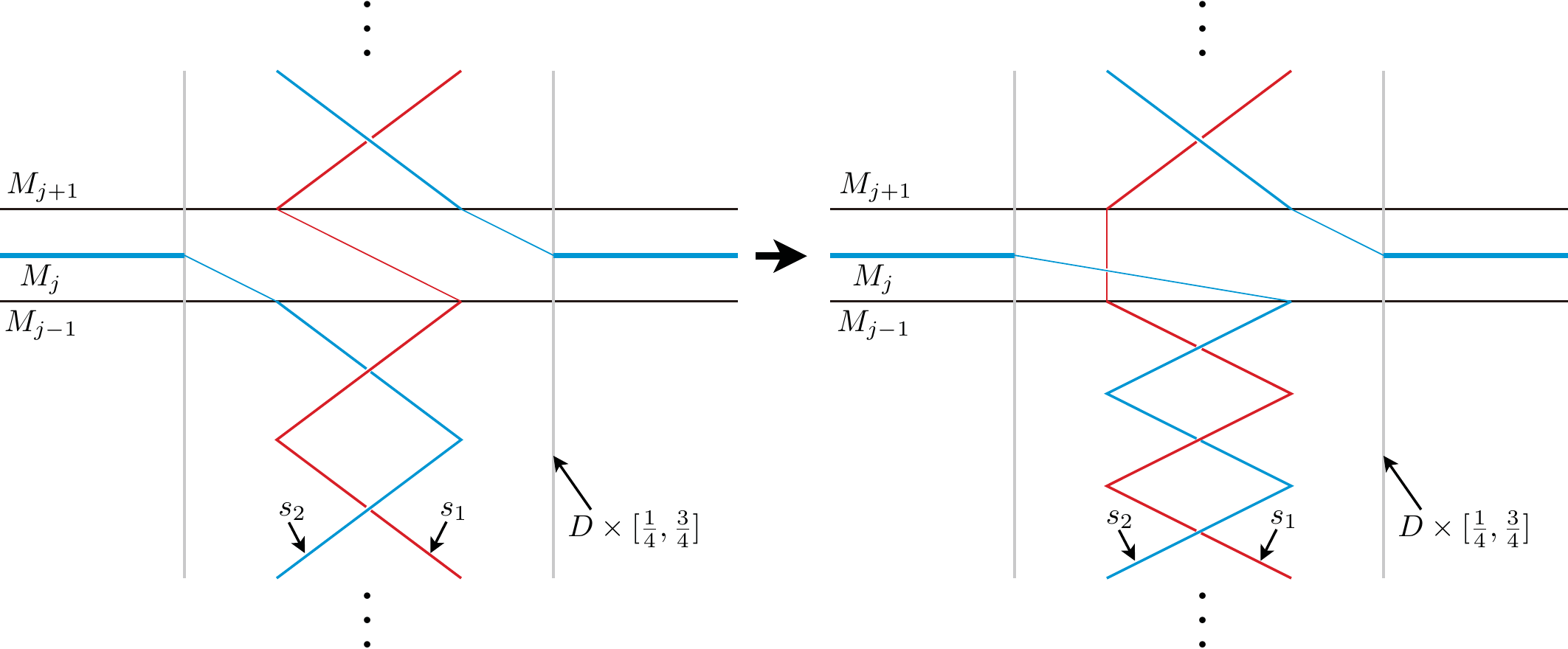}
\caption{the position-changing half-twist\label{fig-position-change}}
\end{figure} 
If the $2$-braid in $M_{j-1}$ is left-handed (right-handed resp.), then we add a left-handed (right-handed resp.) position-changing half-twist for the sake of convenience.
\end{enumerate}

\Step{2: Untying $f(C_1^1)$ and $f(C_1^2)$ into vertical cylinders}

As we did in Step 1, $\partial_-\W_1^2\times [0,1]$ is divided into $n$-submanifolds $\cup_{i=1}^{n}(\partial_-\W_1^2\times[a_{i-1},a_i])$ with respect to $s_1\cup s_2$ satisfying the follows.
\begin{enumerate} 
\item $a_0=0$, $a_n=1$, and $a_{i-1}<a_{i}$,
\item If $s_k$  intersects $(\partial_-\W_1^2-D)\times [a_{i-1},a_i]$ for $k=1$ or $2$, then a subarc of $s_k$ travels horizontally in $(\partial_-\W_1^2-D)\times \{a_{i-1}'\}$ for some $a_{i-1}<a_{i-1}'<a_i$ and $s_l\cap \operatorname{int}(D)\times [a_{i-1},a_i]$ is a vertical subarc for $l\neq k$.\label{step2a}
\item If $(s_1\cup s_2)\cap((\partial_-\W_1^2-D)\times [a_{i-1},a_i])=\emptyset$, then $(s_1\cup s_2)\cap(D\times [a_{i-1},a_i])$ is a standardly positioned $2$-braid in $D\times[a_{i-1},a_i]$ with respect to the vertical direction.\label{step2b}\\
\end{enumerate}

Let us consider $(s_1\cup s_2)\cap (D\times[a_0,a_1])$ which is a $2$-braid consisting of subarcs of $s_1\cup s_2$.
This $2$-braid can be written by $\sigma^k$ for $k\in\mathbb{Z}$, where $\sigma$ is a right-handed half-twist between the corresponding subarcs of $s_1\cup s_2$.
Let $D'$ be a disk in the interior of $D$ such that $D'\times [a_0, a_1]$ contains the two cylinders $(f(C_1^1)\cup f(C_2^1))\cap(D\times[a_0,a_1])$.
Here, we can isotope $f$ so that this makes this $2$-braid into a new $2$-braid with the representation $\sigma^{k'}$ such that $|k'|=|k|-1$ (see Figure \ref{fig-untwisting}) and therefore we 
can repeat such  isotopy over and over again until $s_1\cup s_2$ becomes vertical strands in $D\times [a_0, a_1]$.
Note that we can assume that this isotopy does not affect the outside of $(D'\times [a_0,a_1])$ in $f(\W_1^1)$, does affect only a small product neighborhood of $D'$ in $f(\V_2^1)$, but the compression body $f(\W_1^1)$ is preserved at any time during this isotopy  setwisely.
\begin{figure}
\includegraphics[width=6cm]{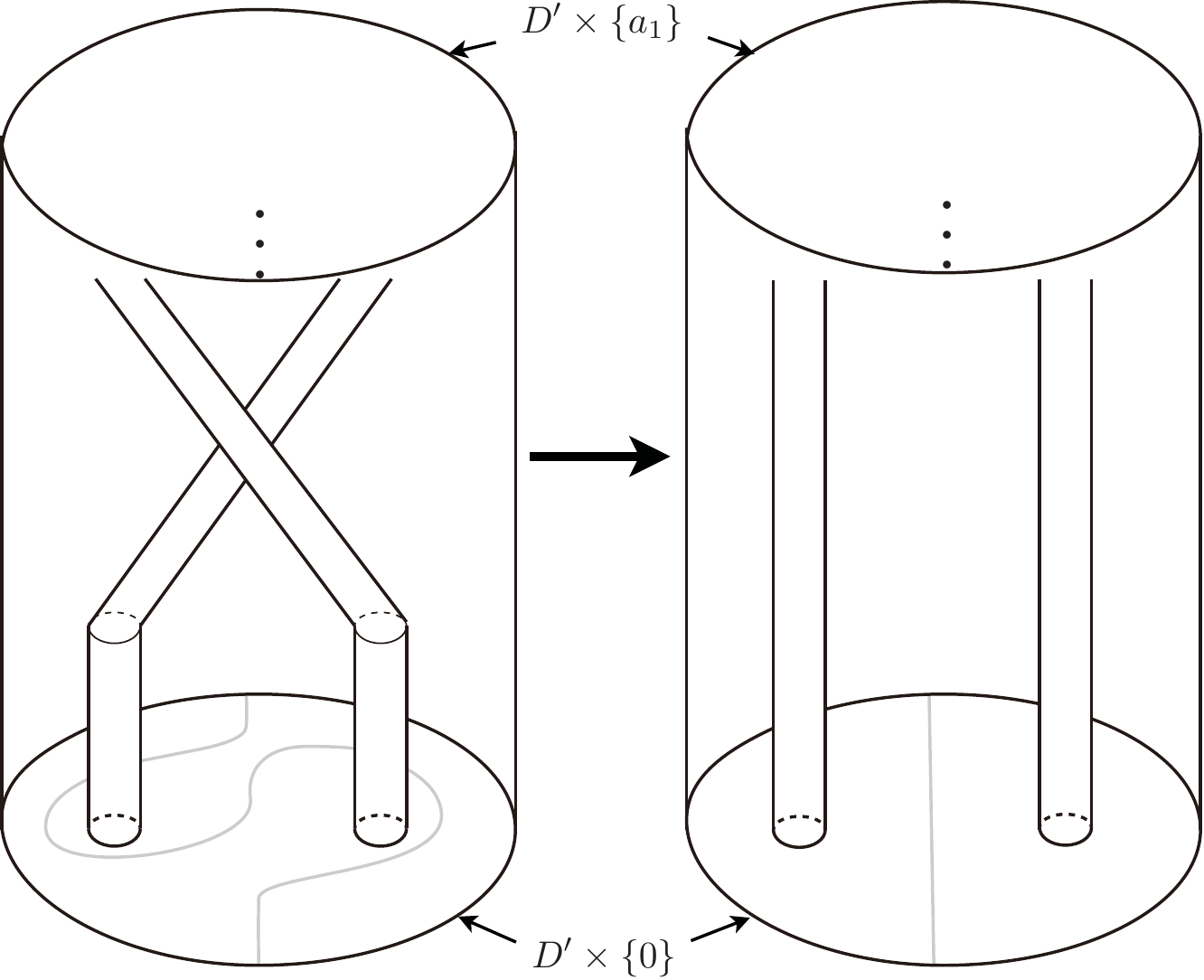}
\caption{untwisting a $2$-braid
 \label{fig-untwisting}}
\end{figure}
After this isotopy, $(s_1\cup s_2)\cap (D\times[a_0,a_1])$ becomes a trivial $2$-braid.
Therefore, if $n=1$, then we've isotoped $f$ so that $\alpha_2'$ became vertical in $\partial_-\W_1^2\times I$ and we've reached the end of the proof of Claim E.1-A.

Hence, we get $n>1$ and therefore $s_i\cap ((\partial_-\W_1^2-D)\times[a_1,a_2])\neq \emptyset$ for some $i=1$ or $2$, say $s_1$.
In this case, $s_2$ is vertical in $D\times[a_1,a_2]$ and a subarc of $s_1$ travels in $(\partial_-\W_1^2-D)\times\{a_1'\}$ for some $a_1< a_1'< a_2$ during the time when it leaves $D\times[a_1,a_2]$.
If we shrink $f(C_1^1)$ into sufficiently thinner one in $\partial_-\W_1^2\times [0,a_2]$ by an isotopy of $f$ and project $C=f(C_1^1)\cap (\partial_-\W_1^2\times[0,a_2])$ into $\bar{F_2}_{V_2 W_2}$, then we get an annulus, say the ``\textit{shadow}'', and denote it as $R$ (see Figure \ref{fig-travel}) and $R\cap D$ is a rectangle which divides $D$ into two pieces.
\begin{figure}
\includegraphics[width=12cm]{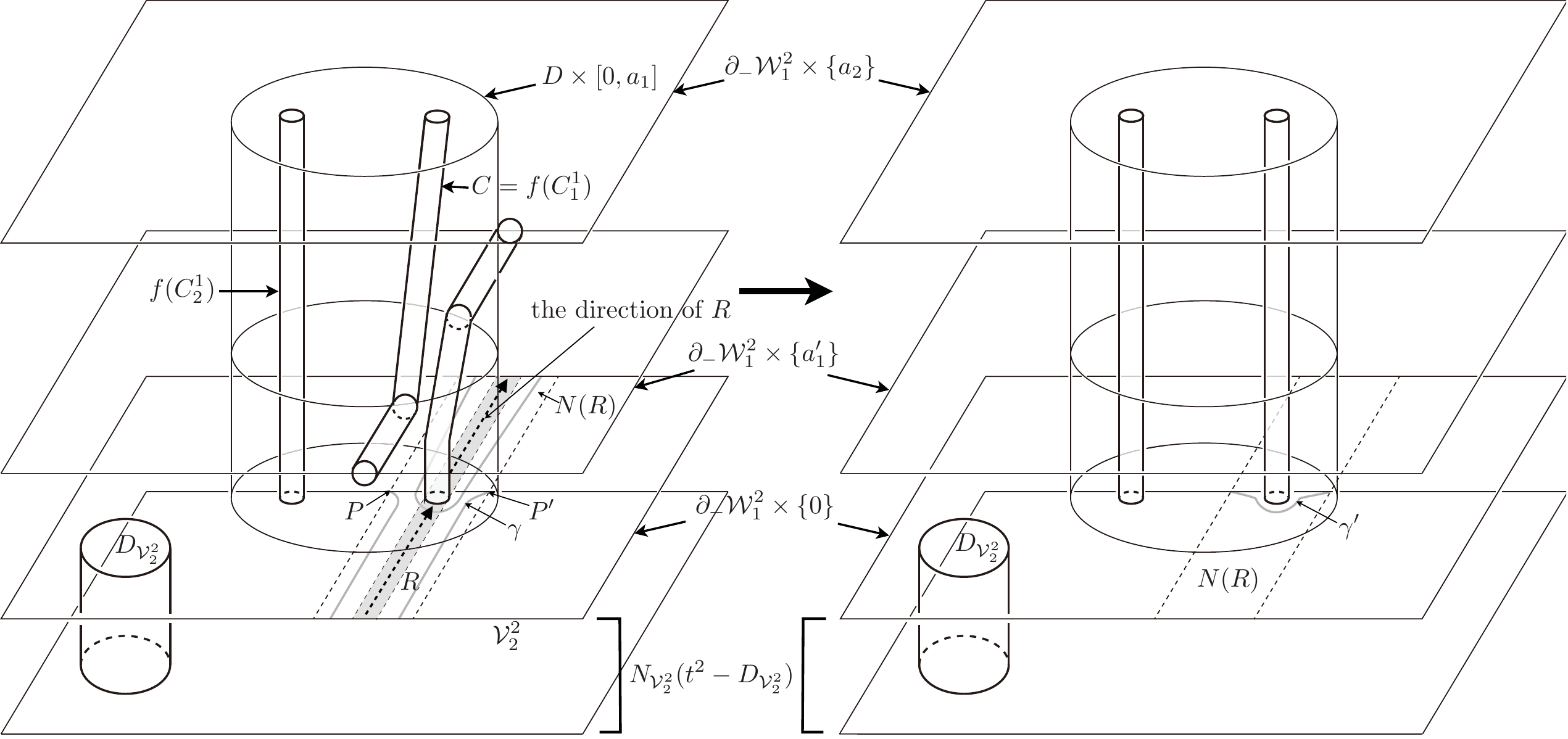}
\caption{$C$ becomes a vertical cylinder.\label{fig-travel}}
\end{figure}
If $R\cap D_{\V_2^2}\neq \emptyset$, then we isotope $f$ so that $R$ would miss $D_{\V_2^2}$ (see the left of Figure \ref{fig-travel}).
Choose a small neighborhood of $R$ in $\bar{F_2}_{V_2 W_2}$, say $N(R)$, so that (i) $\partial N(R)\cap\operatorname{int}(D)$ would consist of two arcs, (ii) $N(R)\cap D_{\V_2^2}= \emptyset$, and (iii) $N(R)\cap f(C_2^1)=\emptyset$.
We can give the canonical direction to the core circle of $R$ such that it follows the direction where the level of $s_1$ increases.
Choose two points $P$ and $P'$ in $\partial N(R)$ such that they are contained in different components of $\partial N(R)\cap\operatorname{int}(D)$.
Let $\pi_1(N(R))=<\alpha>$ where the direction of $\alpha$ is the same as the direction of the core circle of $R$.
Consider a curve $\gamma$ such that $\gamma$ starts from $P$, it travels the interior of $N(R)$ as much as $\alpha^{-1}$, turns around along the half of $\partial N(C\cap\bar{F_2}_{V_2 W_2})$, where $N(C\cap\bar{F_2}_{V_2 W_2})$ means a sufficiently small neighborhood of $C\cap\bar{F_2}_{V_2 W_2}$ in $\bar{F_2}_{V_2 W_2}$, and travels the interior of $N(R)$ as much as $\alpha$ until it ends at $P'$ (see the left of Figure \ref{fig-travel}).
Here, $\gamma$ meets $\partial D$ four times.
If we isotope $f$ so that $\gamma$ shrinks into a curve $\gamma'\subset N(R)$ contained in the interior of $D$, then we can assume that this isotopy make $C$ into a vertical cylinder and it does not affect $f(C_2^1)$ (see the right of Figure \ref{fig-travel}).
Moreover, we can assume that the compression body $f(\W_1^1)$ is preserved at any time during this isotopy setwisely.
(But it affects the image of $f$ in a small product neighborhood of $\bar{F_2}_{V_2 W_2}-D_{\V_2^2}$ in $\V_2^2$, say $N_{\V_2^2}(\bar{F_2}_{V_2 W_2}-D_{\V_2^2})$.)
After this isotopy, we can reduce the $n$-submanifolds $\cup_{i=1}^n(\partial_-\W_1^2\times[a_{i-1},a_i])$ of $\partial_-\W_1^2\times [0,1]$ into $(n-2)$-submanifolds.\\

Therefore, if we repeat the arguments in the previous paragraph, then $f(C_1^1)$ and $f(C_1^2)$ would become vertical cylinders in $\partial_-\W_1^2\times I$  and  $f(\mathbf{H}_1)=\mathbf{H}_2$ at any time during the isotopies of $f$ without affecting $D_{\V_2^2}$.

This completes the proof of Claim E.1-A.
\end{proofN}

\Case{1-B} $\partial_-\W_1^i$ consists of two tori, i.e. $V_i$ cuts off $(\text{torus})\times I$ from $\V_i$.

In this case, $\operatorname{cl}(\tilde{\W}_1^2- N(\tilde{V}_2))$ consists of two $(\text{torus})\times I$s, where $f(C_1^1)$ belongs to one $(\text{torus})\times I$ and $f(C_2^1)$ belongs to the other.
The relevant product structures are $\mathcal{X}_1^2$ and $\mathcal{X}_2^2$ in $\tilde{\W}_1^2$ (see the left of Figure \ref{fig-4-braid-1-B}).
\begin{figure}
\includegraphics[width=11cm]{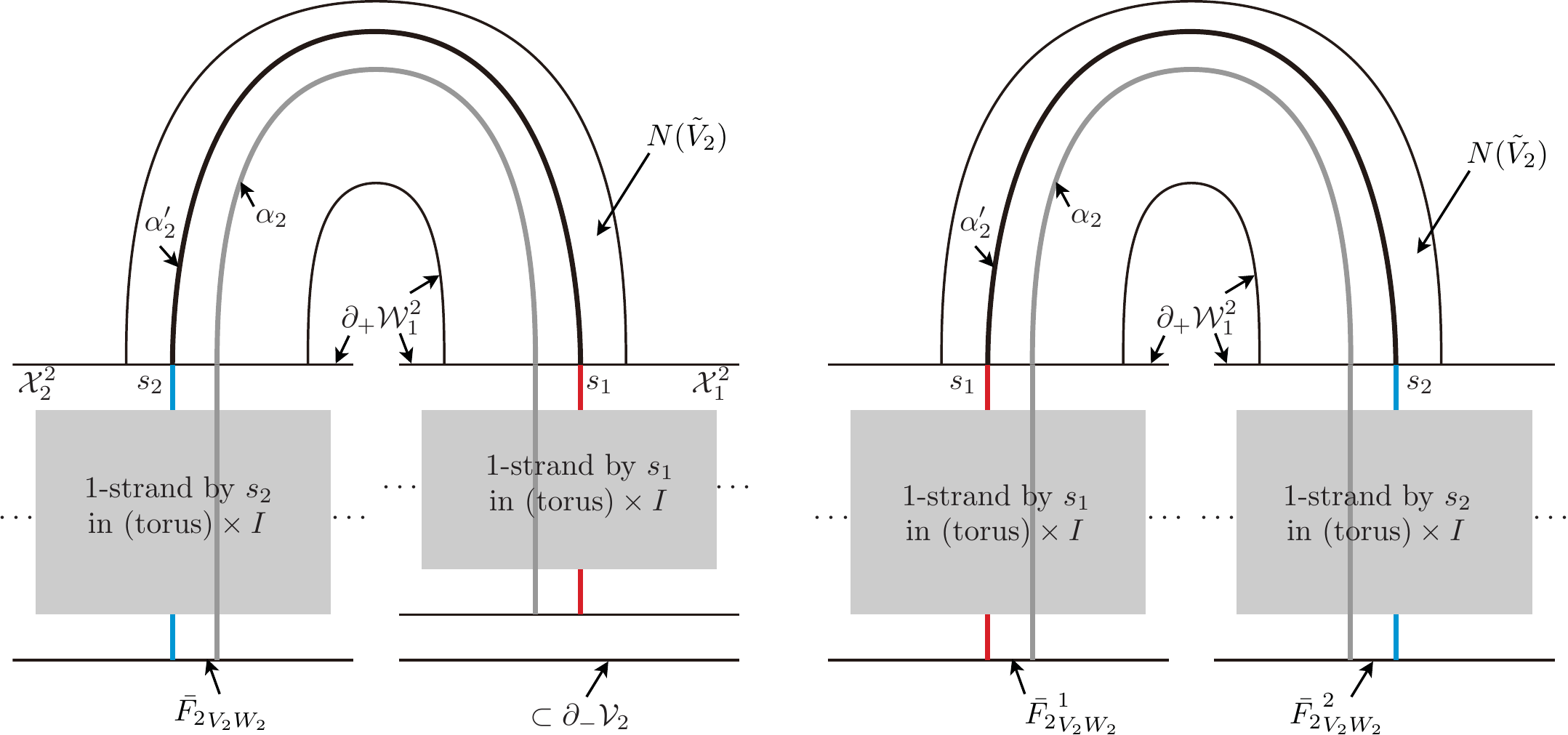}
\caption{the core arcs of $C_1^1\cup N(V_1)\cup C_2^1$ and $f(C_1^1\cup N(V_1)\cup C_2^1)$ when $\partial_-\W_1^2$ is disconnected
 \label{fig-4-braid-1-B}}
\end{figure}
Therefore,  we only need to consider a $1$-strand in each of $\mathcal{X}_1^2$ and $\mathcal{X}_2^2$.
Moreover, $\mathcal{X}_1^2\cap \bar{F_2}_{V_2 W_2}=\emptyset$, i.e. we don't worry about the possibility that $f(C_1^1)$ meets $f(\V_2^1\cap \W_1)$ in the untying procedure in $\mathcal{X}_1^2$.
This means that the untying procedure is more easier than Case 1-A.
Hence, we can isotope $\alpha_2'$ so that it would be vertical in $\partial_-\tilde{\W}_1^2\times I$ without affecting $D_{\V_2^2}$ by the similar arguments as in Case 1-A.
We can assume that this isotopy preserves $f(\tilde{\W}_1^1)$ setwisely as well as $f(\W_1^1)$.\\

\Case{2} $\bar{F_i}_{V_i W_i}$ consists of two tori $\bar{F_i}_{V_i W_i}^1$ and $\bar{F_i}_{V_i W_i}^2$.

If we consider $\operatorname{cl}(\W_1^2- N(\tilde{V}_2))$, then it consists of two $(\text{torus})\times I$s, where $f(C_1^1)$ belongs to one $(\text{torus})\times I$ and $f(C_1^2)$ belongs to the other.
Hence, the untying procedure is essentially the same as Case 1-B except that both $f(C_1^1)$ and $f(C_1^2)$ intersect $\bar{F_2}_{V_2 W_2}$ (see the right of Figure \ref{fig-4-braid-1-B}).
Hence, we can isotope $\alpha_2'$ so that it would be vertical in $\partial_-\W_1^2\times I$ without affecting $D_{\V_2^2}^j$ for $j=1,2$ by the similar arguments as in Case 1-B.\\

After the untying procedure in Case 1-A, Case 1-B or Case 2, $\alpha'$ becomes to be apparentely parallel to $\alpha$ in $\W_1^2$.
Moreover, we can isotope $f$ so that the cylinder $f(C_1^1\cup N(V_1)\cup C_2^1)$ would be moved into $C_1^2\cup N(V_2)\cup C_2^2$ in $\W_1^2$ without affecting $D_{\V_2^2}$.
This means that we have isotoped $f(\W_1^1\cap\V_1)$ into $\W_1^2\cap\V_2$  without affecting $f(\V_2^1\cap\W_1)$ after Claim D.
Moreover, these isotopies satisfy $f(\mathbf{H}_1)=\mathbf{H}_2$ at any time.\\

\Step{B} After Step A, if we use the symmetric arguments in Step A, then we can isotope $f(\V_2^1\cap\W_1)$ into $\V_2^2\cap \W_2$  without affecting $f(\W_1^1\cap \V_1)$ at any time.
Moreover, this isotopy satisfies $f(\mathbf{H}_1)=\mathbf{H}_2$ at any time.\\

After Step A and Step B, $f$ have been isotoped so that $f(\W_1^1\cap\V_1)=\W_1^2\cap\V_2$ and $f(\V_2^1\cap\W_1)=\V_2^2\cap \W_2$ satisfing $f(\mathbf{H}_1)=\mathbf{H}_2$ at any time during the isotopy.

This completes Lemma \ref{lemma-spines}.
\end{proof}

By using the isotopies of Lemma \ref{lemma-spines}, $f$ has been isotoped so that it satisfies the following equation.
\begin{eqnarray*}
f(\V_1)&=&f(\operatorname{cl}(\V_1^1\cup\V_2^1-(\V_2^1\cap\W_1))\cup (\W_1^1\cap\V_1))\\
	&=&\operatorname{cl}(f(\V_1^1)\cup f(\V_2^1)-f(\V_2^1\cap\W_1))\cup f(\W_1^1\cap\V_1)\\
	&=&\operatorname{cl}(\V_1^2 \cup \V_2^2 -(\V_2^2\cap\W_2))\cup (\W_1^2\cap\V_2) = \V_2.
\end{eqnarray*}
This completes the proof of Theorem \ref{lemma-determine-GHSs}.
\end{proof}

\begin{definition}\label{definition-HS-GHS}
Let $F$ be a weakly reducible, unstabilized Heegaard surface of genus three in an orientable, irreducible $3$-manifold $M$.
Let $\mathcal{GHS}_F$ be the set of isotopy classes of the generalized Heegaard splittings obtained by weak reductions from $(\V,\W;F)$.
If there is a generalized Heegaard splitting $\mathbf{H}$ obtained by weak reduction from $(\V,\W;F)$ and its isotopy class is $[\mathbf{H}]\in\mathcal{GHS}_F$, then we call $\mathbf{H}$ a \textit{representative of $[\mathbf{H}]$ coming from weak reduction}.
We will say two representatives $\mathbf{H}_1=\{\bar{F}_{V_1},\bar{F}_{V_1 W_1},\bar{F}_{W_1}\}$ and $\mathbf{H}_2=\{\bar{F}_{V_2},\bar{F}_{V_2 W_2},\bar{F}_{W_2}\}$ of $[\mathbf{H}]\in\mathcal{GHS}_F$ coming from weak reductions are \textit{equivalent} if (i) $\bar{F}_{V_1}$ is isotopic to $\bar{F}_{V_2}$ in $\V$, (ii) $\bar{F}_{W_1}$ is isotopic to $\bar{F}_{W_2}$ in $\W$, and (iii) $\bar{F}_{V_1 W_1}$ is isotopic to $\bar{F}_{V_2 W_2}$ in $M$.

Suppose that  $\bar{F}_{V_1}$ is isotopic to $\bar{F}_{V_2}$ in $\V$ for two representatives $\mathbf{H}_1=\{\bar{F}_{V_1},\bar{F}_{V_1 W_1},\bar{F}_{W_1}\}$ and $\mathbf{H}_2=\{\bar{F}_{V_2},\bar{F}_{V_2 W_2},\bar{F}_{W_2}\}$ of some isotopy classes $[\mathbf{H}_1]$ and $[\mathbf{H}_2]\in\mathcal{GHS}_F$ respectively coming from weak reductions.
If we recall the proof of Theorem \ref{theorem-structure} in \cite{JungsooKim2014-2}, then $(V_1, W_1)$ and $(V_2,W_2)$ belong to the same component of $\DVW(F)$ and therefore $\bar{F}_{W_1}$ is isotopic to $\bar{F}_{W_2}$ in $\W$ and the thin level $\bar{F}_{V_1 W_1}$ is isotopic to $\bar{F}_{V_2 W_2}$ in $M$ by the definitions of building blocks and Theorem \ref{lemma-just-BB}.
Moreover, $\mathbf{H}_1$ is isotopic to $\mathbf{H}_2$ as well as each thick or thin level is isotopic to the relevant thick or thin level, i.e. $[\mathbf{H}_1]=[\mathbf{H}_2]$ in $\mathcal{GHS}_F$.
This means that $\mathbf{H}_1$ is equivalent to $\mathbf{H}_2$.
Therefore, \textit{$\mathbf{H}_1$ is equivalent to $\mathbf{H}_2$  if and only if at least one thick level of one representative is isotopic to that of the other in the relevant compression body}.

Obviously, this gives an equivalent relation to the set of all representatives of the elements of $\mathcal{GHS}_F$ coming from weak reductions.
Let $\overline{\mathcal{GHS}}_F$ be the set of all these equivalent classes and we denote the equivalent class of a representative $\mathbf{H}$ as $(\mathbf{H})$.
If there is a component of $\DVW(F)$, then every weak reducing pair in the component gives the same equivalent class in $\overline{\mathcal{GHS}}_F$ after weak reduction by Theorem \ref{theorem-structure}.
Hence, this defines the function $\Phi_F:\{\text{the components of }\DVW(F)\}\to\overline{\mathcal{GHS}}_F$.\\

\ClaimN{A}{
$\Phi_F$ is bijective.}

\begin{proofN}{Claim A}
If we consider an element of $\overline{\mathcal{GHS}}_F$, then there must be a weak reducing pair in $\DVW(F)$ realizing a representative of the equivalent class by weak reduction.
This gives the component of $\DVW(F)$ containing the weak reducing pair, i.e. $\Phi$ is surjective.

Suppose that $\Phi_F(\mathcal{B}_1)=\Phi_F(\mathcal{B}_2)$ for some components $\mathcal{B}_1$ and $\mathcal{B}_2$ of $\DVW(F)$.
This means that every weak reducing pair in $\mathcal{B}_1\cup\mathcal{B}_2$ gives the same equivalent class in $\overline{\mathcal{GHS}}_F$ by weak reduction, i.e. this gives a uniquely determined isotopy class of the thick level contained in $\V$.
Hence, Theorem \ref{theorem-structure} induces $\mathcal{B}_1=\mathcal{B}_2$ and therefore $\Phi_F$ is injective.

This completes the proof of Claim A.
\end{proofN}

By Claim A, $\Phi_F$ gives a one-to-one correspondence between the components of $\DVW(F)$ and the equivalent classes in $\overline{\mathcal{GHS}}_F$.
\end{definition}

Finally, we reach Corollary \ref{corollary-GHS-HS}.

\begin{corollary}[Theorem \ref{theorem-GHS-HS}]\label{corollary-GHS-HS}
Let $(\V,\W;F)$ and $(\V',\W';F')$ be weakly reducible, unstabilized, genus three Heegaard splittings in an orientable, irreducible $3$-manifold $M$ and $f$ an orientation-preserving automorphism of $M$.
Then $f$ sends $F$ into $F'$ up to isotopy if and only if $f$ sends a representative of an element of $\mathcal{GHS}_F$ coming from weak reduction into a representative of an element of $\mathcal{GHS}_{F'}$ coming from weak reduction up to isotopy.
\end{corollary}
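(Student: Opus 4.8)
The plan is to prove the two implications separately, deriving the substantive ($\Leftarrow$) direction from Theorem \ref{lemma-determine-GHSs} after a short reduction, and verifying the ($\Rightarrow$) direction by a naturality argument. For ($\Rightarrow$), suppose $f$ sends $F$ into $F'$ up to isotopy and isotope $f$ so that $f(F)=F'$. Since a Heegaard surface is connected and separating, $\{f(\V),f(\W)\}=\{\V',\W'\}$, and after relabelling (the construction of a weak reduction being symmetric in the two compression bodies, so that $\mathcal{GHS}_{F'}$ does not depend on the labelling of the sides of $F'$) I may assume $f(\V)=\V'$ and $f(\W)=\W'$. As the splitting is weakly reducible, choose a weak reducing pair $(V,W)$; Lemma \ref{lemma-2-8} guarantees $F_{VW}$ has no $2$-sphere component, so weak reduction along $(V,W)$ is defined and yields a representative $\mathbf{H}$ of an element of $\mathcal{GHS}_F$ coming from weak reduction. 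Every step producing $\mathbf{H}$ from $(\V,\W;F)$ — compressing $F$ along $V$ and along $W$, pushing the genus two components off into $\V$ and $\W$, keeping the components of $F_{VW}$ with scars of both disks, and cleaning away product regions between a thick and a thin level — is natural with respect to the homeomorphism $f$, so $f(\mathbf{H})$ is precisely the generalized Heegaard splitting obtained by weak reduction from $(\V',\W';F')$ along $(f(V),f(W))$. Thus $f(\mathbf{H})$ is a representative of an element of $\mathcal{GHS}_{F'}$ coming from weak reduction, which is what is needed.

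For ($\Leftarrow$), suppose $f(\mathbf{H}_1)$ is isotopic to $\mathbf{H}_2$, where $\mathbf{H}_1$ and $\mathbf{H}_2$ are representatives of elements of $\mathcal{GHS}_F$ and $\mathcal{GHS}_{F'}$ respectively coming from weak reduction. First I would pass to centers: $\mathbf{H}_1$ comes from weak reduction along some weak reducing pair of $(\V,\W;F)$, which lies in some component $\mathcal{B}_1$ of $\DVW(F)$ with center $(V_1,W_1)$, and by Lemma \ref{lemma-2-17} the generalized Heegaard splitting $\mathbf{H}_1^{c}$ obtained by weak reduction along $(V_1,W_1)$ is isotopic to $\mathbf{H}_1$; likewise on the $F'$ side one obtains a component $\mathcal{B}_2$ of $\DVPWP(F')$ with center $(V_2,W_2)$ and a generalized Heegaard splitting $\mathbf{H}_2^{c}$ isotopic to $\mathbf{H}_2$. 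Then $f(\mathbf{H}_1^{c})$ is isotopic to $f(\mathbf{H}_1)$, hence to $\mathbf{H}_2$, hence to $\mathbf{H}_2^{c}$, so after an isotopy of $f$ one has $f(\mathbf{H}_1^{c})=\mathbf{H}_2^{c}$. Now the hypotheses of Theorem \ref{lemma-determine-GHSs} hold for the data $\mathcal{B}_1,\mathcal{B}_2,(V_1,W_1),(V_2,W_2),\mathbf{H}_1^{c},\mathbf{H}_2^{c}$, and that theorem furnishes an isotopy $f_t$ with $f_0=f$ and $f_1(F)=F'$; hence $f$ sends $F$ into $F'$ up to isotopy.

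The hard part is entirely absorbed into Theorem \ref{lemma-determine-GHSs}, whose proof supplies the delicate ``untying isotopies'' of the cocore cylinders $f(\W_1^1\cap\V_1)$ and $f(\V_2^1\cap\W_1)$ and shows they can be carried out with only harmless mutual interference. Granting that theorem, the remaining points are light: the naturality of weak reduction under $f$ in the ($\Rightarrow$) direction, which is immediate from the construction, and in the ($\Leftarrow$) direction the observation — via Lemma \ref{lemma-2-17} — that a representative coming from weak reduction is, up to isotopy, a weak reduction along the center of its component. The one bookkeeping subtlety worth flagging is exactly this last point: a given representative need not itself be a weak reduction along a center, so one cannot invoke Theorem \ref{lemma-determine-GHSs} for $\mathbf{H}_1,\mathbf{H}_2$ directly but must route through $\mathbf{H}_1^{c},\mathbf{H}_2^{c}$.
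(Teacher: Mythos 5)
Your proposal is correct and follows essentially the same route as the paper: the ($\Rightarrow$) direction is the paper's Claim A (naturality of every step of weak reduction under the homeomorphism $f$ with $f(F)=F'$), and the ($\Leftarrow$) direction passes from the given representatives to the weak reductions along the centers of the containing components of $\DVW(F)$ and $\DVPWP(F')$ before invoking Theorem \ref{lemma-determine-GHSs}, exactly as the paper does via the bijections $\Phi_F$ and $\Phi_{F'}$. The bookkeeping subtlety you flag (that one must route through the centers rather than apply Theorem \ref{lemma-determine-GHSs} to the given representatives directly) is precisely the reduction the paper performs.
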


\begin{proof}
$(\Rightarrow)$ Suppose that $f$ sends $F$ into $F'$ up to isotopy.
That is, we can isotope $f$ so that $f(F)=F'$.
Let $[\mathbf{H}]$ be an element of $\mathcal{GHS}_F$.
Then there is a weak reducing pair $(V,W)$ of $(\V,\W;F)$ which gives a representative $\mathbf{H}=\{\bar{F}_{V},\bar{F}_{VW},\bar{F}_{W}\}$ of $[\mathbf{H}]$ coming from weak reduction.
If we consider the weak reducing pair determined by $\{f(V),f(W)\}$ of $F'$, then it gives the generalized Heegaard splitting $\mathbf{H}'=\{\bar{F}_{f(V)}',\bar{F}_{f(V) f(W)}',\bar{F}_{f(W)}'\}$ obtained by weak reduction from $(\V',\W';F')$.\\

\ClaimN{A}{$f(\mathbf{H})$ is a representative of an element of $\mathcal{GHS}_{F'}$ coming from weak reduction.
Moreover, $(\mathbf{H}')=(f(\mathbf{H}))$ in $\overline{\mathcal{GHS}}_{F'}$.}

\begin{proofN}{Claim A}
Recall that $f(F)=F'$.
Without loss of generality, assume that $f(V)\subset \V'$ and $f(W)\subset \W'$, i.e. $f(\bar{F}_V)\subset \V'$ and $f(\bar{F}_W)\subset \W'$.

Let us consider $f(\mathbf{H})=\{f(\bar{F}_{V}), f(\bar{F}_{V W}), f(\bar{F}_W)\}$ and observe the compressing disks $f(V)$ and $f(W)$.
Let $\tilde{\V}$ be the region in $\V$ between the genus two component of $F_V$ and $\bar{F}_{V}$ where ``the genus two component of $F_V$'' is the one used when we obtained the thick level $\bar{F}_{V}$.
Let $N_\V(V)$ be the product neighborhood of $V$ in $\V$ which was used when we compressed $F$ along $V$ to obtain $F_V$.
Then $\tilde{\V}$ is homeomorphic to $(\text{genus two surface})\times I$ whose $0$-level is $\bar{F}_{V}$.
Hence, $f(\tilde{\V})$ is homeomorphic to $(\text{genus two surface})\times I$ whose $0$-level is $f(\bar{F}_V)$.
Moreover, the $1$-level of $f(\tilde{\V})$ is the genus two component of $F_{f(V)}'$ if we compress $F'$ along $f(V)$ by using $f(N_\V(V))$ as the product neighborhood of $f(V)$ in $\V'$.
Therefore, we can easily check the follows.
\begin{enumerate}
\item $f(\bar{F}_{V})$ is obtained by pushing the genus two component of $F'_{f(V)}$ off into the interior of $\V'$.
\item $f(\bar{F}_{W})$ is obtained by pushing the genus two component of $F'_{f(W)}$ off into the interior of $\W'$ similarly.
\item $f(\bar{F}_{V W})$ is the union of components of $F'_{f(V) f(W)}$ having scars of both $f(V)$ and $f(W)$ similarly as $\bar{F}_{VW}$ because the images of the product neighborhoods of $V$ and $W$ in $\V$ and $\W$ which we used when we compressed $F$ along $V$ and $W$ to obtain $\bar{F}_{VW}$ of $f$ are also product neighborhoods of $f(V)$ and $f(W)$ in $\V'$ and $\W'$ respectively.
\end{enumerate}
Hence, $f(\mathbf{H})$ is the  generalized Heegaard splitting obtained by weak reduction from $(\V',\W';F')$ along the weak reducing pair $(f(V),f(W))$ by Lemma \ref{lemma-four-GHSs}.

This completes the proof of Claim A.
\end{proofN}

By Claim A, $(\mathbf{H}')=(f(\mathbf{H}))$ in $\overline{\mathcal{GHS}}_{F'}$, i.e. $f(\mathbf{H})$ is isotopic to $\mathbf{H}'$.
In other words, $f$ sends $\mathbf{H}$ into $\mathbf{H}'$ up to isotopy.\\

$(\Leftarrow)$ Suppose that $f$ sends  a representative $\mathbf{H}=\{\bar{F}_{V},\bar{F}_{VW},\bar{F}_{W}\}$ of an element $[\mathbf{H}]\in \mathcal{GHS}_F$ coming from weak reduction into a representative $\mathbf{H}'=\{\bar{F}_{V'}',\bar{F}_{V'W'}',\bar{F}_{W'}'\}$ of an element $[\mathbf{H}']\in \mathcal{GHS}_{F'}$ coming from weak reduction up to isotopy.
That is, we can isotope $f$ so that $f(\mathbf{H})=\mathbf{H}'$.

Let $(\bar{V},\bar{W})$ and $(\bar{V}',\bar{W}')$ be the centers of the components $\mathcal{B}$ and $\mathcal{B}'$ of $\DVW(F)$ and $\DVPWP(F')$ containing the weak reducing pairs $(V,W)$ and $(V',W')$ respectively.
Then we get two generalized Heegaard splittings $\bar{\mathbf{H}}=\{\bar{F}_{\bar{V}},\bar{F}_{\bar{V}\bar{W}},\bar{F}_{\bar{W}}\}$ and $\bar{\mathbf{H}}'=\{\bar{F}_{\bar{V}'}',\bar{F}_{\bar{V}'\bar{W}'}',\bar{F}_{\bar{W}'}'\}$ obtained by weak reductions from $(\V,\W;F)$ and $(\V',\W';F')$ respectively.
Here, (i) $(\bar{\mathbf{H}})=(\mathbf{H})$ in $\overline{\mathcal{GHS}}_F$ and (ii) $(\bar{\mathbf{H}}')=(\mathbf{H}')$ in $\overline{\mathcal{GHS}}_{F'}$ by considering the functions $\Phi_F$ and $\Phi_{F'}$.
That is, (i) induces that $\mathbf{H}$ is isotopic to $\bar{\mathbf{H}}$ by an isotopy $h_t$ such that $h_0$ is the identity and  $h_1(\mathbf{H})= \bar{\mathbf{H}}$, and therefore $\mathbf{H}'=f(\mathbf{H})$ is isotopic to $f(\bar{\mathbf{H}})$ by the isotopy $f\circ h_t\circ f^{-1}$.
Since $\mathbf{H}'$ is isotopic to $\bar{\mathbf{H}}'$ by (ii), we conclude that $f(\bar{\mathbf{H}})$ is isotopic to $\bar{\mathbf{H}}'$.
This means that we can isotope $f$ so that $f(\bar{\mathbf{H}})=\bar{\mathbf{H}}'$ by using the argument in Definition \ref{def-isotopy}.
Therefore, Theorem \ref{lemma-determine-GHSs} induces that we can isotope $f$ so that $f(F)=F'$.

This completes the proof.
\end{proof}

\section{The proof of Theorem \ref{theorem-main-2}\label{section4}}

In this section, we will prove Theorem \ref{theorem-main-2}.

\begin{definition}
Let $\mathcal{F}$ be the set of isotopy classes of weakly reducible, unstabilized Heegaard surfaces of genus three in $M$.
Now we define $\mathcal{GHS}=\cup_{[F]\in\mathcal{F}}\mathcal{GHS}_F$, where  we take exactly one representative $F$ for each isotopy class $[F]$.
Suppose that $F$ is isotopic to $F'$ in $M$ by an isotopy $h_t$ such that $h_0(F)=\operatorname{id}(F)=F$ and $h_1(F)=F'$.
Then we get a $1$-parameter family of Heegaard splittings $\{(\V_t,\W_t;F_t)\}_{0\leq t\leq 1}$ such that $F_0=F$ and $F_1=F'$ for $\V_t=h_t(\V)$, $\W_t=h_t(\W)$, and $F_t = h_t(F)$.
Let $\mathbf{H}$ be a representative of an element of $\mathcal{GHS}_{F}$ coming from weak reduction along a weak reducing pair $(V,W)$.
If we consider the weak reducing pair $(h_t(V),h_t(W))$ of $F_t$, then it gives the generalized Heegaard splitting $\mathbf{H}_t$ obtained by weak reduction from $(\V_t,\W_t;F_t)$.
Here, Claim A in Corollary \ref{corollary-GHS-HS} induces that $h_t(\mathbf{H})$ is a representative of an element of $\mathcal{GHS}_{F_t}$ coming from weak reduction  and $(\mathbf{H}_t)=(h_t(\mathbf{H}))$  in $\overline{\mathcal{GHS}}_{F_t}$ for $0\leq t\leq 1$.
Hence, we can see that (i) the isotopy $h_t$ sends $\mathbf{H}$ into $h_1(\mathbf{H})$ and (ii) $(\mathbf{H}_1)=(h_1(\mathbf{H}))$ in $\overline{\mathcal{GHS}}_{F'}$, i.e. the isotopy class $[\mathbf{H}]$ is the same as $[\mathbf{H}_1]$ and therefore each element of $\mathcal{GHS}_F$ belongs to $\mathcal{GHS}_{F'}$.
If we consider the isotopy $h_{1-t}\circ h_1^{-1}$ from $F'$ to $F$, then we can see that each element of $\mathcal{GHS}_{F'}$ belongs to $\mathcal{GHS}_{F}$ by the symmetric argument, i.e. $\mathcal{GHS}_{F}=\mathcal{GHS}_{F'}$.
This is why we take only one representative for each element of $\mathcal{F}$ in the union.

Let $\widetilde{\mathcal{GHS}}$ be the set of isotopy classes of the generalized Heegaard splittings consisting of two non-trivial Heegaard splittings of genus two.
Therefore, every representative of $\widetilde{\mathcal{GHS}}$ must be of the form $(\V_1,\V_2;T_1)\cup_t (\W_1,\W_2;T_2)$, where $\partial_-\V_2\cap \partial_-\W_1=t$ ($t$ is a torus or two tori) and the genera of $T_1$ and $T_2$ are both two.
\end{definition}

If we add the assumption that the minimal genus of Heegaard splittings in $M$ is three, then we get the following lemma.

\begin{lemma}\label{lemma-GHS-GHSS}
Let $M$ be an orientable, irreducible $3$-manifold admitting a weakly reducible, unstabilized Heegaard splitting of genus three and assume that the minimal genus of $M$ is three.
Then $\widetilde{\mathcal{GHS}}=\mathcal{GHS}$.
\end{lemma}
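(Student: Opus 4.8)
The plan is to prove the set equality $\widetilde{\mathcal{GHS}}=\mathcal{GHS}$ by establishing the two inclusions separately. The inclusion $\mathcal{GHS}\subseteq\widetilde{\mathcal{GHS}}$ is the easy direction. Take $[\mathbf{H}]\in\mathcal{GHS}$, so that $[\mathbf{H}]\in\mathcal{GHS}_F$ for some weakly reducible, unstabilized, genus three Heegaard surface $F$; that is, $\mathbf{H}$ is obtained by weak reduction from $(\V,\W;F)$ along some weak reducing pair $(V,W)$. By Lemma \ref{lemma-four-GHSs} the GHS $\mathbf{H}$ is of one of the five listed types, and in every case $\mathbf{H}=(\V_1,\V_2;\bar F_V)\cup_{\bar F_{VW}}(\W_1,\W_2;\bar F_W)$ where $\bar F_V$ and $\bar F_W$ have genus two, being push-offs of the genus two components of $F_V$ and $F_W$. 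Moreover, by Definition \ref{definition-WR} the GHS obtained by weak reduction is clean, so each of $(\V_1,\V_2;\bar F_V)$ and $(\W_1,\W_2;\bar F_W)$ is a non-trivial Heegaard splitting. Hence $\mathbf{H}$ consists of two non-trivial Heegaard splittings of genus two, i.e. $[\mathbf{H}]\in\widetilde{\mathcal{GHS}}$; note this direction does not use the minimal genus hypothesis.

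For the reverse inclusion $\widetilde{\mathcal{GHS}}\subseteq\mathcal{GHS}$, I would start from a representative $\mathbf{H}=(\V_1,\V_2;T_1)\cup_t(\W_1,\W_2;T_2)$ with $\partial_-\V_2\cap\partial_-\W_1=t$, with $T_1,T_2$ of genus two and both splittings non-trivial, and form the amalgamation $(\V,\W;F)$ of $(\V_1,\V_2)$ and $(\W_1,\W_2)$ along $t$ (Definition \ref{def-amalgamation}), which is a well-defined Heegaard splitting of $M$ by Proposition \ref{lemma-Lackenby}. I then want $(\V,\W;F)$ to be a weakly reducible, unstabilized Heegaard splitting of genus three, so that $[F]\in\mathcal{F}$. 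The genus is three: since each of $\V_2$ and $\W_1$ is a genus two compression body whose minus boundary has a torus (genus $g-1$) component, Lemma \ref{lemma-defining} shows each has a unique minimal defining set consisting of a single disk, so $\V$ is obtained from the genus two compression body $\V_1$ by attaching the single $1$-handle of $\W_1$ (dragged through the collar of $t$) and absorbing product collars, and symmetrically for $\W$; thus $\partial_+\V$ has genus three, as in the standard genus count for amalgamations. Non-triviality is clear since $\V$ contains the compressing disks of $\V_1$ and $\W$ those of $\W_2$. Unstabilizedness is where the hypothesis enters: a stabilized genus three splitting would destabilize to a genus two Heegaard splitting of $M$, contradicting that the minimal genus of $M$ is three. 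Finally, since $M$ is irreducible, a reducible splitting would be stabilized (see \cite{SaitoScharlemannSchultens2005}), so $(\V,\W;F)$ is weakly reducible or strongly irreducible, and the weak reducing pair produced below rules out the latter.

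It then remains to recover $\mathbf{H}$ from $(\V,\W;F)$ by weak reduction. I would take $V\subset\V$ to be the cocore disk of the unique $1$-handle of $\W_1$ and $W\subset\W$ the cocore disk of the $1$-handle of $\V_2$, viewed inside the amalgamated compression bodies as in Definition \ref{def-amalgamation}. Each is essential (a $1$-handle cocore of a non-trivial compression body), and because the attaching regions of the $1$-handles of $\V_2$ and $\W_1$ project to disjoint disks in $t$ by construction, $\partial V$ and $\partial W$ may be chosen disjoint in $F$; so $(V,W)$ is a weak reducing pair, and it is not a reducing pair (else the splitting would be reducible, hence stabilized). Then I would trace weak reduction along $(V,W)$ directly through the amalgamation construction: compressing $F$ along $V$ undoes exactly the tube cut from $T_2$ by the $1$-handle of $\W_1$, so the genus two component of $F_V$ is a push-off of $T_2$, giving $\bar F_V$ isotopic to $T_2$; symmetrically $\bar F_W$ is isotopic to $T_1$; and the components of $F_{VW}$ carrying scars of both $V$ and $W$ reassemble to $t$, so $\bar F_{VW}$ is isotopic to $t$ with no product piece to clean off. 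Hence the GHS obtained by weak reduction along $(V,W)$ is $\mathbf{H}$ up to isotopy, so $[\mathbf{H}]\in\mathcal{GHS}_F\subseteq\mathcal{GHS}$. The hard part will be precisely this last verification — that weak reduction along the pair dual to the amalgamating $1$-handles really returns the original GHS — which requires careful bookkeeping of the collars $N_0,L_0$ and the extended $1$-handles in Definition \ref{def-amalgamation}, together with checking that the cleaning step in Definition \ref{definition-WR} is vacuous here (using that both thick levels have genus two while every thin torus is genuinely thin). The genus computation and the unstabilizedness are routine by comparison.
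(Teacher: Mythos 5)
Your proposal follows the paper's proof almost step for step: the easy inclusion via Lemma \ref{lemma-four-GHSs}, then amalgamation of a representative of $[\mathbf{H}]$ as in Definition \ref{def-amalgamation}, the genus-three count, unstabilizedness from the minimal-genus hypothesis, the cocores of the two amalgamating $1$-handles as the weak reducing pair, and weak reduction along that pair to recover $\mathbf{H}$ (the paper's $(D,E)$ is your $(V,W)$, and it likewise verifies the five amalgamation configurations and invokes the last statement of Lemma \ref{lemma-four-GHSs} so that no cleaning is needed).

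One identification in your final step is backwards. Writing $F$ as $t$ with two tubes attached (one for each amalgamating $1$-handle), we have $T_2=\partial_+\W_1\cong t+(\text{tube of }\W_1\text{'s handle})$ and $T_1=\partial_+\V_2\cong t+(\text{tube of }\V_2\text{'s handle})$. Compressing $F$ along $V$ (the cocore of $\W_1$'s $1$-handle, which lies in $\V$) removes exactly the tube that builds $T_2$ out of $t$, so the genus two component of $F_V$ is $t$ plus the \emph{other} tube, i.e. it is isotopic to $T_1$, not $T_2$; symmetrically $F_W$ gives $T_2$. Your stated matching is in fact geometrically impossible: $\bar F_V$ is pushed into the interior of $\V$ and is a Heegaard surface of $\V_1\cup\V_2$, whereas $T_2$ is the Heegaard surface of $\W_1\cup\W_2$ on the far side of $t$. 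Because the GHS is the unordered set $\{\bar F_V,\bar F_{VW},\bar F_W\}$ and the corrected matching ($\bar F_V\cong T_1$, $\bar F_W\cong T_2$, $\bar F_{VW}\cong t$) still returns $\mathbf{H}$ up to isotopy, the argument survives once the labels are swapped; this is precisely the paper's conclusion ``if we compress $F$ along $D$ or $E$ and consider the genus two component, then it is isotopic to $T_1$ or $T_2$ respectively.''
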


\begin{proof}
By Lemma \ref{lemma-four-GHSs}, $\mathcal{GHS}\subset\widetilde{\mathcal{GHS}}$ is obvious.

Suppose that $\mathbf{H}=(\V_1,\V_2;T_1)\cup_t (\W_1,\W_2;T_2)$ is a representative of an element of $\widetilde{\mathcal{GHS}}$ ,where $\partial_-\V_2\cap \partial_-\W_1=t$.
Then we can express $\V_2$ as the union of $\partial_-\V_2\times I$ and a $1$-handle attached to $\partial_-\V_2\times \{1\}$ and the symmetric argument also holds for $\W_1$ since they are genus two compression bodies with non-empty minus boundary.
Hence, we obtain a Heegaard splitting $(\V,\W;F)$ by the amalgamation of $(\V_1,\V_2;T_1)$ and $(\W_1,\W_2;T_2)$ along $t$ with respect to the $1$-handle structures of $\V_2$ and $\W_1$ and a suitable pair of projection functions as in Definition \ref{def-amalgamation}.
Let $D$ and $E$ be the cocore disks of the $1$-handles in the representations of $\W_1$ and $\V_2$ respectively.
Then we can see that $(D,E)$ is a weak reducing pair of $(\V,\W;F)$.
Moreover, if we observe the amalgamation $F$, then we can see the follows.
\begin{enumerate}
\item If both $\partial_-\V_2$ and $\partial_-\W_1$ are connected (so $t$ consists of a torus), then $F$ is the one obtained from $t$ by attaching two tubes corresponding to the $1$-handles of $D$ and $E$ to $t$.
\item If $\partial_-\V_2$ is disconnected and $\partial_-\W_1$ is connected (so $t$ consists of a torus), then $F$ is the one obtained from the union of $t$ and a torus $t'$ parallel to $\partial_-\W$ by attaching the tube corresponding to the $1$-handle of $D$ to $t$ and connecting $t$ and $t'$ by the tube corresponding to the $1$-handle of $E$ (see (b) of Figure \ref{fig-wr-amal-2}).\label{3333b}
\item If $\partial_-\V_2$ is connected and $\partial_-\W_1$ is disconnected (so $t$ consists of a torus), then we get the symmetric result of (\ref{3333b}).
\item If both $\partial_-\V_2$ and $\partial_-\W_1$ are disconnected and $\partial_-\V_2\cap\partial_-\W_1$ is connected (so $t$ consists of a torus), then $F$ is the one obtained from the union of $t$, a torus $t'$ parallel to $\partial_-\W$ and a torus $t''$ parallel to $\partial_-\V$ by connecting $t$ and $t'$ by the tube corresponding to the $1$-handle of $E$ and connecting $t$ and $t''$ by the tube corresponding to the $1$-handle of $D$.
\item If both $\partial_-\V_2$ and $\partial_-\W_1$ are disconnected and $\partial_-\V_2\cap\partial_-\W_1$ is disconnected, i.e. $\partial_-\V_2=\partial_-\W_1$ (so $t$ consists of two tori $t_1$ and $t_2$ ), then $F$ is the one obtained from $t$ attaching two tubes corresponding to the $1$-handles of $D$ and $E$ where each tube connects $t_1$ and $t_2$.
\end{enumerate}
In all cases, we can see that the genus of $F$ is three.
Here, we confirm that $(\V,\W;F)$ is unstabilized by the assumption that the minimal genus of $M$ is three.

By using the above observation, if we compress $F$ along $D$ or $E$ and consider the genus two component, then it is isotopic to $T_1$ or $T_2$ respectively and the union of components of $F_{DE}$ having scars of both $D$ and $E$ is isotopic to $t$ (see (c) and (a) of Figure \ref{fig-wr-amal-2} for type (b)-$\V$ GHS and we can draw similar figures for the other types of GHSs).
\begin{figure}
\includegraphics[width=12cm]{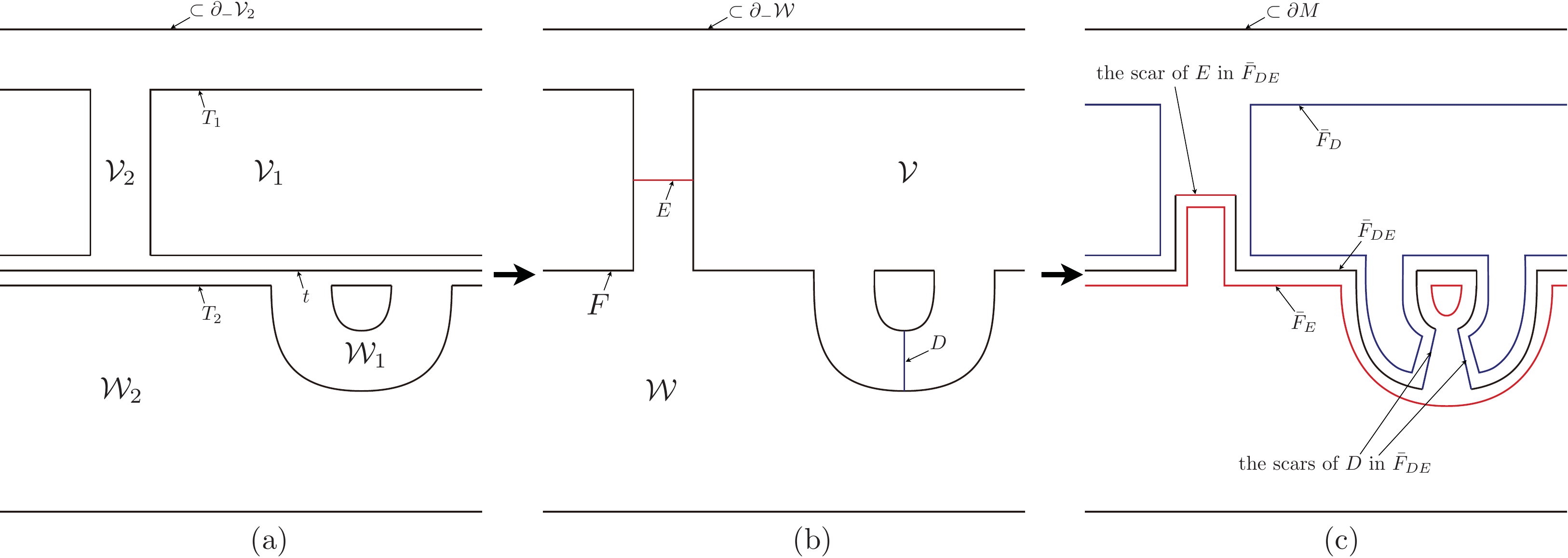}
\caption{(a)$\to$(b) : amalgamation (b)$\to$(c) : weak reduction\label{fig-wr-amal-2}}
\end{figure}
That is, the generalized Heegaard splitting $\{\bar{F}_D,\bar{F}_{DE},\bar{F}_E\}$ obtained by weak reduction from $(\V,\W;F)$ along the weak reducing pair $(D,E)$ is isotopic to $\mathbf{H}$ (refer to the last statement of Lemma \ref{lemma-four-GHSs}).
This means that the isotopy class $[\mathbf{H}]$ belongs to $\mathcal{GHS}_F$, i.e. $\widetilde{\mathcal{GHS}}\subset\mathcal{GHS}$.

This completes the proof.
\end{proof}

\begin{definition}
Let $\mathbf{H}=(\V_1,\V_2;T_1)\cup_t(\W_1,\W_2;T_2)$ ($\partial_-\V_2\cap \partial_-\W_1=t$) be a generalized Heegaard splitting whose isotopy class belongs to $\widetilde{\mathcal{GHS}}$ and $(\V,\W;F)$ be the Heegaard splitting obtained by amalgamation from $\mathbf{H}$ along $t$ with respect to suitable $1$-handle structures of $\V_2$ and $\W_1$ and a pair of projection functions $p_{N_0}$ and $p_{L_0}$ defined on $t\times I$s by using the notations in Definition \ref{def-amalgamation}.

If $\mathbf{H}$ is isotopic to a generalized Heegaard splitting $\mathbf{H}'$ by an isotopy $h_s$ such that $h_0(\mathbf{H})=\operatorname{id}(\mathbf{H})=\mathbf{H}$ and $h_1(\mathbf{H})=\mathbf{H}'$, then $h_s$ gives a $1$-parameter family of generalized Heegaard splittings $\{\mathbf{H}_s\}_{0\leq s \leq 1}$ for $\mathbf{H}_s=h_s(\mathbf{H})=(\V_1^s,\V_2^s;T_1^s)\cup_{t^s}(\W_1^s,\W_2^s;T_2^s)$.
If we consider the images of the $1$-handles of $\V_2$ and $\W_1$ in the relevant $1$-handle structures and the product structures of $t\times I$s determined by the pair $(p_{N_0},p_{L_0})$ of $h_s$, then
there would be the corresponding $1$-handles of $\V_2^s$ and $\W_1^s$ and the pair of projection functions $p^s_{N_0}$ and $p^s_{L_0}$ defined on $t^s\times I$s (see Figure \ref{fig-introduction-2}).
\begin{figure}
\includegraphics[width=10cm]{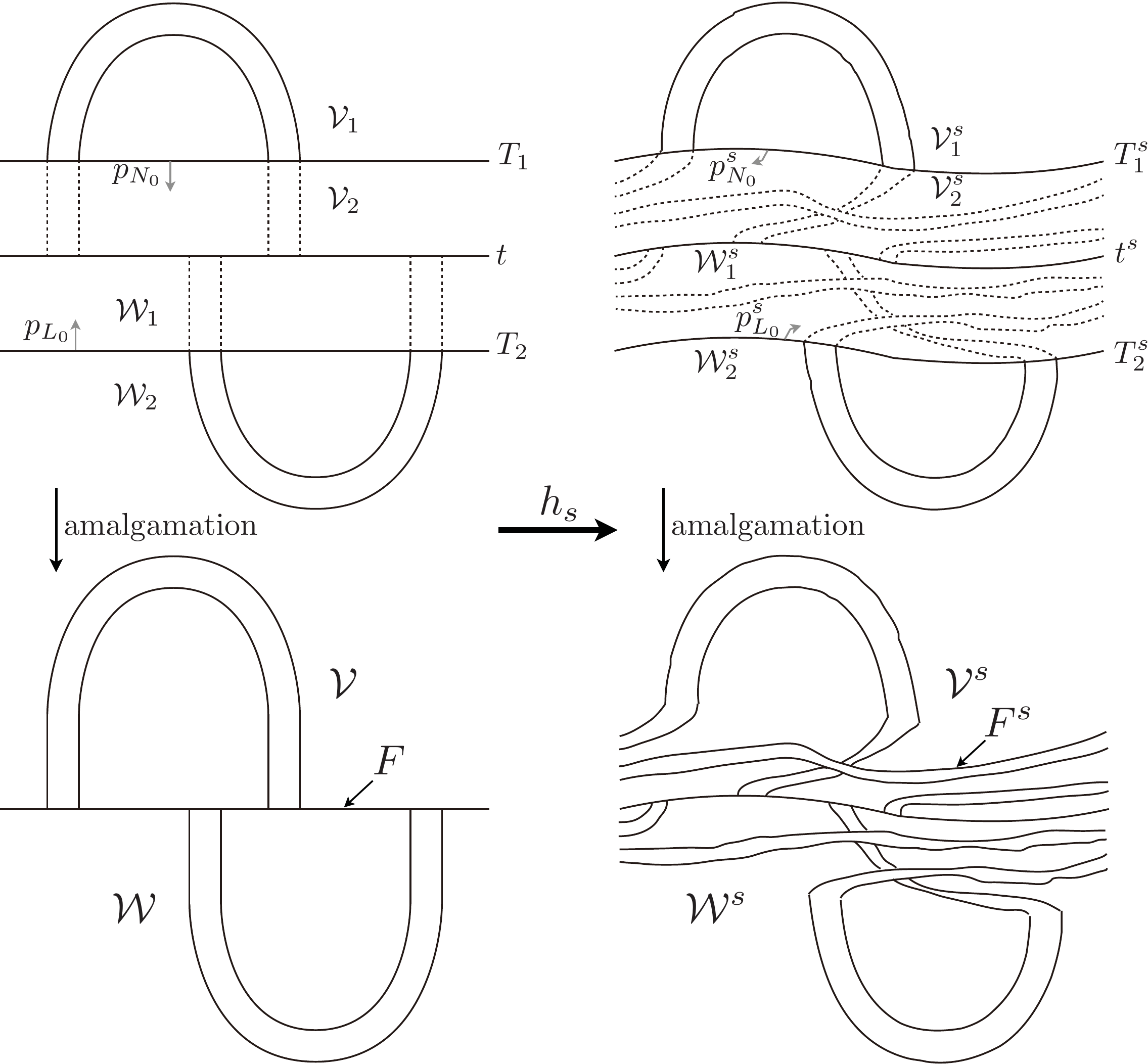}
\caption{$(\V_1^s,\V_2^s;T_1^s)\cup_{t^s}(\W_1^s,\W_2^s;T_2^s)$ and $F^s$\label{fig-introduction-2}}
\end{figure}
Hence, we get the $1$-parameter family of amalgamations $\{(\V_s,\W_s;F^s)\}$ by using these images, where each $(\V_s,\W_s;F^s)$ is obtained from $\mathbf{H}_s$, and we can see that $F^s=h_s(F)$, i.e. it gives an isotopy from $F=F^0$ to $F^1$.
Here, we can see that $F^1$ is obtained by amalgamation from $\mathbf{H}'=\mathbf{H}_1$.
This means that the isotopy class of the amalgamation obtained from $\mathbf{H}$ and that obtained from $\mathbf{H}'$ guaranteed  by Proposition \ref{lemma-Lackenby} are the same, i.e. an isotopy class $[\mathbf{H}]\in\widetilde{\mathcal{GHS}}$ gives a unique isotopy class $[F]$ of amalgamation.

Let  $\widetilde{\mathcal{GHS}}_{[F]}$ be the maximal subset of $\widetilde{\mathcal{GHS}}$ such that every element of $\widetilde{\mathcal{GHS}}_{[F]}$  gives the same isotopy class $[F]$ of amalgamation.
\end{definition}

\begin{definition}\label{def-induce}
Let $f$ be an orientation-preserving automorphism of an irreducible $3$-manifold $M$ that takes a weakly reducible, unstabilized Heegaard surface  $F_1$ of genus three into $F_2$, and $(\V_1,\W_1;F_1)$ and $(\V_2,\W_2;F_2)$ the relevant Heegaard splittings.
Since we can represent a compressing disk in $\V_1$ or $\W_1$ as the boundary curve in $F_1$ and $f$ is a homeomorphism, $f$ would translate the information of the compressing disks of $F_1$ into that of $F_2$.
Let $D_1$ and $D_2$ be compressing disks of $\V_1$.
If $[D_1]=[D_2]$ in $\D(F_1)$, then there is an isotopy $h_t$ defined on $M$ such that (i) $h_0$ is the identity, (ii) $h_1(D_1)=D_2$, and (iii) $h_t(F_1)=F_1$ for $0\leq t \leq 1$.
Without loss of generality, assume that $f(\V_1)= \V_2$ and let us consider the images $f(D_1)$ and $f(D_2)$ in $\V_2$.
Then $f\circ h_t \circ f^{-1}$ is an isotopy sending $f(D_1)$ into $f(D_2)$ and we can see that $f\circ h_t\circ  f^{-1}(F_2)=F_2$ for $0\leq t \leq 1$, i.e. $[f(D_1)]=[f(D_2)]$ in $\D(F_2)$.
Hence, we can well-define the map $f_\ast: \D(F_1)\to\D(F_2)$ by $f_\ast([D])=[f(D)]$.
Moreover, we can induce the follows easily.
\begin{enumerate}
\item $f_\ast$ induces a bijection between the set of vertices of $\D(F_1)$ and that of $\D(F_2)$.
\item $f_\ast$ sends each $k$-simplex in $\D(F_1)$ into the corresponding $k$-simplex in $\D(F_2)$ for $k\geq 0$.
\item $f_\ast$ sends each $k$-simplex in $D_{\V_1 \W_1}(F_1)$ into the corresponding $k$-simplex in $\D_{\V_2 \W_2}(F_2)$ for $k\geq 0$.
\item $f_\ast$ sends each component of $\D_{\V_1 \W_1}(F_1)$ into the corresponding component of $\D_{\V_2 \W_2}(F_2)$  (refer to Lemma \ref{lemma-just-BB}).
\end{enumerate}
Moreover, if $g$ is an orientation-preserving automorphism of $M$ that takes the Heegaard surface $F_2$ into $F_3$, then we can see $(g\circ f)_\ast([D])=[g\circ f(D)]=g_\ast([f(D)])=g_\ast(f_\ast([D]))$, i.e. $(g\circ f)_\ast=g_\ast\circ f_\ast$.
In addition, if we define $(f_\ast)^{-1}$ as $f_\ast^{-1}$ where $f_\ast^{-1}$ is the induced map coming from $f^{-1}$, then we get $(f_\ast)^{-1}\circ f_\ast =(\operatorname{id}_{F_1})_\ast$ and $f_\ast\circ (f_\ast)^{-1} =(\operatorname{id}_{F_2})_\ast$.
\end{definition}

Finally, we reach Theorem \ref{theorem-main-copy-2}.

\begin{theorem}[Theorem \ref{theorem-main-2}]\label{theorem-main-copy-2}
Let $M$ be an orientable, irreducible $3$-manifold having a weakly reducible,  genus three Heegaard splitting as a minimal genus Heegaard splitting.

Suppose that there is a correspondence between (possibly duplicated) two isotopy classes of $\widetilde{\mathcal{GHS}}$ by some elements of $Mod(M)$, say $[\mathbf{H}]\in \widetilde{\mathcal{GHS}}_{[F]}\rightarrow [\mathbf{H}']\in \widetilde{\mathcal{GHS}}_{[F']}$.
If $[f]$, $[g]\in Mod(M)$ give the same correspondence,  then there exists a representative $h$ of the difference $[h]=[g]\cdot[f]^{-1}$ satisfying the follows.

For a suitably chosen representative $F'\in[F']$,
\begin{enumerate}
\item $h$ takes $F'$ into itself and\label{thm123-1}
\item $h$ sends a uniquely determined weak reducing pair $(V',W')$ of $F'$ into itself up to isotopy (i.e. $h(V')$ is isotopic to $V'$ or $W'$ in the relevant compression body and $h(W')$ is isotopic to the other in the relevant compression body),
where $(V',W')$ is determined naturally when we obtain $F'$ by amalgamation from a representative $\mathbf{H}'$ of $[\mathbf{H}']$.\label{thm123-2} 
\end{enumerate}
Moreover, for any orientation-preserving automorphism $\tilde{h}$ of $M$ satisfying (\ref{thm123-1}) and (\ref{thm123-2}), there exist two elements in $Mod(M)$ giving the correspondence $[\mathbf{H}]\to[\mathbf{H}']$ such that $\tilde{h}$ belongs to the isotopy class corresponding to the difference between them.
\end{theorem}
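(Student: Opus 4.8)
The plan is to prove the two halves of the theorem separately, reducing the existence of a good representative $h$ of $[h]=[g]\cdot[f]^{-1}$ to the structural results of Sections 2--3 and the ``moreover'' part to Claim A in the proof of Corollary \ref{corollary-GHS-HS}. First I would fix a representative $\mathbf H'\in[\mathbf H']$ and, via Definition \ref{def-amalgamation} and the proof of Lemma \ref{lemma-GHS-GHSS} (this is where the hypothesis that the minimal genus of $M$ is three enters), build the amalgamation $(\V',\W';F')$ --- a weakly reducible, unstabilized, genus three Heegaard splitting --- together with its natural weak reducing pair $(V',W')$, i.e.\ the cocores of the $1$-handles of $\W_1'$ and $\V_2'$; this $F'$ is the ``suitably chosen representative''. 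Since $\W_1'$ and $\V_2'$ are genus two compression bodies with non-empty minus boundary, each carries exactly one $1$-handle (Lemma \ref{lemma-defining}), so cutting $\V'$ along $V'$ merely undoes that $1$-handle and leaves a connected compression body; hence $V'$ is non-separating in $\V'$ and, symmetrically, $W'$ in $\W'$, so by Lemma \ref{lemma-character-BB} the pair $(V',W')$ is the center of the component $\mathcal B'$ of $\DVPWP(F')$ containing it, and by Lemma \ref{lemma-GHS-GHSS} the weak reduction $\mathbf G'$ of $(\V',\W';F')$ along $(V',W')$ satisfies $[\mathbf G']=[\mathbf H']$. I would perform the identical construction on the source side: the amalgamation of a fixed $\mathbf H\in[\mathbf H]$ is $(\V_0,\W_0;F_0)$ with natural weak reducing pair $(V_0,W_0)$, again the center of its component $\mathcal B_0$, with weak reduction $\mathbf G_0$ and $[\mathbf G_0]=[\mathbf H]$.

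For the first half, since $[f]$ gives $[\mathbf H]\to[\mathbf H']$ we have $f(\mathbf G_0)\simeq f(\mathbf H)\simeq\mathbf H'\simeq\mathbf G'$; by the argument in Definition \ref{def-isotopy} I may isotope $f$ so that $f(\mathbf G_0)=\mathbf G'$, and then, as $(V_0,W_0)$ and $(V',W')$ are the centers of $\mathcal B_0$ and $\mathcal B'$, Theorem \ref{lemma-determine-GHSs} lets me isotope $f$ further so that $f(F_0)=F'$ while retaining $f(\mathbf G_0)=\mathbf G'$, where after relabelling the two sides of $\mathbf G_0$ if needed $f(\V_0)=\V'$, so that $f$ carries the thick level of $\mathbf G_0$ in $\V_0$ onto the thick level of $\mathbf G'$ in $\V'$. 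Then $f$ induces $f_\ast\colon\D(F_0)\to\D(F')$, and by Claim A in the proof of Corollary \ref{corollary-GHS-HS} the set of surfaces $\mathbf G'=f(\mathbf G_0)$ is also the weak reduction of $(\V',\W';F')$ along $(f(V_0),f(W_0))$ and carries precisely the same $\V'$-thick level; hence by Theorem \ref{theorem-structure} the pairs $(f(V_0),f(W_0))$ and $(V',W')$ lie in the same component, i.e.\ $f_\ast(\mathcal B_0)=\mathcal B'$, and since $f_\ast$ takes the unique center $(V_0,W_0)$ of $\mathcal B_0$ to the center of $\mathcal B'$ (Lemma \ref{lemma-center-center}, Theorem \ref{lemma-just-BB}) we obtain $f_\ast(V_0)=V'$ and $f_\ast(W_0)=W'$ in $\D(F')$. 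Applying the same argument to $g$ yields an isotoped $g$ with $g(F_0)=F'$ and $g_\ast\{V_0,W_0\}=\{V',W'\}$ (allowing $g$ to exchange the two sides of $F'$). Now $h:=g\circ f^{-1}$ represents $[h]=[g]\cdot[f]^{-1}$, satisfies $h(F')=g(f^{-1}(F'))=g(F_0)=F'$, and $h_\ast=g_\ast\circ(f_\ast)^{-1}$ carries $\{V',W'\}$ to $g_\ast\{V_0,W_0\}=\{V',W'\}$; thus $h$ takes $F'$ into itself and sends $(V',W')$ into itself up to isotopy, which is (\ref{thm123-1}) and (\ref{thm123-2}).

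For the ``moreover'' part, let $\tilde h$ be any orientation-preserving automorphism of $M$ with $\tilde h(F')=F'$ sending $(V',W')$ into itself up to isotopy. By the hypothesis of the theorem there is some $[f]\in Mod(M)$ realizing $[\mathbf H]\to[\mathbf H']$; I set $[g]:=[\tilde h]\cdot[f]\in Mod(M)$. By Claim A in the proof of Corollary \ref{corollary-GHS-HS} --- with a $\V'\leftrightarrow\W'$ relabelling if $\tilde h$ exchanges the two sides of $F'$ --- the set $\tilde h(\mathbf G')$ is the weak reduction of $(\V',\W';F')$ along $(\tilde h(V'),\tilde h(W'))$, and since $(\tilde h(V'),\tilde h(W'))$ is isotopic to $(V',W')$ and weak reduction is invariant under isotopy of the defining pair, this is isotopic to $\mathbf G'$; therefore $\tilde h(\mathbf H')\simeq\tilde h(\mathbf G')\simeq\mathbf G'\simeq\mathbf H'$, whence $[g]([\mathbf H])=[\tilde h]([f]([\mathbf H]))=[\tilde h]([\mathbf H'])=[\mathbf H']$. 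So $[f]$ and $[g]$ both give the correspondence $[\mathbf H]\to[\mathbf H']$, and $[g]\cdot[f]^{-1}=[\tilde h]\cdot[f]\cdot[f]^{-1}=[\tilde h]$, as required.

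The step I expect to be the main obstacle is pinning down a single vertex of $\D(F')$ in the first half, i.e.\ proving $f_\ast(V_0)=V'$ (and $g_\ast(V_0)=V'$) rather than merely $f_\ast(\mathcal B_0)=\mathcal B'$: this rests on the two facts set up in the first paragraph --- that the natural weak reducing pair produced by amalgamation is genuinely the center of its component (Lemma \ref{lemma-character-BB}), and that the isotopy of $f$ delivering $f(F_0)=F'$ can be taken to keep $f(\mathbf G_0)=\mathbf G'$ on the nose so that Theorem \ref{theorem-structure} may compare the $\V'$-thick-level embeddings --- together with careful bookkeeping of the detailed amalgamation data of Definition \ref{def-amalgamation} throughout.
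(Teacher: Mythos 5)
Your proposal is correct and follows essentially the same route as the paper's proof: the same amalgamation set-up with the cocore pair $(V',W')$ identified as the center of its component via Lemma \ref{lemma-character-BB}, the same use of Theorem \ref{lemma-determine-GHSs} to align $f$ and $g$ with $F'$, the same combination of Claim A of Corollary \ref{corollary-GHS-HS}, Theorem \ref{theorem-structure} and Lemma \ref{lemma-center-center} to pin down $f_\ast\{V_0,W_0\}=\{V',W'\}$, and the same construction $g'=\tilde h\circ f'$ for the converse. One small slip: the cocore disks of the amalgamating $1$-handles need \emph{not} be non-separating --- when $\partial_-\W_1'$ (resp.\ $\partial_-\V_2'$) is disconnected, cutting along the cocore splits off the $(\text{torus})\times I$ piece, so the disk is separating --- but since in that case it cuts off $(\text{torus})\times I$ rather than a solid torus, Lemma \ref{lemma-character-BB} still identifies $(V',W')$ as the center, which is exactly how the paper phrases this step.
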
 

\begin{proof}
Let $\mathbf{H}$ and $\mathbf{H}'$ be arbitrarily chosen representatives of $[\mathbf{H}]$ and $[\mathbf{H}']$ respectively.
Here, we can represent each compression body of $\mathbf{H}$ intersecting $\overline{\operatorname{Thin}}(\mathbf{H})$ as $\partial_-\times I\cup(\textit{a $1$-handle})$ and the symmetric argument also holds for $\mathbf{H}'$.
With respect to the $1$-handle structures of these compression bodies and suitable pairs of projection functions on $\overline{\operatorname{Thin}}(\mathbf{H})\times I$s and $\overline{\operatorname{Thin}}(\mathbf{H}')\times I$s, we get the weakly reducible, unstabilized Heegaard splittings $(\V,\W;F)$ and $(\V',\W';F')$ of genus three obtained by amalgamations from $\mathbf{H}$ and $\mathbf{H}'$ along $\overline{\operatorname{Thin}}(\mathbf{H})$ and $\overline{\operatorname{Thin}}(\mathbf{H}')$ respectively.
Recall that $\mathbf{H}$ and $\mathbf{H}'$ are just generalized Heegaard splittings such that each consists of two Heegaard splittings of genus two and we only know the isotopy classes of the amalgamations are well-defined by Proposition \ref{lemma-Lackenby}.

If we use the proof of Lemma \ref{lemma-GHS-GHSS}, then $\mathbf{H}$ and $\mathbf{H}'$ are isotopic to the generalized Heegaard splittings obtained by weak reductions from $(\V,\W;F)$ and $(\V',\W';F')$ respectively.
In other words, we can isotope $F$ and $F'$ so that $\mathbf{H}$ and $\mathbf{H}'$ would be the generalized Heegaard splittings obtained by weak reductions from $(\V,\W;F)$ and $(\V',\W';F')$ respectively.
Let us realize these isotopies.
Let $(V,W)$ and $(V',W')$ be the weak reducing pairs coming from the cocore disks of the relevant $1$-handles used when we obtained the amalgamations $(\V,\W;F)$ and $(\V',\W';F')$ respectively.
If we thin the $1$-handle parts of $F$ and push $F$ off slightly to miss the thick levels of $\mathbf{H}$ if we need, then we can see that $\mathbf{H}$ itself is a generalized Heegaard splitting obtained by weak reduction from $(\V,\W;F)$ along $(V,W)$. (Refer to the last statement of Lemma \ref{lemma-four-GHSs} and see Figure \ref{fig-wr-amal-3}. 
We can draw the similar figures for the other cases among the five cases of amalgamations in the proof of Lemma \ref{lemma-GHS-GHSS}).
\begin{figure}
\includegraphics[width=10cm]{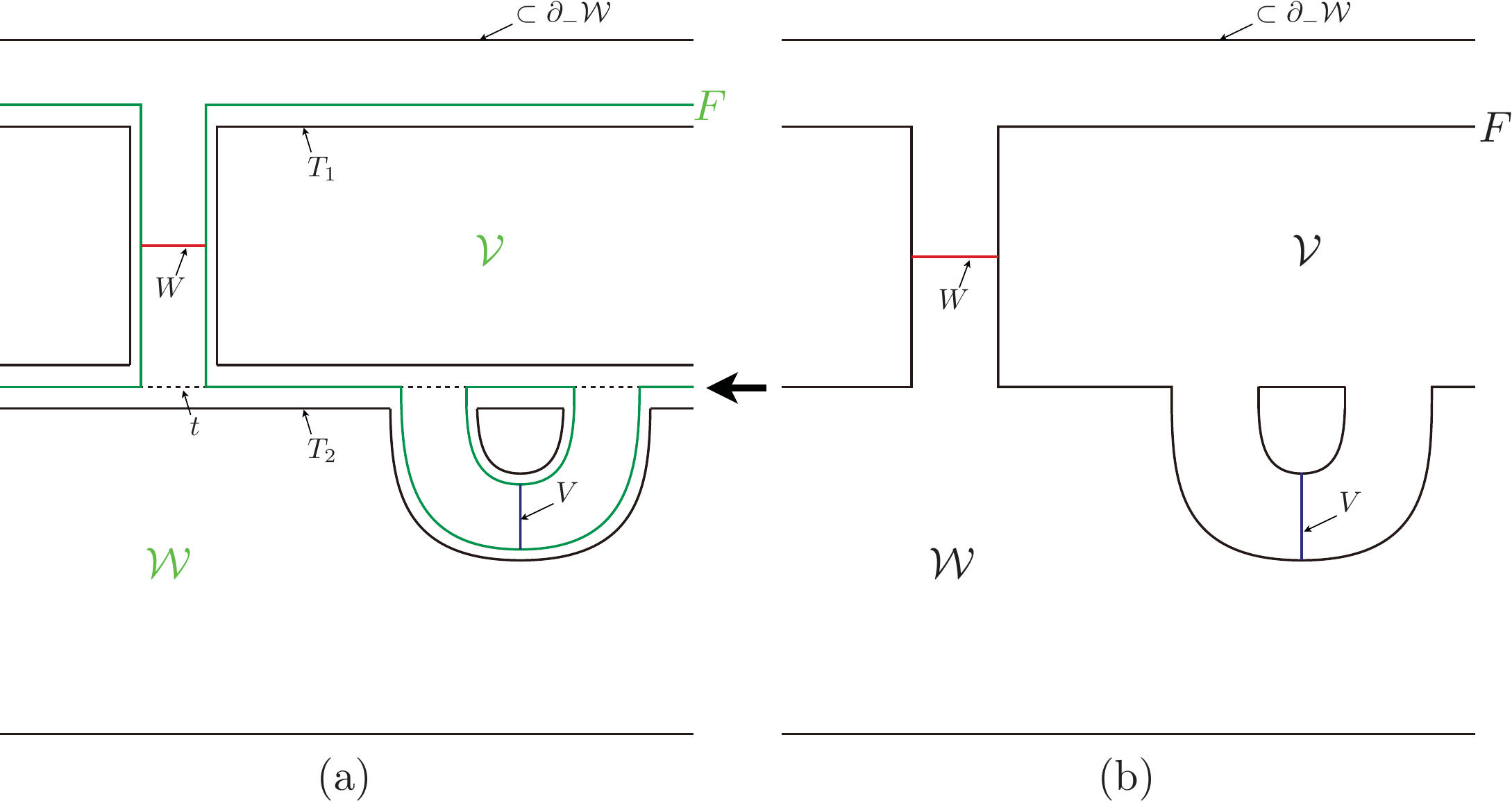}
\caption{(b)$\to$(a) :we can isotope $F$ so that $\mathbf{H}=\{T_1,t,T_2\}$ would be the GHS obtained by weak reductions from $(\V,\W;F)$.\label{fig-wr-amal-3}}
\end{figure}
The symmetric argument also holds for $F'$ and $\mathbf{H}'$ by using $(V',W')$.

After these isotopies of $F$ and $F'$, we can define the equivalent classes $(\mathbf{H})$ and $(\mathbf{H}')$ in $\overline{\mathcal{GHS}}_F$ and $\overline{\mathcal{GHS}}_{F'}$ respectively. 
From now on, we will use these embeddings of $F$ and $F'$.
Since $V$ and $W$ comes from the cocore disks of the $1$-handles, if any of them is separating in $\V$ or $\W$ after the amalgamation, then it cuts off $(\text{torus})\times I$ from $\V$ or $\W$ (recall the five cases of amalgamations in the proof of Lemma \ref{lemma-GHS-GHSS}).
This means that $(V,W)$ is the center of the component of $\DVW(F)$ which $(V,W)$ belongs to, say $\mathcal{B}$, by Lemma \ref{lemma-character-BB}.
Similarly, $(V',W')$ is the center of the component of $\DVPWP(F')$ which $(V',W')$ belongs to, say $\mathcal{B}'$.

By the assumption, $[f]([\mathbf{H}])=[\mathbf{H}']$ and $[g]([\mathbf{H}])=[\mathbf{H}']$ for $[f],[g]\in Mod(M)$.
Hence, there are representatives $f$ and $g$ of $[f]$ and $[g]$ respectively such that $f(\mathbf{H})=\mathbf{H}'$ and $g(\mathbf{H})=\mathbf{H}'$.
Therefore, we can isotope $f$ and $g$ so that (i) $f(F)=F'$ and $g(F)=F'$ and (ii) $f(\mathbf{H})=\mathbf{H}'$ and $g(\mathbf{H})=\mathbf{H}'$ by Theorem \ref{lemma-determine-GHSs}.

Recall that $\mathbf{H}$ and $\mathbf{H}'$ are generalized Heegaard splittings obtained by weak reductions from  $(\V,\W;F)$ and $(\V',\W';F')$ along the weak reducing pairs $(V,W)$ and $(V',W')$ respectively at this moment.
If we consider Claim A in Corollary \ref{corollary-GHS-HS}, then we can see $f^{-1}(\mathbf{H}')=\mathbf{H}$ is the generalized Heegaard splitting obtained by weak reduction from $(\V,\W;F)$ along the weak reducing pair determined by $\{f^{-1}(V'),f^{-1}(W')\}$, say $(\tilde{V},\tilde{W})$.
But Theorem \ref{theorem-structure} means that $(\tilde{V},\tilde{W})$ must belong to $\mathcal{B}$ since the embeddings of thick levels determined by $(\tilde{V},\tilde{W})$ are isotopic to those  determined by $(V,W)$ in the relevant compression bodies.
Indeed, $\{[\tilde{V}],[\tilde{W}]\}$ is $\{[V],[W]\}$ by Lemma \ref{lemma-center-center}.
That is, the induced map $f^{-1}_\ast:\D(F')\to \D(F)$ sends $\mathcal{B}'$ into $\mathcal{B}$ and $\{[V'],[W']\}$ into $\{[V],[W]\}$. 
Similarly,  if we consider $g(\mathbf{H})=\mathbf{H}'$, then the induced map $g_\ast:\D(F)\to \D(F')$ sends $\mathcal{B}$ into $\mathcal{B'}$ and $\{[V],[W]\}$ into $\{[V'],[W']\}$.

Let us consider the difference $[h]=[g]\cdot [f]^{-1}$.
Then $h=g\circ f^{-1}$ is a representative of $[h]$ such that $h(F')=F'$.
Moreover, the induced map $h_\ast=g_\ast\circ f^{-1}_\ast$ sends $\mathcal{B}'$ into $\mathcal{B}'$ and $\{[V'],[W']\}$ into $\{[V'],[W']\}$ itself by the previous observations.
This completes the proof of the existence of the representative $h$.
Since (i) the weak reducing pair $(V',W')$ is the center of $\mathcal{B}'$ which is unique in $\mathcal{B}'$ by definition and (ii) the component $\mathcal{B}'$ is uniquely determined by the equivalent class $(\mathbf{H}')\in\overline{\mathcal{GHS}}_{F'}$ as the preimage of the bijection $\Phi_{F'}$, the weak reducing pair $(V',W')$ is uniquely determined. 
This completes the proof of the first statement.

From now on, we will prove the last statement.

Consider an orientation-preserving automorphism $\tilde{h}$ of $M$ such that (i) $\tilde{h}$ takes $F'$ into itself and (ii) $\tilde{h}$ sends $(V',W')$ into itself up to isotopy.
This means that if we consider the embeddings of thick levels of the generalized Heegaard splitting $\tilde{\mathbf{H}}'$ obtained by weak reduction from $(\V',\W';F')$ along the weak reducing pair determined by $\{\tilde{h}(V'),\tilde{h}(W')\}$, then they are isotopic to those obtained by weak reduction from $(\V',\W';F')$ along $(V',W')$  in the relevant compression bodies, i.e. $(\tilde{\mathbf{H}}')=(\mathbf{H}')$ in $\overline{\mathcal{GHS}}_{F'}$.
Moreover, we can see that $(\tilde{h}(\mathbf{H}'))=(\tilde{\mathbf{H}}')$ in $\overline{\mathcal{GHS}}_{F'}$ by Claim A of Corollary \ref{corollary-GHS-HS}. i.e.  $(\tilde{h}(\mathbf{H}'))=(\mathbf{H}')$.
Therefore, we can isotope $\tilde{h}$ so that $\tilde{h}(\mathbf{H}')=\mathbf{H}'$.
Since there is at least one correspondence between $[\mathbf{H}]$ and $[\mathbf{H}']$ by an element $[f]\in Mod(M)$, choose a representative $f'$ of $[f]$ such that $f'(\mathbf{H})=\mathbf{H}'$.
Let $g'=\tilde{h}\circ f'$.
Then we can see that (i) $g'$ sends $\mathbf{H}$ into $\mathbf{H}'$ and (ii) $g'\circ f'^{-1}=\tilde{h}$.
Hence, $\tilde{h}$ is a representative of the difference $[g']\cdot[f']^{-1}$ between two elements $[g']$, $[f']\in Mod (M)$ giving the correspondence $[\mathbf{H}]\to[\mathbf{H}']$.
This completes the proof of the last statement.

This completes the proof.
\end{proof}

\section*{Acknowledgments}
This research was supported by BK21 PLUS SNU Mathematical Sciences Division.


\begin{thebibliography}{0}

\bibitem{Bachman2008} D. Bachman, Connected sums of unstabilized Heegaard splittings are unstabilized, 
{\it Geom. Topol}. {\bf 12} (2008), 2327--2378.

\bibitem{Johnson2011} J. Johnson, Automorphisms of the three-torus preserving a genus-three Heegaard splitting, {\it Pacific J. Math}. \textbf{253} (2011), 75--94. 

\bibitem{JohnsonRubinstein2013} J. Johnson and H. Rubinstein, Mapping class groups of Heegaard splittings, {\it J. Knot Theory Ramifications} \textbf{22} (2013), 1350018, 20 pp.

\bibitem{JungsooKim2013} J. Kim, On critical Heegaard splittings of tunnel number two composite knot exteriors, {\it J. Knot Theory Ramifications} \textbf{22} (2013), 1350065, 11 pp.

\bibitem{JungsooKim2012} J. Kim, On unstabilized genus three critical Heegaard surfaces, {\it Topology Appl}.  \textbf{165} (2014), 98--109.

\bibitem{JungsooKim2014} J. Kim, A topologically minimal, weakly reducible, unstabilized Heegaard splitting of genus three is critical, arXiv:1402.4253.

\bibitem{JungsooKim2014-2} J. Kim, On the disk complexes of weakly reducible, unstabilized Heegaard splittings of genus three I - the Structure Theorem, arXiv:1412.2228.

\bibitem{JungsooKim2015} J. Kim, On the disk complexes of weakly reducible, unstabilized Heegaard splittings of genus three II - the Dynamics of $Mod(M,F)$ on $\D(F)$, arXiv:1501.04798.

\bibitem{Lackenby2008} M. Lackenby, An algorithm to determine the Heegaard genus of simple $3$-manifolds with non-empty boundary, {\it Algebr. Geom. Topol}. {\bf 8} (2008), 911--934.

\bibitem{7} M. Lustig and Y. Moriah, Closed incompressible surfaces in complements of wide knots and links, 
{\it Topology Appl}. {\bf 92} (1999), 1--13.

\bibitem{8} D. McCullough, Virtually geometrically finite mapping class groups of $3$-manifolds, 
{\it J. Differential Geom}. {\bf 33} (1991) 1--65.

\bibitem{SaitoScharlemannSchultens2005} T. Saito, M. Scharlemann and J. Schultens, Lecture notes on generalized Heegaard splittings, arXiv:math/0504167v1.

\bibitem{ScharlemannThompson1993} M. Scharlemann and A. Thompson, Heegaard splittings of $(\text{surface})\times I$ are standard, {\it Math. Ann}. \textbf{295} (1993), 549--564.

\bibitem{ScharlemannThompson1994} M. Scharlemann and A. Thompson, Thin position for $3$-manifolds, \textit{AMS Contemp. Math.} {\bf 164} (1994), 231--238.

\bibitem{Schultens1993} J. Schultens, The classification of Heegaard splittings for (compact orientable surface)$\,\times\, S^1$, {\it Proc. London Math. Soc}. {\bf 67} (1993), 425--448. 

\bibitem{Waldhausen1968} F. Waldhausen, On irreducible $3$-manifolds which are sufficiently large, {\it Ann. of Math}. {\bf 87} (1968) 56--88. 

\end{thebibliography}
\end{document}